\patchcmd{\ALG@step}{\addtocounter{ALG@line}{1}}{\refstepcounter{ALG@line}}{}{}
\newcommand{\ALG@lineautorefname}{line}
\newcommand{\R}{\mathbb{R}}
\newcommand{\C}{\mathbb{C}}
\DeclareMathOperator{\diag}{diag}
\DeclareMathOperator{\trace}{trace}
\newcommand*\conj[1]{\overline{#1}}
\algnewcommand{\algorithmicand}{\textbf{ and }}
\algnewcommand{\algorithmicor}{\textbf{ or }}
\algnewcommand{\OR}{\algorithmicor}
\algnewcommand{\AND}{\algorithmicand}
\algnewcommand{\algorithmicinput}{\textbf{Input: }}
\algnewcommand{\INPUT}{\Statex \algorithmicinput}
\algnewcommand{\algorithmicoutput}{\textbf{Output: }}
\algnewcommand{\OUTPUT}{\Statex \algorithmicoutput}
\algnewcommand{\algorithmiccalculation}{\textbf{Calculation: }}
\algnewcommand{\CALCULATION}{\item[\algorithmiccalculation]}
\algrenewcommand{\algorithmiccomment}[1]{
    \hspace*{\fill}
    // #1 //
    \hspace*{\fill}
}
\let\OldReturn\Return
\renewcommand{\Return}{\State\OldReturn}
\begin{document}

\title{Approximation of Hermitian Matrices by\\Positive Semidefinite Matrices using\\Modified Cholesky Decompositions}
\author{Joscha Reimer}
\institute{
Kiel University,\\
Department of Computer Science,\\
Algorithmic Optimal Control - CO$_2$ Uptake of the Ocean,\\
24098 Kiel, Germany\\
\email{joscha.reimer@email.uni-kiel.de}}
\date{May 4, 2019}
\titlerunning{Approximation of Hermitian Matrices by Positive Semidefinite Matrices}
\authorrunning{J. Reimer}
\journalname{Numerical Algorithms}
\maketitle

\keywords{
linear algebra,
matrix approximation algorithm,
modified Cholesky decomposition,
positive semidefinite matrix,
positive definite matrix,
Cholesky decomposition,
Hermitian matrix,
symmetric matrix
}

\begin{abstract}
    A new algorithm to approximate Hermitian matrices by positive semidefinite Hermitian matrices based on modified Cholesky decompositions is presented. In contrast to existing algorithms, this algorithm allows to specify bounds on the diagonal values of the approximation.
    
    It has no significant runtime and memory overhead compared to the computation of a classical Cholesky decomposition. Hence it is suitable for large matrices as well as sparse matrices since it preserves the sparsity pattern of the original matrix. 
        
    The algorithm tries to minimize the approximation error in the Frobenius norm as well as the condition number of the approximation. Since these two objectives often contradict each other, it is possible to weight these two objectives by parameters of the algorithm. In numerical experiments, the algorithm outperforms existing algorithms regarding these two objectives. 
        
    A Cholesky decomposition of the approximation is calculated as a byproduct. This is useful, for example, if a corresponding linear equation should be solved.
    
    A fully documented and extensively tested implementation is available. Numerical optimization and statistics are two fields of application in which the algorithm can be of particular interest.\\
    ~\\

\end{abstract}
\newpage

\section{Introduction} \label{sec: introduction}

Algorithms for approximating Hermitian matrices by positive semidefinite Hermitian matrices are useful in several areas. In stochastics they are needed to transform nonpositive semidefinite estimations of covariance and correlation matrices to valid estimations \cite{Rousseeuw1993,Qi2010,Higham2002a,Higham2016b}. In optimization they are needed to deal with nonpositive definite Hessian matrices in Newton type methods \cite{Gill1981,Nocedal2006,Chong2013}.

The existing algorithms have different disadvantages, which will be outlined below. A new algorithm without these disadvantages is presented in Section \ref{sec:LDL_approximation} where it is also examined in detail. An implementation is introduced in Section \ref{sec:implementation} together with numerical experiments and corresponding results. Conclusions are drawn in Section \ref{sec: summary}.

\subsection{Objectives of approximation algorithms} \label{subsec: introduction: objectives}
 
In order to evaluate the existing algorithms, objectives of an ideal approximation algorithm are established. For this, let $A \in \C^{n \times n}$ be an Hermitian matrix and $B \in \C^{n \times n}$ its approximation. The first three objectives are the following:

\begin{enumerate}[start=1,label={(\textbf{O\arabic*})},leftmargin=*]
    \item $B$ is positive semidefinite.
    \label{objective: positive semidefinite}
\end{enumerate}

\begin{enumerate}[start=2,label={(\textbf{O\arabic*})},leftmargin=*]
    \item The approximation error $\| B - A \|$ is small.
    \label{objective: small error}
\end{enumerate}

\begin{enumerate}[start=3,label={(\textbf{O\arabic*})},leftmargin=*]
    \item The condition number $\kappa(B) = \|B\| \, \|B^{-1}\|$ is small.
    \label{objective: well-conditioned}
\end{enumerate}

In addition to the approximation error, the condition number of the approximation is usually important as well, since, for example, often linear equations including the approximation have to be solved. 

The three objectives \ref{objective: positive semidefinite}, \ref{objective: small error} and \ref{objective: well-conditioned} are sometimes contradictory. Hence, an ideal algorithm would allow to prioritize between \ref{objective: small error} and \ref{objective: well-conditioned}. The norm used in \ref{objective: small error} and \ref{objective: well-conditioned} may depend on the actual application. Typical choices are the spectral norm or the Frobenius norm. 

Especially for large matrices, the execution time of the algorithm as well as the needed memory are important. The fastest way to test whether a matrix is positive definite is to try to calculate its Cholesky decomposition \cite{Higham1988}. This needs $\tfrac{1}{3} n^3 + \mathcal{O}(n^2)$ basic operations in the dense real valued case. The approximation algorithm cannot be expected to be faster but at least asymptotically as fast. Thus, the next two objectives are:

\begin{enumerate}[start=4,label={(\textbf{O\arabic*})},leftmargin=*]
    \item The algorithm requires at most $\mathcal{O}(n^2)$ more basic operations than the calculation of a Cholesky decomposition of $B$.
    \label{objective: fast computation}
\end{enumerate}

\begin{enumerate}[start=5,label={(\textbf{O\arabic*})},leftmargin=*]
    \item The algorithm needs to store $\mathcal{O}(n)$ numbers besides $A$ and $B$ and allows to overwrite $A$ with $B$.
    \label{objective: low memory}
\end{enumerate}

If $A$ is a sparse matrix, $B$ should have the same sparsity pattern. This allows an effective overwriting and is essential if the corresponding dense matrix would be to big to store. Thus, the next objective is:

\begin{enumerate}[start=6,label={(\textbf{O\arabic*})},leftmargin=*]
    \item $A_{i j} = 0$ implies $B_{i j} = 0$.
    \label{objective: preserve sparsity}
\end{enumerate}

For correlation matrices it is crucial that $B$ has only ones as diagonal values. This is the reason for the last objective:
\begin{enumerate}[start=7,label={(\textbf{O\arabic*})},leftmargin=*]
    \item The diagonal of $B$ can be predefined.
    \label{objective: diagonal values}
\end{enumerate}

Similar objectives to \ref{objective: positive semidefinite}, \ref{objective: small error}, \ref{objective: well-conditioned} and \ref{objective: fast computation} have been used in \cite{Schnabel1990,Schnabel1999,Cheng1998,Fang2008}. Here, another objective has been established: If $A$ is "sufficiently" positive definite, $B$ should be equal to $A$. This objective is not explicitly listed here and should be covered by \ref{objective: small error}.

\subsection{Existing approximation methods} \label{subsec: introduction: existing algorithms}

An overview of existing methods to approximate Hermitian matrices by positive semidefinite Hermitian matrices is provided next. They are evaluated using the objectives mentioned above.

The minimal approximation error can be achieved by computing an eigendecomposition and replacing negative eigenvalues \cite{Higham1988,Higham1989}. This was done in statistics \cite{Iman1982,Rousseeuw1993} as well as in optimization \cite[Chapter 3.4]{Nocedal2006}, \cite[Chapter 4.4.2.1]{Gill1981}. However, this does not meet \ref{objective: fast computation}, \ref{objective: preserve sparsity} and \ref{objective: diagonal values}.

It is also possible to calculate approximations with minimal approximation error and the restriction that all diagonal values are one \cite{Higham2002a,Borsdorf2010,Borsdorf2010a,Higham2016}. These methods could be extended so that the approximation has arbitrary predefined (nonnegative) diagonal values. Nevertheless, these methods do not meet \ref{objective: fast computation} and \ref{objective: preserve sparsity}.

Another method, especially common in optimization, is to add a predefined positive definite matrix multiplied by a sufficiently large scalar to the original matrix. The predefined matrix is usually the identity matrix or a diagonal matrix. The scalar is usually determined by increasing a value until the resulting approximation can be successfully Cholesky factorized. This method is also used in a modified Newton's method \cite{Goldfeld1966,Chong2013,Nocedal2006} and the Levenberg-Marquardt method \cite{Levenberg1944,Marquardt1963,Chong2013}. However, \ref{objective: fast computation} and \ref{objective: diagonal values} are not met.

A well-known method, in statistics, is a convex combination with a predefined positive definite matrix. In this context it is based on the concept of shrinkage estimator \cite{Stein1956,Devlin1975,Rousseeuw1993}. The positive definite matrix is again usually the identity matrix or a diagonal matrix. Only the convex combination factor has to be determined. This is usually done by examining the underlying statistical problem \cite{Chen2010,Fisher2011,Ikeda2016,Ledoit2003,Ledoit2004,Schaefer2005,Touloumis2015}. However, methods without using any statistical assumptions exist as well \cite{Higham2016b}. None of these meet \ref{objective: fast computation} and \ref{objective: diagonal values}.

Other methods used, especially in optimization, are modified Cholesky algorithms \cite{Gill1974,Gill1981,Schnabel1990,Schnabel1999,More1979,Cheng1998,Fang2008}. These compute a variant of a Cholesky decomposition like a $LDL^T$, a $LBL^T$ or a $LTL^T$ decomposition. Here $L$ is a lower triangular matrix, $D$ is a diagonal matrix, $B$ is a block diagonal matrix with block size smaller of one or two and $T$ is a tridiagonal matrix. During or after the calculation of these decompositions, their factors are modified so that they represent a positive definite matrix. The methods based on $LBL^T$ decomposition \cite{More1979,Cheng1998} do not meet \ref{objective: fast computation}, \ref{objective: preserve sparsity} and \ref{objective: diagonal values}, the ones based on $LTL^T$ decomposition \cite{Fang2008} do not meet \ref{objective: preserve sparsity} and \ref{objective: diagonal values} and the ones based on $LDL^T$ decomposition \cite{Gill1974,Gill1981,Schnabel1990,Schnabel1999,Fang2008} do not meet \ref{objective: diagonal values}.

Hence, none of the existing methods meet all objectives. However, methods that do not meet \ref{objective: diagonal values} can be extended to meet this objective. For that the calculated approximation is multiplied by a suitable chosen diagonal matrix from both sides. This does not affect \ref{objective: positive semidefinite}, \ref{objective: fast computation}, \ref{objective: low memory} and \ref{objective: preserve sparsity}. So the modified Cholesky method based on $LDL^T$ decomposition could meet all objectives if they are extended to meet \ref{objective: diagonal values}.

The new method presented in Section \ref{sec:LDL_approximation} is a modified Cholesky method based on $LDL^T$ decomposition as well. Contrary to the already published methods of this kind, this methods modifies not only the matrix $D$ but also the matrix $L$ during their calculation. In this way, the algorithm meets all objectives. Furthermore it better meets \ref{objective: small error} and \ref{objective: well-conditioned} than the other extended methods based on $LDL^T$ decomposition as shown in Section \ref{sec:implementation} by numerical experiments.

 \section{The approximation algorithm}
\label{sec:LDL_approximation}

The algorithm \nameref{alg: approximated matrix} which approximates Hermitian matrices by positive semidefinite matrices Hermitian is presented and analyzed in this section.

\subsection{The \nameref{alg: approximated matrix} and the \nameref{alg: approximated decomposition} algorithm }

Previous modified Cholesky methods based on $LDL^T$ decomposition \cite{Gill1974,Gill1981,Schnabel1990,Schnabel1999,Fang2008} applied to a symmetric matrix $A$ try to calculate its $LDL^T$ decomposition. While doing so, they increase some of the values in the diagonal matrix $D$. Hence, they result in a decomposition of a positive definite matrix $A + \Delta$, where $\Delta$ is a diagonal matrix with values greater or equal to zero. However, is this way, the approximation $A + \Delta$ cannot have predefined diagonal elements.

The key idea of the new algorithm is to modify the off-diagonal values of $A$ instead or in addition to its diagonal values. In detail, the Hermitian positive definite approximation $B \in \C^{n \times n}$ of an Hermitian $A\in \C^{n \times n}$ is defined as
\begin{equation*}
    B_{i j}
    :=
    \hat{\omega}_{i j} A_{i j}
    \text{ if }
    i \neq j
    \text{ and }
    B_{i i}
    :=
    A_{i i} + \delta_i
\end{equation*}
where $\hat{\omega}_{i j} \in [0, 1]$, $\hat{\omega}_{i j} = \hat{\omega}_{j i}$ and $\delta_i \in \R$ for all $i, j \in \{1, \ldots, n\}$.

If, for example, $\hat{\omega}_{i j} = 0$ and $\delta_i > |A_{ii}|$ for all $i, j \in \{1, \ldots, n\}$, then $B$ is a diagonal matrix with only positive values and thus positive definite. If, on the other hand, $\hat{\omega}_{i j} = 1$ and $\delta_i = 0$ for all $i, j \in \{1, \ldots, n\}$, then $B = A$ and there is no approximation error.

The challenge is now to determine the values $\hat{\omega}_{i j}$ and $\delta_i$ such that the objectives established in Subsection \ref{subsec: introduction: objectives} are met. This is where we use a (complex valued) modified Cholesky method based on $LDL^H$ decomposition. During the calculation of a $LDL^H$ decomposition of $A$, we modify $L$ and $D$ if the matrix represented by the decomposition would become not positive definite, its condition number would become to high or the requirements on the diagonal values would be violated otherwise.

In detail, the off-diagonal values in the $i$-th row of $L$ are multiplied by $\omega_i \in [0, 1]$ and $\delta_i \in \R$ is added to the $i$-th diagonal value of $D$. This $\delta_i$ corresponds to the previously mentioned $\delta_i$ and $\omega$ to $\hat{\omega}$ such that $\hat{\omega}_{i, j} = \omega_{\max\{{i,j}\}}$ for all $i, j \in \{1, \ldots, n\}$. This relationship is discussed in Subsection \ref{subsec: algorithms: representation}. Furthermore symmetric permutation techniques are used to reduce the approximation error, the computational effort and the required memory.

The algorithm \nameref{alg: approximated decomposition}, which computes the permuted modified $LDL^H$ decomposition and the values $\omega$ and $\delta$, is described in detail in Algorithm \ref{alg: approximated decomposition}.
\begin{algorithm}
\normalsize
\caption{\mbox{DECOMPOSITION}} 
\label{alg: approximated decomposition}

\begin{algorithmic}[1]        
    \INPUT
    \begin{itemize}[label={$\cdot$}]
        \item $A \in \C^{n \times n}$ Hermitian, $x \in (\R \cup \{-\infty\})^n$, $y \in (\R \cup \{\infty\})^n$, $l \in \R \cup \{-\infty\}$, $u \in \R \cup \{\infty\}$,  $\epsilon > 0$
        \item with $\max\{x_i, l\} \leq \min\{y_i, u\}$ for all $i \in \{1, \ldots, n\}$
        \item with $|x_i|, |l| \geq \epsilon$ or $|y_i|, |u| \geq \epsilon$ for all $i \in \{1, \ldots, n\}$
    \end{itemize}

    \OUTPUT
    \begin{itemize}[label={$\cdot$}]
        \item $L \in \C^{n \times n}$, $d, \omega, \delta \in \R^n$, $p \in \{1, \ldots, n\}^n$
\end{itemize}

\Function {decomposition}{$A, x, y, l, u, \epsilon$}

        \State $p_i \gets i$ for all $i \in \{1, \ldots, n\}$ \label{line: alg: approximated decomposition: init p}
        \State $\alpha_i \gets 0$ for all $i \in \{1, \ldots, n\}$ \label{line: alg: approximated decomposition: init alpha}
                    
        \For {$i \gets 1, \ldots, n$} \label{line: alg: approximated decomposition: main for loop}
            
            \State select $j \in \{i, \ldots, n\}$ \label{line: alg: approximated decomposition: permutation}
            \State swap $p_i$ with $p_j$ and $L_{i k}$ with $L_{j k}$ for all $k \in \{1, \ldots, i-1\}$ \label{line: alg: approximated decomposition: swap}    
                            
            \State select $d_i \in [l, u]$, $\omega_{p_i} \in [0,1]$ with $|d_i| \notin (0, \epsilon)$, $d_i + \alpha_{p_i} \omega_{p_i}^2 \in [x_{p_i}, y_{p_i}]$\label{line: alg: approximated decomposition: select d and omega}
            
            \State $L_{i j} \gets \omega_{p_i} L_{i j}$ for all $j \in \{1, \ldots, i-1\}$\label{line: alg: approximated decomposition: L modification omega}
            
            \State $\delta_{p_i} \gets d_i + \omega_{p_i}^2 \alpha_{p_i} - A_{p_i p_i}$ \label{line: alg: approximated decomposition: delta definition}
            
            \For {$j \gets i+1, \ldots, n$}
                \If {$d_i \neq 0$} \label{line: alg: approximated decomposition: if d not zero}
                    \State $L_{j i} \gets \left( A_{p_j p_i} - \sum\limits_{k=1}^{i-1} L_{j k} \conj{L}_{i k} d_k \right) (d_i)^{-1}$ \label{line: alg: approximated decomposition: L definition}
                    \State $\alpha_{p_j} \gets \alpha_{p_j} +  |L_{j i}|^2 d_{i}$ \label{line: alg: approximated decomposition: add alpha}
                \Else
                    \State $L_{j i} \gets 0$ \label{line: alg: approximated decomposition: L set zero}
                \EndIf
            \EndFor
        
        \EndFor
        
        \State $L_{i i} \gets 1$ and $L_{i j} \gets 0$ for all $i, j \in \{1, \ldots, n\}$ with $j > i$ \label{line: alg: approximated decomposition: L set diagonal and upper triangle}
        
        \Return $(L, d, p, \omega, \delta)$
    
    \EndFunction
    
\end{algorithmic}
\end{algorithm}

The algorithm \nameref{alg: approximated matrix}, which computes the approximation $B$, is described in detail in Algorithm \ref{alg: approximated matrix}.

\begin{algorithm}[H]
\normalsize
\caption{\mbox{MATRIX}} 
\label{alg: approximated matrix}

\begin{algorithmic}[1]        
    \INPUT
    \begin{itemize}[label={$\cdot$}]
        \item $A \in \C^{n \times n}$ Hermitian, $x \in (\R \cup \{-\infty\})^n$, $y \in (\R \cup \{\infty\})^n$, $l \in \R \cup \{-\infty\}$, $u \in \R \cup \{\infty\}$,  $\epsilon > 0$
        \item with $\max\{x_i, l\} \leq \min\{y_i, u\}$ for all $i \in \{1, \ldots, n\}$
        \item with $|x_i|, |l| \geq \epsilon$ or $|y_i|, |u| \geq \epsilon$ for all $i \in \{1, \ldots, n\}$
    \end{itemize}
    
    \OUTPUT
    \begin{itemize}[label={$\cdot$}]
        \item $B \in \C^{n \times n}$
    \end{itemize}
    
\Function {matrix}{$A, x, y, l, u, \epsilon$} 
        \State $(L, d, p, \omega, \delta) \gets$ \Call{DECOMPOSITION}{$A, x, y, l, u, \epsilon$}
    
        \State $q_{p_i} \gets i$ for all $i \in \{1, \ldots, n\}$ \label{line: alg: approximated matrix: definition q}
    
        \For {$i \gets 1, \ldots, n$}  \label{line: alg: approximated matrix: outer for loop}
            \State $B_{i i} \gets A_{i i} + \delta_i$ \label{line: alg: approximated matrix: set diagonal}
            \For {$j \gets i+1, \ldots, n$} \label{line: alg: approximated matrix: inner for loop}
                \If {$q_i > q_j$}
                    \State $a \gets j$, $b \gets i$ \label{line: alg: approximated matrix: set a and b case 1}
                \Else
                    \State $a \gets i$, $b \gets j$ \label{line: alg: approximated matrix: set a and b case 2}
                \EndIf
            
                \If {$d_{q_a} \neq 0$ \OR $\omega_b = 0$}
                    \State $B_{i j} \gets A_{i j} \omega_b$ \label{line: alg: approximated matrix: set below diagonal with omega}
                \Else
                    \State $B_{i j} \gets \sum\limits_{k = 1}^{q_a - 1} L_{q_i k} d_k \overline{L}_{q_j k}$ \label{line: alg: approximated matrix: set below diagonal without omega}
                \EndIf
            \EndFor
        \EndFor
        
        \State $B_{j i} \gets \overline{B}_{i j}$ for all $i, j \in \{1, \ldots, n\}$ with $j > i$ \label{line: alg: approximated matrix: set obove diagonal}
    
        \Return $B$
    
    \EndFunction
    
\end{algorithmic}
\end{algorithm}

The parameters $l$ and $u$ of the algorithms are lower and upper bounds on the diagonal values of $D$. The positive definiteness of $B$ can be controlled by $l$ as pointed out in Subsection \ref{subsec: algorithms: positive semidefinite}. The parameters $x$ and $y$ are lower and upper bounds on the diagonal values of $B$ as shown in Subsection \ref{subsec: algorithms: diagonal values}. The condition number of $B$ and the approximation error $\|B- A\|$ are influenced by $x, y, l, u$ as demonstrated in Subsection \ref{subsec: algorithms: condition number} and \ref{subsec: algorithms: approximation error}, respectively. Moreover, they allow to prioritize a low approximation error or a low condition number. The numerical stability of the algorithms is controlled by $\epsilon$.

The algorithms can be considered as a whole class of algorithms since there are many possibilities to choose the permutation and $\omega$ and $\delta$ as discussed in Subsection \ref{subsec: algorithms: choice of d and omega} and \ref{subsec: algorithms: permutation}. The algorithm is carefully designed, so that the overhead in computational effort and memory consumption compared to classical Cholesky decomposition algorithms is negligibly if $\omega$ and $\delta$ are chosen in a proper way, as shown in Subsection \ref{subsec: algorithms: complexity}.

For the rest of this section, we use the following notation for the analysis of both algorithms.
\begin{definition}
    Let
    \begin{equation*}
        B := \text{\normalfont \nameref{alg: approximated matrix}}(A, x, y, l, u, \epsilon)
    \end{equation*}
    where $(A, x, y, l, u, \epsilon)$ is some valid input for the algorithm with $A \in \C^{n \times n}$ and
    \begin{equation*}
        (L, d, p, \omega, \delta) := \text{\normalfont \nameref{alg: approximated decomposition}}(A, x, y, l, u, \epsilon).
    \end{equation*}
    Define $D := \diag(d)$ the diagonal matrix with $d$ as the diagonal. Define $P \in \R^{n \times n}$ as the permutation matrix induced by $p$, which is
    \begin{equation*}
        P_{i j}
        :=
        \begin{cases}
        1 \text{ if } j = p_i\\
        0 \text{ else}
        \end{cases}
        \text{ for all }
        i, j \in \{1, \ldots, n\}
        .
    \end{equation*}
\end{definition}

\subsection{Representation of the approximation matrix} \label{subsec: algorithms: representation}

\begin{sloppypar}
In this subsection it is shown that $B = P^T L D L^H P$. This means that \nameref{alg: approximated matrix} calculates the matrix represented by the decomposition calculated by \nameref{alg: approximated decomposition}. This will be crucial for further investigation of \nameref{alg: approximated matrix}.
\end{sloppypar}

First, we prove that $p$ is a permutation vector.

\begin{lemma} \label{lemma: approximated LDLH decomposition: p uniqueness}
    \begin{equation*}
        \{p_i ~|~ i \in \{1, \ldots, n\}\}
        =
        \{1, \ldots, n\}
        .
    \end{equation*}
\end{lemma}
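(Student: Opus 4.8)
The plan is to trace the role of the vector $p$ through the algorithm \nameref{alg: approximated decomposition} and show that after the main \texttt{for} loop (\autoref{line: alg: approximated decomposition: main for loop}) it is still a permutation of $\{1,\ldots,n\}$. Since the only lines that touch $p$ are the initialization on \autoref{line: alg: approximated decomposition: init p} and the swap on \autoref{line: alg: approximated decomposition: swap}, I would formulate the claim as an invariant of the loop: after \autoref{line: alg: approximated decomposition: init p} the vector $p$ is the identity permutation, hence $\{p_i \mid i \in \{1,\ldots,n\}\} = \{1,\ldots,n\}$; and each execution of \autoref{line: alg: approximated decomposition: swap} only exchanges two entries $p_i$ and $p_j$, which is a transposition and therefore maps a permutation of $\{1,\ldots,n\}$ to a permutation of $\{1,\ldots,n\}$.

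Concretely, I would argue by induction on the loop counter $i$. Base case: immediately before the first iteration, \autoref{line: alg: approximated decomposition: init p} has set $p_i = i$ for all $i$, so $p$ is bijective onto $\{1,\ldots,n\}$. Inductive step: assume $p$ is a permutation of $\{1,\ldots,n\}$ at the start of iteration $i$. On \autoref{line: alg: approximated decomposition: permutation} some index $j \in \{i,\ldots,n\}$ is selected, and on \autoref{line: alg: approximated decomposition: swap} the values $p_i$ and $p_j$ are swapped (the simultaneous swapping of rows of $L$ is irrelevant for this lemma). Swapping two entries of a vector preserves the multiset of its values, so $\{p_1,\ldots,p_n\}$ is unchanged and $p$ remains a permutation of $\{1,\ldots,n\}$. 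No other line in the loop body — in particular none of the lines modifying $L$, $d$, $\omega$, $\delta$, or $\alpha$ — assigns to any component of $p$. Hence the invariant is maintained, and after the loop terminates $\{p_i \mid i \in \{1,\ldots,n\}\} = \{1,\ldots,n\}$, which is exactly the claim.

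I do not expect any real obstacle here: the statement is essentially a bookkeeping fact about the well-known pivoting mechanism in Cholesky-type factorizations, and the proof is a short loop-invariant argument. The only point requiring a sentence of care is to confirm that \autoref{line: alg: approximated decomposition: swap} is the unique place where $p$ is modified after initialization, so that the transposition argument indeed covers every change to $p$; this is immediate by inspection of Algorithm \ref{alg: approximated decomposition}.
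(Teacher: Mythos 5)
Your proposal is correct and follows essentially the same argument as the paper: $p$ is initialized to the identity permutation and thereafter altered only by the swap in \autoref{line: alg: approximated decomposition: swap}, so the set of its values stays $\{1,\ldots,n\}$. Your version merely makes the loop-invariant/induction structure explicit, which the paper leaves implicit.
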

\begin{proof}
    In \nameref{alg: approximated decomposition}, the variable $p$ is initiated at \autoref{line: alg: approximated decomposition: init p} of the algorithm so that $p_i = i$ for all $i \in \{1, \ldots, n\}$. After its initialization, the variable $p$ is only changed in \autoref{line: alg: approximated decomposition: swap}. Here some of its components are swapped in each iteration. Thus
    $
        \{p_i ~|~ i \in \{1, \ldots, n\}\}
        =
        \{1, \ldots, n\}
    $
    at the end of the algorithm.
    \flushright\qed
\end{proof}

Next it is shown how a corresponding inverse permutation vector can be defined.

\begin{lemma} \label{lemma: p q i = i}
    Define
    \begin{equation*}
        q_{p_i} := i
        \text{ for all }
        i \in \{1, \ldots, n\}
        .
    \end{equation*}
    $q$ is well defined and
    \begin{equation*}
        p_{q_i}
        =
        i
        \text{ for all }
        i \in \{1, \ldots, n\}
        .
    \end{equation*}
\end{lemma}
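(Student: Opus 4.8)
The plan is to recognize $q$ as the inverse of the permutation $p$ and to deduce both assertions directly from the surjectivity and injectivity of $p$ already established in Lemma~\ref{lemma: approximated LDLH decomposition: p uniqueness}.

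First I would settle well-definedness. By Lemma~\ref{lemma: approximated LDLH decomposition: p uniqueness} the map $i \mapsto p_i$ sends the finite set $\{1, \ldots, n\}$ onto itself, hence is a bijection; in particular it is injective. Consequently, for every $m \in \{1, \ldots, n\}$ there is exactly one index $i$ with $p_i = m$. The prescription $q_{p_i} := i$ assigns to the component indexed by $m = p_i$ the value $i$; injectivity guarantees that no component receives two conflicting values, and surjectivity guarantees that every component $q_m$ is assigned some value. Thus $q \in \{1, \ldots, n\}^n$ is well defined.

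Next I would verify $p_{q_i} = i$. Fix $i \in \{1, \ldots, n\}$. By surjectivity of $p$ there exists $j$ with $p_j = i$, and applying the defining relation of $q$ with this $j$ yields $q_i = q_{p_j} = j$. Substituting back gives $p_{q_i} = p_j = i$, which is the claim.

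I expect no real obstacle here: the only point requiring care is spelling out that ``well defined'' refers to the fact that the self-referential-looking assignment $q_{p_i} := i$ neither over- nor under-determines any entry of $q$, and this is precisely injectivity together with surjectivity of $p$ — both already packaged in Lemma~\ref{lemma: approximated LDLH decomposition: p uniqueness}. One could instead argue via the permutation matrix $P$ from the preceding Definition, noting $P^{-1} = P^T$ and that $q$ is the permutation vector associated with $P^T$, but the elementary counting argument above is shorter and fully self-contained.
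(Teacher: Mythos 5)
Your argument is correct and follows the same route as the paper's own proof: both rely on Lemma~\ref{lemma: approximated LDLH decomposition: p uniqueness} for well-definedness and then pick, for a given $i$, some $j$ with $p_j = i$ so that $p_{q_i} = p_{q_{p_j}} = p_j = i$. You merely spell out the injectivity/surjectivity bookkeeping for well-definedness more explicitly than the paper, which is fine.
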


\begin{proof}
    $q$ is well defined due to Lemma \ref{lemma: approximated LDLH decomposition: p uniqueness}. Let $i \in \{1, \ldots, n\}$. Due to Lemma \ref{lemma: approximated LDLH decomposition: p uniqueness}, a $j \in \{1, \ldots, n\}$ exists with $p_j = i$. Furthermore $q_{p_j} = j$ due to the definition of $q$. Thus, $p_{q_i} = p_{q_{p_j}} = p_{j} = i$ follows.
    \flushright\qed
\end{proof}
    
A fast way to calculate $LDL^H$, using only $A$, $\omega$, $\delta$ and $p$, is pointed out in the next lemma.
\begin{lemma} \label{lemma: approximated LDLH decomposition: decomposition equation}
    \begin{equation*}
        (LDL^H)_{ii}
        =
        A_{p_i p_i} + \delta_{p_i}
    \end{equation*}
    and
    \begin{equation*}
        (LDL^H)_{i j}
        =
        A_{p_i p_j} \omega_{p_{\max\{i, j\}}}
        \text{ if }
        d_{\min\{i, j\}} \neq 0
        \text{ or }
        \omega_{p_{\max\{i, j\}}} = 0
    \end{equation*}
    for all $i, j \in \{1, \ldots, n\}$ with $i \neq j$.
\end{lemma}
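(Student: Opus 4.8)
The plan is to unfold the definitions of the $L$, $d$, $\omega$ and $\delta$ produced by \nameref{alg: approximated decomposition} and compute the entries of $LDL^H$ directly. For the diagonal, I would observe that $L_{ii}=1$ by \autoref{line: alg: approximated decomposition: L set diagonal and upper triangle}, so $(LDL^H)_{ii}=\sum_{k=1}^{n}|L_{ik}|^2 d_k = d_i + \sum_{k=1}^{i-1}|L_{ik}|^2 d_k$ (the sum stops at $i-1$ since $L$ is lower triangular). The key is to identify this trailing sum with $\alpha_{p_i}$ at the moment \autoref{line: alg: approximated decomposition: delta definition} is executed: each pass $k<i$ of the inner loop contributes $|L_{ik}|^2 d_k$ to $\alpha_{p_i}$ via \autoref{line: alg: approximated decomposition: add alpha} (here one must be slightly careful that the row of $L$ being accumulated is the one with permutation index $p_i$, which is exactly what the bookkeeping with $p$ and the swaps in \autoref{line: alg: approximated decomposition: swap} arranges; also the $\omega$-scaling of row $i$ in \autoref{line: alg: approximated decomposition: L modification omega} happens after $\alpha_{p_i}$ has been read, so $\alpha_{p_i}$ holds the unscaled sum). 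Then $\delta_{p_i} = d_i + \omega_{p_i}^2\alpha_{p_i} - A_{p_ip_i}$ together with the fact that the stored off-diagonal entries of row $i$ have been multiplied by $\omega_{p_i}$ gives $(LDL^H)_{ii} = d_i + \omega_{p_i}^2\alpha_{p_i} = A_{p_ip_i}+\delta_{p_i}$, as claimed.

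For the off-diagonal case, fix $i\neq j$ and write $m=\min\{i,j\}$, $M=\max\{i,j\}$, so $(LDL^H)_{ij}=\sum_{k=1}^{m}L_{ik}\,d_k\,\conj{L}_{jk} = L_{iM}'\cdot d_m\cdot\text{(stuff)}$ — more precisely the sum runs up to $m$ because row $M$ of $L$ has no entries past column $m$... actually it runs up to $m$ since $L_{Mk}=0$ for $k>M$ and we also need $L_{mk}=0$ for $k>m$, so the $k=m$ term is $L_{Mm}\,d_m\,\conj{L_{mm}} = L_{Mm}\,d_m$. The plan is to induct on $M$ (equivalently, on the number of the column loop). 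When $d_m\neq 0$, \autoref{line: alg: approximated decomposition: L definition} defines (the $\omega$-scaled) $L_{Mm}$ so that $L_{Mm}\,d_m = \omega_{p_M}\bigl(A_{p_Mp_m} - \sum_{k=1}^{m-1}L_{Mk}\conj{L}_{mk}d_k\bigr)$; substituting and telescoping the lower-index sum against the remaining terms of $(LDL^H)_{iM}$ should leave exactly $\omega_{p_M}A_{p_Mp_m}$, i.e. $A_{p_ip_j}\omega_{p_{\max\{i,j\}}}$. When $d_m=0$ but $\omega_{p_M}=0$, \autoref{line: alg: approximated decomposition: L modification omega} forces the entire $M$-th row of $L$ (off-diagonal part) to vanish — so every term $L_{Mk}$ with $k<M$ is zero — whence $(LDL^H)_{iM}=0=A_{p_ip_j}\cdot 0$, matching the formula. (The case $d_m=0$, $\omega_{p_M}\neq 0$ is deliberately excluded from the hypothesis; in it \autoref{line: alg: approximated decomposition: L set zero} sets $L_{Mm}=0$ but the deeper columns need not, so the clean formula fails — this is precisely why \nameref{alg: approximated matrix} has the separate branch at \autoref{line: alg: approximated matrix: set below diagonal without omega}.)

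The main obstacle I anticipate is not the algebra but the indexing discipline: keeping straight that the physical row index $i$ of $L$ corresponds to the logical matrix index $p_i$, that the row swaps in \autoref{line: alg: approximated decomposition: swap} only permute the already-computed columns $1,\dots,i-1$ and hence do not disturb the partial sums, and that $\alpha_{p_i}$ is read before row $i$ is rescaled by $\omega_{p_i}$. Once these are pinned down, the diagonal identity is a one-line substitution of the definition of $\delta_{p_i}$, and the off-diagonal identity is a short induction on $\max\{i,j\}$ splitting on the two allowed cases $d_{\min\{i,j\}}\neq 0$ and $\omega_{p_{\max\{i,j\}}}=0$. I would also remark that this lemma is exactly the ingredient needed for the announced identity $B=P^TLDL^HP$ in Subsection~\ref{subsec: algorithms: representation}: conjugating by $P$ turns the index pair $(i,j)$ into $(p_i,p_j)$, so the two displayed formulas become $B_{p_ip_i}=A_{p_ip_i}+\delta_{p_i}$ and $B_{p_ip_j}=\omega_{p_{\max}}A_{p_ip_j}$, matching lines~\ref{line: alg: approximated matrix: set diagonal} and~\ref{line: alg: approximated matrix: set below diagonal with omega} of \nameref{alg: approximated matrix}.
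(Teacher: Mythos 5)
Your proposal is correct and follows essentially the same route as the paper's proof: a direct computation of the entries of $LDL^H$ from the algorithm's update rules, identifying the diagonal trailing sum with $\omega_{p_i}^2\alpha_{p_i}$ and invoking \autoref{line: alg: approximated decomposition: delta definition}, telescoping \autoref{line: alg: approximated decomposition: L definition} against the partial sums for the off-diagonal entries when $d_{\min\{i,j\}}\neq 0$, treating $\omega_{p_{\max\{i,j\}}}=0$ via the vanishing of the scaled pivot row, and handling the opposite triangle by Hermitian symmetry. The paper merely makes your "indexing discipline" explicit by tracking per-iteration values of $p$, $L$ and $\alpha$ with superscripts (its equations on $p$-invariance and the $L$-change relations), which is exactly the bookkeeping you correctly identify as the remaining work.
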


\begin{proof}
    First some properties of the variable $p$ during the execution of the algorithm are proved. Denote the for loop starting at \autoref{line: alg: approximated decomposition: main for loop} of the algorithm the main for loop. Let $p^{(0)}$ be the value of the variable $p$ directly before the main for loop and $p^{(i)}$ its value directly after its $i$-th iteration for each $i \in \{1, \ldots, n\}$. Its final value is denoted by $p$.
    
    Let $i \in \{1, \ldots, n\}$. The variable $p$ is initiated so that $p^{(0)}_i = i$. After its initialization, the variable $p$ is only changed in \autoref{line: alg: approximated decomposition: swap}. Here the variables $p_i$ and $p_j$ are swapped for some $j \in \{i, \ldots, n\}$ in the $i$-th iteration of the main for loop. Hence
    \begin{equation} \label{equ: decomposition equation: p permutation vector}
        \{p_i^{(j)} ~|~ i \in \{1, \ldots, n\}\}
        =
        \{1, \ldots, n\}\
        \text{ for all }
        j \in \{1, \ldots, n\}
        .
    \end{equation}
    
    Furthermore the variable $p_i$ is not changed anymore after the $i$-th iteration. Thus
    \begin{equation} \label{equ: decomposition equation: p invariance}
        p_i
        =
        p_i^{(j)}
        \text{ for all }
        i, j \in \{1, \ldots, n\}
        \text{ with }
        i \leq j
    \end{equation}
    and hence
    \begin{equation} \label{equ: decomposition equation: p inequality}
        p_i^{(i)} \neq p_j^{(j)}
        \text{ for all }
        i, j \in \{1, \ldots, n\}
        \text{ with }
        i \neq j.
    \end{equation}
    
    Next it is shown that all entries in the variables $d$, $\omega$ and $\delta$ are set once in the algorithm and are never changed after that. Hence, we do not need an index indicating the current iteration for this variables. Let $d$, $\omega$ and $ \delta$ be the final value of the corresponding variables.
    
    The value of $d_i$ is set in the $i$-th iteration of the main for loop at \autoref{line: alg: approximated decomposition: select d and omega} and nowhere else. The values of $\omega_{p_i}$ and $\delta_{p_i}$ are set in the $i$-th iteration of the main for loop at \autoref{line: alg: approximated decomposition: select d and omega} and \autoref{line: alg: approximated decomposition: delta definition} and due to \eqref{equ: decomposition equation: p inequality} nowhere else. Furthermore $\omega_i$ and $\delta_i$ are set due to equation \eqref{equ: decomposition equation: p invariance} and Lemma \ref{lemma: approximated LDLH decomposition: p uniqueness}. Hence, all entries in the variables $d$, $\omega$ and $\delta$ are set once in the algorithm and are never changed after that.
    
    Next properties of the variable $L$ in the algorithm are proved which will lead to the result of this lemma. Denote with $L^{(i)}$ the value of the variable $L$ directly after the $i$-th iteration of the main for loop for all $i \in \{1, \ldots, n\}$. $L$ denotes its final value.
    
    Let $i, j \in \{1, \ldots, n\}$ with $j < i$. The variable $L_{i j}$ is only changed in the $j$-th iteration at \autoref{line: alg: approximated decomposition: L definition} or \autoref{line: alg: approximated decomposition: L set zero}, in the $i$-th iteration at \autoref{line: alg: approximated decomposition: L modification omega} and maybe in the $k$-th iteration at \autoref{line: alg: approximated decomposition: swap} for $k \in \{j+1, \ldots, i\}$. Thus, after the $i$-th iteration it is unchanged which means
    \begin{equation} \label{equ: decomposition equation: L invariance}
    \begin{gathered}
        L_{i j}
        =
        L_{i j}^{(k)}
        \text{ for all }
        i, j, k \in \{1, \ldots, n\}
        \text{ with }
        j < i \leq k
        .
    \end{gathered}
    \end{equation}
    
    In the $i$-th iteration, the variable $L_{i j}$ might only be changed in \autoref{line: alg: approximated decomposition: swap} and \autoref{line: alg: approximated decomposition: L modification omega}. In \autoref{line: alg: approximated decomposition: swap} the variable $L_{i j}$ is only changed if it is swapped with the variable $L_{k j}$ for some $k \in \{i+1, \ldots, n\}$. This is exactly the case if the variable $p_i$ is swapped with the variable $p_k$. This together with \autoref{line: alg: approximated decomposition: L modification omega} and equation \eqref{equ: decomposition equation: p permutation vector} implies 
    \begin{equation*} 
    \begin{gathered}
        L_{i j}^{(i)}
        =
        \omega_{p_i^{(i)}} L_{k j}^{(i-1)}
        \text{ if }
        p_i^{(i)}
        =
        p_k^{(i-1)}
        \\
        \text{ for all }
        i, j, k \in \{1, \ldots, n\}
        \text{ with }
        j < i
        .
    \end{gathered}
    \end{equation*}
    This results with equation \eqref{equ: decomposition equation: p invariance} and \eqref{equ: decomposition equation: L invariance} in
    \begin{equation} \label{equ: decomposition equation: L change iteration i} 
    \begin{gathered}
        L_{i j}
        =
        \omega_{p_i} L_{k j}^{(i-1)}
        \text{ if }
        p_i
        =
        p_k^{(i-1)}
        \\
        \text{ for all }
        i, j, k \in \{1, \ldots, n\}
        \text{ with }
        j < i
        .
    \end{gathered}
    \end{equation}
    
    In the $k$-th iteration for all $k \in \{j+1, \ldots, i-1\}$, the variable $L_{i j}$ might only be changed in \autoref{line: alg: approximated decomposition: swap} due to a swap with the variable $L_{k j}$. This is exactly the case if the variable $p_i$ is swapped with the variable $p_k$. This together with equation \eqref{equ: decomposition equation: p permutation vector} implies 
    \begin{equation} \label{equ: decomposition equation: L change iteration l}
    \begin{gathered}
        L_{i j}^{(l)} = L_{k j}^{(l-1)}
        \text{ if }
        p_i^{(l)} = p_k^{(l-1)}
        \\
        \text{ for all }
        i, j, k, l \in \{1, \ldots, n\}
        \text{ with }
        j < l < i
        .
    \end{gathered}
    \end{equation}
    
    Equation \eqref{equ: decomposition equation: L change iteration i} and \eqref{equ: decomposition equation: L change iteration l} result in
    \begin{equation} \label{equ: decomposition equation: L changes}
    \begin{gathered}
        L_{i j}
        =
        \omega_{p_i} L_{k j}^{(l)}
        \text{ if }
        p_i
        =
        p_k^{(l)}
        \\
        \text{ for all }
        i, j, k, l \in \{1, \ldots, n\}
        \text{ with }
        j \leq l < i
        .
    \end{gathered}
    \end{equation}
    
    Now with this preparatory work, the main statement of this lemma can be proved. $L_{j j} = 1$ and $L_{j k} = 0$ for all $k \in \{j+1, \ldots, n\}$ due to \autoref{line: alg: approximated decomposition: L set diagonal and upper triangle}. This implies
    \begin{equation*}
        (LDL^H)_{i j}
        =
        \sum\limits_{k=1}^{n} L_{i k} \overline{L}_{j k} d_k
        =
        L_{i j} d_j + \sum\limits_{k=1}^{j-1} L_{i k} \overline{L}_{j k} d_k
        . 
    \end{equation*}
    Due to equation \eqref{equ: decomposition equation: p permutation vector}, a $l \in \{1, \ldots, n\}$ exists with $p_i = p_l^{(j)}$. Hence, equation \eqref{equ: decomposition equation: L invariance} and \eqref{equ: decomposition equation: L changes} imply    
    \begin{equation*}
        L_{i j} d_j + \sum\limits_{k=1}^{j-1} L_{i k} \overline{L}_{j k} d_k
        =
        L_{i j} d_j + \sum\limits_{k=1}^{j-1} L_{i k} \overline{L}_{j k}^{(j)} d_k
        =
        \omega_{p_i} \left( L_{i j}^{(j)} d_j + \sum\limits_{k=1}^{j-1} L_{i k}^{(j)} \overline{L}_{j k}^{(j)} d_k \right)
        .
    \end{equation*}
    Thus
    \begin{equation} \label{equ: decomposition equation: L equals omega sum}
        (LDL^H)_{i j}
        =
        \omega_{p_i} \left( L_{i j}^{(j)} d_j + \sum\limits_{k=1}^{j-1} L_{i k}^{(j)} \overline{L}_{j k}^{(j)} d_k \right)
        . 
    \end{equation}
    
    Due to \autoref{line: alg: approximated decomposition: L definition}
    \begin{equation*}
        A_{p_l^{(j)} p_j^{(j)}}
        =
        L_{l j}^{(j)} d_j + \sum\limits_{k=1}^{i-1} L_{l k}^{(j)} \overline{L}_{j k}^{(j)} d_k
        \text{ if }
        d_j \neq 0
        .
    \end{equation*}
    Furthermore $p_i = p_l^{(j)}$ by definition of $l$ and $p_j = p_j^{(j)}$ due to equation \eqref{equ: decomposition equation: p invariance}. This together with the previous two equations implies
    \begin{equation*}
        (LDL^H)_{i j}
        =
        \omega_{p_i} A_{p_i p_j}
        \text{ if }
        d_j \neq 0
        . 
    \end{equation*}
    
    Moreover with equation \eqref{equ: decomposition equation: L equals omega sum} it follows
    \begin{equation*}
        (LDL^H)_{i j}
        =
        \omega_{p_i} A_{p_i p_j}
        \text{ if }
        \omega_{p_i} = 0
        . 
    \end{equation*}
    
    $D$ is a real-valued diagonal matrix and thus Hermitian. Hence, the matrix $LDL^H$ is Hermitian as well. Since $A$ is also Hermitian, 
    \begin{equation} \label{equ: decomposition equation: offdiagonal values j i}
    \begin{gathered}
        (L D L^H)_{j i}
        =
        (\overline{L D L^H})_{i j}
        =
        \overline{\omega_{p_i} A_{p_i p_j}}
        =
        \omega_{p_i} A_{p_j p_i}
        \text{ if }
        d_j \neq 0
        \text{ or }
        \omega_{p_i} = 0
        .
    \end{gathered}
    \end{equation}
    
    The combination of the three previous equations results in
    \begin{equation*}
    \begin{gathered}
        (LDL^H)_{i j}
        =
        A_{p_i p_j} \omega_{p_{\max\{i, j\}}}
        \text{ if }
        \omega_{p_{\max\{i,j\}}} \neq 0
        \text{ or }
        d_{\min\{i,j\}} = 0\\
        \text{~for all~}
        i, j \in \{1, \ldots, n\}
        \text{~with~}
        i \neq j
    \end{gathered}
    \end{equation*}
    which is one part of the statement of this lemma.
    
    Since $L_{i i} = 1$ and $L_{i k} = 0$ for all $k \in \{i+1, \ldots, n\}$ due to \autoref{line: alg: approximated decomposition: L set diagonal and upper triangle},
    \begin{equation} \label{equ: decomposition equation: diagonal values: sum}
        (L D L^H)_{i i}
        =
        \sum\limits_{j=1}^{n} |L_{i j}|^2 d_j
        =
        d_i + \sum\limits_{j=1}^{i-1} |L_{i j}|^2 d_j
        .
    \end{equation}
    
    Define for every $k \in \{0, \ldots, i-1\}$ an $i_k \in \{1, \ldots, n\}$ with $p_i = p_{i_k}^{(k)}$ which exists uniquely due to equation \eqref{equ: decomposition equation: p permutation vector}. Then equation \eqref{equ: decomposition equation: L changes} implies    
    \begin{equation} \label{equ: decomposition equation: diagonal values: sum with omega}
        \sum\limits_{j=1}^{i-1} |L_{i j}|^2 d_j
        =
        \omega_{p_i}^2 \sum\limits_{k=1}^{i-1} |L_{i_{k} k}^{(k)}|^2 d_{k}
        .
    \end{equation}
    
    Denote with $\alpha^{(0)}$ the value of the variable $\alpha$ directly before the main for loop and with $\alpha^{(i)}$ its value directly after its $i$-th iteration for each $i \in \{1, \ldots, n\}$.
    
    Define for every $k \in \{0, \ldots, i-1\}$ an $i_k \in \{k+1, \ldots, n\}$ with $p_i = p_{i_k}^{(k)}$ which exists uniquely due to equation \eqref{equ: decomposition equation: p permutation vector}. Then
    \begin{equation*}
        \alpha_{p_{i}}^{(i)}
        =
        \alpha_{p_{i_{i-1}}^{(i-1)}}^{(i-1)}
    \end{equation*}
    and
    \begin{equation*}
    \begin{gathered}
        \alpha_{p_{i_k}^{(k)}}^{(k)}
        =
        \alpha_{p_{i_k}^{(k)}}^{(k-1)} + |L_{i_{k} k}^{(k)}|^2 d_{k}
        \text{ for all }
        k \in \{1, \ldots, i-1\}
    \end{gathered}       
    \end{equation*}
    due to \autoref{line: alg: approximated decomposition: add alpha}. Furthermore $\alpha_{p_{i_0}}^{(0)} = 0$ due to \autoref{line: alg: approximated decomposition: init alpha}. Hence
    \begin{equation} \label{equ: decomposition equation: diagonal values: alpha}
        \alpha_{p_{i}}^{(i)}
        =
        \sum\limits_{k=1}^{i-1} |L_{i_{k} k}^{(k)}|^2 d_{k}
        .
    \end{equation}
    
    The combination of equation \eqref{equ: decomposition equation: diagonal values: sum}, \eqref{equ: decomposition equation: diagonal values: sum with omega} and \eqref{equ: decomposition equation: diagonal values: alpha} results in
    \begin{equation*}
        (L D L^H)_{i i}
        =
        d_i + \omega_{p_i}^2  \alpha_{p_{i}}^{(i)}
        .
    \end{equation*}
    
    Due to \autoref{line: alg: approximated decomposition: delta definition} and equation \eqref{equ: decomposition equation: p invariance}, $d_i + \omega_{p_i}^2 \alpha_{p_i}^{(i)} = \delta_{p_i} + A_{p_i p_i}$ and thus
    \begin{equation*}
        (L D L^H)_{i i}
        =
        \delta_{p_i} + A_{p_i p_i}
    \end{equation*}
    which is the other part of the statement of this lemma.
    
    \flushright\qed
\end{proof}

The next lemma shows how $B$ can be calculate using only $A$, $\delta$, $\omega$ and $p$.
\begin{lemma} \label{lemma: approximated LDLH decomposition: B equals A modified}
    \begin{equation*}
        B_{i i}
        =
        A_{i i} + \delta_i
    \end{equation*}
    and
    \begin{equation*}
        B_{i j}
        =
        A_{i j} \omega_{b(i,j)}
        \text{ if }
        d_{q_{a(i,j)}} \neq 0
        \text{ or }
        \omega_{b(i,j)} = 0
    \end{equation*}
    where
    \begin{equation*}
    \begin{gathered}
        q_{p_i} := i,~
        a(i, j) := 
        \begin{cases}
        j \text{ if } q_i > q_j\\
        i \text{ else}
        \end{cases},~
        b(i, j) := 
        \begin{cases}
        i \text{ if } q_i> q_j\\
        j \text{ else}
        \end{cases}
    \end{gathered}
    \end{equation*}
    for all $i, j \in \{1, \ldots, n\}$ with $i \neq j$.
\end{lemma}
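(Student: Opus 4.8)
The plan is to read the claim directly off the pseudocode of \nameref{alg: approximated matrix}, using that the values $d$, $\omega$, $\delta$, $p$ appearing there are exactly those returned by \nameref{alg: approximated decomposition} and that the $q$ defined on \autoref{line: alg: approximated matrix: definition q} coincides with the one from Lemma \ref{lemma: p q i = i}, hence is well defined, satisfies $p_{q_i} = i$, and (being a permutation) never takes the same value twice. The diagonal part is then immediate: \autoref{line: alg: approximated matrix: set diagonal} assigns $B_{i i} \gets A_{i i} + \delta_i$, and no later line alters a diagonal entry (the inner loop writes only $B_{i j}$ with $j > i$, and \autoref{line: alg: approximated matrix: set obove diagonal} writes only $B_{j i}$ with $j > i$), so $B_{i i} = A_{i i} + \delta_i$.

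For off-diagonal entries, first take $i < j$. In the pass through the inner loop with outer index $i$ and inner index $j$, the case distinction on $q_i > q_j$ on \autoref{line: alg: approximated matrix: set a and b case 1}--\autoref{line: alg: approximated matrix: set a and b case 2} assigns $a = a(i,j)$ and $b = b(i,j)$ in the notation of the statement. Under the hypothesis $d_{q_{a(i,j)}} \neq 0$ or $\omega_{b(i,j)} = 0$ the algorithm takes the first branch and sets $B_{i j} \gets A_{i j}\,\omega_b = A_{i j}\,\omega_{b(i,j)}$; this entry is written only in this pass and never overwritten, so $B_{i j} = A_{i j}\,\omega_{b(i,j)}$. (The second branch, \autoref{line: alg: approximated matrix: set below diagonal without omega}, is excluded by the hypothesis and plays no role here.)

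For $i > j$ observe that $a(\cdot,\cdot)$ and $b(\cdot,\cdot)$ are symmetric in their two arguments, so the side condition for the pair $(i,j)$ is the same as for $(j,i)$; applying the previous paragraph to $(j,i)$ gives $B_{j i} = A_{j i}\,\omega_{b(i,j)}$, and then \autoref{line: alg: approximated matrix: set obove diagonal} yields $B_{i j} = \overline{B_{j i}} = \overline{A_{j i}}\,\omega_{b(i,j)} = A_{i j}\,\omega_{b(i,j)}$, using that $A$ is Hermitian and $\omega$ is real valued. The whole argument is essentially bookkeeping; the only points requiring attention are checking that the algorithm's branch on $q_i > q_j$ reproduces the definitions of $a$ and $b$ exactly (and that the objects $\omega_b$ and $d_{q_a}$ there are the ones named in the lemma), and treating the lower triangle through Hermitian symmetry of $A$ together with the symmetry of $a$ and $b$ in their arguments, so that the stated side condition is consistent under the exchange $i \leftrightarrow j$.
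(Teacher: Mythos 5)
Your proposal is correct and follows essentially the same route as the paper's own proof: read off the diagonal assignment, note that for the unique pass of the double loop the branch variables coincide with $a(i,j)$ and $b(i,j)$ so the first branch gives $B_{ij}=A_{ij}\,\omega_{b(i,j)}$ for $i<j$, and then transfer to the lower triangle via \autoref{line: alg: approximated matrix: set obove diagonal} together with the Hermitian symmetry of $A$ and realness of $\omega$. Your explicit check that $a$ and $b$ are symmetric in their arguments (so the side condition is invariant under $i\leftrightarrow j$) is a point the paper uses only implicitly, but it is the same argument.
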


\begin{proof}
    First of all, $q$ is well defined due to Lemma \ref{lemma: p q i = i}.     Let $i \in \{1, \ldots, n\}$. In \nameref{alg: approximated matrix}, $B_{i i}$ is set only at \autoref{line: alg: approximated matrix: set diagonal} in the $i$-th iteration of the outer for loop at \autoref{line: alg: approximated matrix: outer for loop}. Due to this line $B_{i i} = A_{i i} + \delta_i$ and thus
    \begin{equation*}
        B_{i i}
        =
        A_{i i} + \delta_i
        \text{ for all }
        i \in \{1, \ldots, n\}
    \end{equation*}
        
    Let $j \in \{i+1, \ldots, n\}$. In \nameref{alg: approximated matrix}, the variable $B_{i j}$ is set only in \autoref{line: alg: approximated matrix: set below diagonal with omega} or \autoref{line: alg: approximated matrix: set below diagonal without omega} in the $i$-th iteration of the outer for loop at \autoref{line: alg: approximated matrix: outer for loop} and the $j$-th iteration of the inner for loop at \autoref{line: alg: approximated matrix: inner for loop}. At this iteration the variables $a$ and $b$ have the the value $a(i, j)$ and $b(i, j)$, respectively, due to \autoref{line: alg: approximated matrix: set a and b case 1} and \autoref{line: alg: approximated matrix: set a and b case 2}. Hence due to \autoref{line: alg: approximated matrix: set below diagonal with omega},
    \begin{equation*}
    \begin{gathered}
        B_{i j}
        =
        A_{i j} \omega_{b(i,j)}
        \text{ if }
        d_{q_{a(i,j)}} \neq 0
        \text{ or }
        \omega_{b(i,j)} = 0\\
        \text{ for all }
        i, j \in \{1, \ldots, n\}
        \text{ with }
        i < j
        .
    \end{gathered}
    \end{equation*}
        
    The variable $B_{j i}$ is set only in \autoref{line: alg: approximated matrix: set obove diagonal} so that $B_{j i} = \overline{B}_{i j}$. Hence, the previous equation implies
    \begin{equation*}
    \begin{gathered}
        B_{j i}
        =
        \overline{B}_{i j}
        =
        \overline{A_{i j} \omega_{b(i,j)}}
        =
        \overline{A}_{i j} \omega_{b(i,j)}
        =
        A_{j i} \omega_{b(j, i)}\\
        \text{ if }
        d_{q_{a(j, i)}} \neq 0
        \text{ or }
        \omega_{b(j, i)} = 0
        \text{ for all }
        i, j \in \{1, \ldots, n\}
        \text{ with }
        i < j
        .
    \end{gathered}
    \end{equation*}
    
    \flushright\qed
\end{proof}

Next the main theorem of this subsection emphasizes the connection between \nameref{alg: approximated matrix} and \nameref{alg: approximated decomposition}.

\begin{theorem} \label{theorem: B equals PT L D LH P}
    \begin{equation*}
        B
        =
        P^T L D L^H P
        .
    \end{equation*}
\end{theorem}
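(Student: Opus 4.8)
The plan is to collapse the whole statement to a single scalar identity together with the two representation lemmas already proved. The organizing observation is that for \emph{any} matrix $M \in \C^{n \times n}$ one has $(P^T M P)_{ij} = M_{q_i q_j}$: indeed, by the definition of $P$ the entry $P_{ki}$ equals $1$ exactly when $k = q_i$ (equivalently $i = p_k$), so $(P^T M P)_{ij} = \sum_{k,l} P_{ki} M_{kl} P_{lj} = M_{q_i q_j}$, which uses only that $q$ is the inverse permutation of $p$ from Lemma~\ref{lemma: p q i = i}. Hence the theorem is equivalent to proving $B_{ij} = (LDL^H)_{q_i q_j}$ for all $i, j \in \{1, \ldots, n\}$, and I would verify this in three cases.

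For the diagonal, I would simply combine the two lemmas: Lemma~\ref{lemma: approximated LDLH decomposition: B equals A modified} gives $B_{ii} = A_{ii} + \delta_i$, while Lemma~\ref{lemma: approximated LDLH decomposition: decomposition equation} at index $q_i$ gives $(LDL^H)_{q_i q_i} = A_{p_{q_i} p_{q_i}} + \delta_{p_{q_i}}$, and $p_{q_i} = i$ by Lemma~\ref{lemma: p q i = i}, so the two coincide. For the off-diagonal part ($i \neq j$, hence $q_i \neq q_j$), the crux is the bookkeeping between $p, q$ and the selectors $a(i,j), b(i,j)$: from their definitions one checks $q_{a(i,j)} = \min\{q_i, q_j\}$ and $q_{b(i,j)} = \max\{q_i, q_j\}$, whence $p_{\max\{q_i,q_j\}} = b(i,j)$ and $A_{p_{q_i} p_{q_j}} = A_{ij}$. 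With this translation in hand, in the case $d_{q_{a(i,j)}} \neq 0$ or $\omega_{b(i,j)} = 0$ the hypothesis of Lemma~\ref{lemma: approximated LDLH decomposition: decomposition equation} at indices $q_i, q_j$ is met verbatim, giving $(LDL^H)_{q_i q_j} = A_{ij}\,\omega_{b(i,j)}$, while Lemma~\ref{lemma: approximated LDLH decomposition: B equals A modified} yields the same value for $B_{ij}$.

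The remaining case, $d_{q_{a(i,j)}} = 0$ and $\omega_{b(i,j)} \neq 0$, is not covered by Lemma~\ref{lemma: approximated LDLH decomposition: B equals A modified}, so there I would argue directly from \nameref{alg: approximated matrix}: tracing the inner loop (and using that $B$ is Hermitian, $d$ is real, and $a, b$ are symmetric in their arguments, to also cover $i > j$) shows $B_{ij} = \sum_{k=1}^{q_{a(i,j)}-1} L_{q_i k}\, d_k\, \overline{L}_{q_j k}$. On the other hand $L$ is lower triangular with unit diagonal by \autoref{line: alg: approximated decomposition: L set diagonal and upper triangle}, so $(LDL^H)_{q_i q_j} = \sum_{k=1}^{n} L_{q_i k} \overline{L}_{q_j k} d_k = \sum_{k=1}^{\min\{q_i,q_j\}} L_{q_i k} \overline{L}_{q_j k} d_k$, and the top summand carries the factor $d_{\min\{q_i,q_j\}} = d_{q_{a(i,j)}} = 0$, hence drops, leaving precisely $B_{ij}$. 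Combining the three cases gives $B_{ij} = (LDL^H)_{q_i q_j} = (P^T L D L^H P)_{ij}$ for all $i, j$, which is the claim.

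I expect the main obstacle to be purely notational rather than conceptual: one must keep straight how $\min$ and $\max$ over the permuted indices $q_i, q_j$ propagate through $a$, $b$ and $p$, so that the hypotheses and conclusions of Lemmas~\ref{lemma: approximated LDLH decomposition: decomposition equation} and~\ref{lemma: approximated LDLH decomposition: B equals A modified} align exactly, plus the small self-contained check of the ``$\sum$-branch'' of \nameref{alg: approximated matrix} that the lemmas deliberately leave open.
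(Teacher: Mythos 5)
Your proposal is correct and follows essentially the same route as the paper's own proof: reduce to $B_{ij} = (LDL^H)_{q_i q_j}$ via the definition of $P$, handle the diagonal and the ``$d_{q_{a}} \neq 0$ or $\omega_{b} = 0$'' off-diagonal case by combining Lemmas~\ref{lemma: approximated LDLH decomposition: decomposition equation} and~\ref{lemma: approximated LDLH decomposition: B equals A modified}, treat the remaining case directly from \autoref{line: alg: approximated matrix: set below diagonal without omega} using the lower triangularity of $L$ and $d_{q_a}=0$, and finish the upper triangle by Hermitian symmetry. Your explicit remark that the $k = q_a$ summand drops because $d_{q_a}=0$ makes one step slightly more transparent than the paper's wording, but the argument is the same.
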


\begin{proof}
    Define
    \begin{equation*}
        q_{p_i} := i
        \text{ for all }
        i \in \{1, \ldots, n\}
        .
    \end{equation*}
    Due to Lemma \ref{lemma: p q i = i}, $q$ is well defined and
    \begin{equation*} 
        p_{q_i}
        =
        i
        \text{ for all }
        i \in \{1, \ldots, n\}
        .
    \end{equation*}
    
    Let $i ,j \in \{1, \ldots, n\}$ with $i < j$. Define $a$ and $b$ so that
    \begin{equation*}
        q_a = \min\{q_i, q_j\}
        \text{ and }
        q_b = \max\{q_i, q_j\}
        .
    \end{equation*}
    This is well defined due to Lemma \ref{lemma: approximated LDLH decomposition: p uniqueness}.
    
    Due to \autoref{line: alg: approximated matrix: set below diagonal without omega} of \nameref{alg: approximated matrix} and the definition of the variables $a$ and $b$ in the algorithm,
    \begin{equation*}
    \begin{gathered}
        B_{i j}
        =
        \sum\limits_{k = 1}^{q_a - 1} L_{q_i k} d_k \overline{L}_{q_j k}
        \text{ if }
        d_{q_a} = 0
        \text{ and }
        \omega_b \neq 0
        .
    \end{gathered}
    \end{equation*}
    Since $L$ is a lower triangular matrix and due to the definition of $q_a$, 
    \begin{equation*}
        L_{q_i k} = 0
        \text{ or }
        L_{q_j k} = 0
        \text{ for all }
        k \in \{q_a + 1, \ldots, n\}
        .
    \end{equation*}
    Thus,    
    \begin{equation*}
    \begin{gathered}
        B_{i j}
        =
        \sum\limits_{k = 1}^{n} L_{q_i k} d_k \overline{L}_{q_j k}
        =
        (L D L^H)_{q_i q_j}
        \text{ if }
        d_{q_a} = 0
        \text{ and }
        \omega_b \neq 0
        .
    \end{gathered}
    \end{equation*}
    Furthermore Lemma \ref{lemma: approximated LDLH decomposition: decomposition equation} and \ref{lemma: approximated LDLH decomposition: B equals A modified} and the definition of $q$ imply
    \begin{equation*}
    \begin{gathered}
        B_{i j}
        =
        A_{i j} \omega_b
        =
        A_{p_{q_i} p_{q_j}} \omega_{p_{q_b}}
        =
        (L D L^H)_{q_i q_j}
        \text{ if }
        d_{q_a} \neq 0
        \text{ or }
        \omega_b = 0
        .
    \end{gathered}
    \end{equation*}
    Due to \autoref{line: alg: approximated matrix: set obove diagonal} of \nameref{alg: approximated matrix},
    \begin{equation*}
    \begin{gathered}
        B_{j i}
        =
        \overline{B}_{i j}
        =
        (\overline{L D L^H})_{q_i q_j}
        =
        (L D L^H)_{q_j q_i}
    \end{gathered}
    \end{equation*}
    Lemma \ref{lemma: approximated LDLH decomposition: decomposition equation} and Lemma \ref{lemma: approximated LDLH decomposition: B equals A modified} imply
    \begin{equation*}
        B_{i i}
        =
        A_{i i} + \delta_{i}
        =
        A_{p_{q_i} p_{q_i}} + \delta_{p_{q_i}}
        =
        (L D L^H)_{q_i q_i}
        .
    \end{equation*}
    Thus
    \begin{equation*}
    \begin{gathered}
        B_{i j}
        =
        (L D L^H)_{q_i q_j}
        \text{ for all }
        i, j \in \{1, \ldots, n\}
        .
    \end{gathered}
    \end{equation*}
    The definition of $P$ implies
    \begin{equation*}
        P_{q_i i} = 1
        \text{ and }
        P_{q_j i} = 0
        \text{ for all }
        i, j \in \{1, \ldots, n\}
        \text{ with }
        i \neq j
        .
    \end{equation*}
    Hence,
    \begin{equation*}
    \begin{gathered}
        (L D L^H)_{q_i q_j}
        =
        \sum\limits_{k=0}^n \sum\limits_{j=0}^n
        P_{k i} (L D L^H)_{k l} P_{l j}
        =
        (P^T L D L^H P)_{i j}
        \\
        \text{ for all }
        i, j \in \{1, \ldots, n\}
        .
    \end{gathered}
    \end{equation*}
    
    \flushright\qed
\end{proof}

\subsection{Positive semidefinite approximation} \label{subsec: algorithms: positive semidefinite}

\nameref{alg: approximated matrix} can be forced to calculate positive definite or positive semidefinite matrices using $l > 0$ or $l \geq 0$, respectively as shown in Theorem \ref{theorem: algorithms: positive semidefinite}. Thus, \nameref{alg: approximated matrix} meets objective \ref{objective: positive semidefinite} if $l \geq 0$ is chosen. To prove this theorem, it is first shown that the values of $d$ are bounded below by $l$. For subsequent proofs, it is also shown that the values of $d$ are bounded above by $u$ and $y$.
\begin{lemma} \label{lemma: approximated LDLH decomposition: d bounds}
    \begin{equation*}
        d_i
        \in
        [l, u] \cap \R
        \text{ and }
        |d_i| \notin (0, \epsilon)
    \end{equation*}
    and if $l \geq 0$,
    \begin{equation*}
        d_i \leq y_{p_i}
    \end{equation*}
    for all $i \in \{1, \ldots, n\}$.
\end{lemma}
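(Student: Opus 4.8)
The plan is to read off the first two assertions directly from the construction and to derive the inequality $d_i \le y_{p_i}$ from a sign invariant on the auxiliary vector $\alpha$.

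For the first part, note that in the $i$-th iteration of the main for loop the value $d_i$ is chosen at \autoref{line: alg: approximated decomposition: select d and omega} precisely so that $d_i \in [l,u]$ and $|d_i| \notin (0,\epsilon)$. Since the output $d$ lies in $\R^n$, this immediately gives $d_i \in [l,u] \cap \R$ and $|d_i| \notin (0,\epsilon)$ for every $i \in \{1,\ldots,n\}$. (That such a choice is always possible follows from the input conditions on $x, y, l, u, \epsilon$, but for this lemma we only need the properties that the chosen value satisfies.)

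For the bound $d_i \le y_{p_i}$ under the assumption $l \ge 0$, the key step is the claim that, at every point during the execution, every component of $\alpha$ is nonnegative. I would prove this by induction on the iteration of the main for loop: initially $\alpha \equiv 0$ by \autoref{line: alg: approximated decomposition: init alpha}; the variable $\alpha$ is modified only at \autoref{line: alg: approximated decomposition: add alpha}, where one component is increased by $|L_{j i}|^2 d_i$, and since $l \ge 0$ and $d_i \in [l,u]$ we have $d_i \ge 0$, while $|L_{j i}|^2 \ge 0$, so the increment is nonnegative and the invariant is preserved (the swap at \autoref{line: alg: approximated decomposition: swap} does not touch $\alpha$). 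Now fix $i$: when $d_i$ is selected at \autoref{line: alg: approximated decomposition: select d and omega}, the value $\alpha_{p_i}$ read there is the one produced by the previous iterations — nothing in iteration $i$ before this line alters $\alpha$ — hence nonnegative by the invariant, and together with $\omega_{p_i}^2 \ge 0$ this gives $\alpha_{p_i}\omega_{p_i}^2 \ge 0$. The selection also guarantees $d_i + \alpha_{p_i}\omega_{p_i}^2 \in [x_{p_i}, y_{p_i}]$, so in particular $d_i \le d_i + \alpha_{p_i}\omega_{p_i}^2 \le y_{p_i}$, which is the remaining claim.

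The only mildly delicate point is keeping track of which ``version'' of $\alpha$ is read at \autoref{line: alg: approximated decomposition: select d and omega} (namely the value after the previous iteration, under the already-swapped index $p_i$); I expect this bookkeeping, rather than any genuine difficulty, to be the main thing to get right. Everything else is immediate from the text of the algorithm and the sign of $l$.
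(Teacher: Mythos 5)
Your proof is correct and follows essentially the same route as the paper: read the first two properties directly off the selection at \autoref{line: alg: approximated decomposition: select d and omega}, establish $\alpha \geq 0$ (from \autoref{line: alg: approximated decomposition: init alpha} and \autoref{line: alg: approximated decomposition: add alpha} together with $d_i \geq l \geq 0$), and then deduce $d_i \leq d_i + \omega_{p_i}^2 \alpha_{p_i} \leq y_{p_i}$. Your explicit induction and the bookkeeping remark about which value of $\alpha_{p_i}$ is read at the selection line merely spell out what the paper states more tersely.
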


\begin{proof}
    Let $i \in \{1, \ldots, n\}$. In \nameref{alg: approximated decomposition} the variable $d$ is only changed in \autoref{line: alg: approximated decomposition: select d and omega}. Here $d_i$ is chosen at the $i$-th iteration of the surrounding for loop so that $d_i \in [l, u] \cap \R$ and $|d_i| \notin (0, \epsilon)$. Apart from that, the variable $d_i$ is not set or changed anymore, so
    \begin{equation*}
        d_i
        \in
        [l, u] \cap \R
        \text{ and }
        |d_i| \notin (0, \epsilon)
        \text{ for all }
        i \in \{1, \ldots, n\}
        .
    \end{equation*}
    
    The variable $\alpha$ in \nameref{alg: approximated decomposition} is only changed in \autoref{line: alg: approximated decomposition: init alpha} and \autoref{line: alg: approximated decomposition: add alpha}. Due to this lines and the previous equation,
    \begin{equation*}
        \alpha_i \geq 0
        \text{ if }
        l \geq 0
        .
    \end{equation*}
    
    In \autoref{line: alg: approximated decomposition: select d and omega}, $d_i$ is also chosen so that $d_i + \omega_{p_i}^2 \alpha_{p_i} \leq y_{p_i}$. This implies, together with the previous equation,
    \begin{equation*}
        d_i \leq y_{p_i}
        \text{ if }
        l \geq 0
        \text{ for all }
        i \in \{1, \ldots, n\}
        .
    \end{equation*}
    
    \flushright\qed
\end{proof}

\begin{theorem} \label{theorem: algorithms: positive semidefinite}
    $B$ is positive semidefinite if $l \geq 0$ and positive definite if $l > 0$.
\end{theorem}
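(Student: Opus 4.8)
The plan is to use the representation $B = P^T L D L^H P$ established in Theorem \ref{theorem: B equals PT L D LH P}, which reduces the positive (semi)definiteness of $B$ to that of $L D L^H$, since $P$ is a permutation matrix and hence orthogonal: for any $v \in \C^n$, writing $w := L^H P v$, we have $v^H B v = (P v)^H L D L^H (P v) = w^H D w = \sum_{k=1}^n d_k |w_k|^2$. Because $D = \diag(d)$ is diagonal with real entries, this sum is nonnegative whenever every $d_k \geq 0$, and it is strictly positive for $v \neq 0$ provided every $d_k > 0$ and the map $v \mapsto w = L^H P v$ is injective.

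First I would invoke Lemma \ref{lemma: approximated LDLH decomposition: d bounds}, which gives $d_i \in [l,u] \cap \R$ for all $i$. If $l \geq 0$ this immediately yields $d_k \geq 0$ for all $k$, so $v^H B v = \sum_k d_k |w_k|^2 \geq 0$ and $B$ is positive semidefinite. If $l > 0$ then $d_k \geq l > 0$ for all $k$, so $v^H B v \geq l \sum_k |w_k|^2 = l \, \|L^H P v\|_2^2$, and it remains to argue this is positive for $v \neq 0$. Since $P$ is invertible, it suffices that $L^H$ (equivalently $L$) is invertible. By \autoref{line: alg: approximated decomposition: L set diagonal and upper triangle} of \nameref{alg: approximated decomposition}, $L$ is lower triangular with $L_{ii} = 1$ for all $i$, hence $\det L = 1 \neq 0$ and $L$ is invertible. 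Therefore $v \neq 0 \implies P v \neq 0 \implies L^H P v \neq 0 \implies v^H B v > 0$, so $B$ is positive definite.

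The only mild subtlety — not really an obstacle — is that one must know $L$ has unit diagonal at the \emph{end} of the algorithm, which is exactly what \autoref{line: alg: approximated decomposition: L set diagonal and upper triangle} enforces after the main loop, so no reasoning about intermediate states of $L$ is needed here; all the delicate bookkeeping about permutations and modifications has already been absorbed into Theorem \ref{theorem: B equals PT L D LH P}. One could also present the argument slightly more symmetrically by noting $B = (P^T L) D (P^T L)^H$ and that $P^T L$ is invertible (product of invertible matrices), then reading off positive (semi)definiteness from the congruence with $D$ and Sylvester-type reasoning, but the direct quadratic-form computation above is cleaner and self-contained.
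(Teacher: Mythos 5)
Your argument is correct and follows essentially the same route as the paper's own proof: apply Theorem \ref{theorem: B equals PT L D LH P} to write $z^H B z = (L^H P z)^H D (L^H P z)$, bound the $d_i$ via Lemma \ref{lemma: approximated LDLH decomposition: d bounds}, and use the invertibility of $L$ and $P$ for the definite case. Your explicit note that \autoref{line: alg: approximated decomposition: L set diagonal and upper triangle} gives $L$ unit diagonal (hence $\det L = 1$) merely spells out a detail the paper leaves implicit.
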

\begin{proof}
    Theorem \ref{theorem: B equals PT L D LH P} implies
    \begin{equation*}
        z^H B z
        =
        z^H P^T L D L^H P z
        =
        (L^H P z)^H D (L^H P z)
    \end{equation*}
    for all $z \in \C^n$. Moreover $L$ and $P$ are invertible. Hence, $B$ is positive semidefinite if $D_{i i} = d_i \geq 0$ and positive definite if $D_{i i} = d_i > 0$ for all $i \in \{1, \ldots, n\}$. Thus, Lemma \ref{lemma: approximated LDLH decomposition: d bounds} implies that $B$ is positive semidefinite if $l \geq 0$ and positive definite if $l > 0$.
    
\end{proof}

\subsection{Diagonal values} \label{subsec: algorithms: diagonal values}

\nameref{alg: approximated matrix} allows to define lower and upper bounds for the diagonal values of $B$ using $x$ and $y$ as proved in Theorem \ref{theorem: algorithms: approximated matrix: diagonal values}. This allows to predefined diagonal values of $B$ by setting both bounds to the desired diagonal values. Thus, \nameref{alg: approximated matrix} meets objective \ref{objective: diagonal values} by appropriately selecting the parameters $x$ and $y$.

It should be taken into account that the algorithm requires $x_i \leq u$ and $l \leq y_i$ for all $i \in \{1, \ldots, n\}$. Hence, if positive semidefinite approximations are required, only nonnegative values can be used as predefined diagonal values. However, this is not an actual restriction, since positive semidefinite matrices always have nonnegative diagonal values.

\begin{theorem} \label{theorem: algorithms: approximated matrix: diagonal values}
    \begin{equation*}
        x_i
        \leq
        B_{i i}
        \leq
        y_i
        \text{ for all }
        i \in \{1, \ldots, n\}
        .
    \end{equation*}
\end{theorem}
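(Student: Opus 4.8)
The plan is to reduce the statement to the selection constraint enforced in \autoref{line: alg: approximated decomposition: select d and omega} of \nameref{alg: approximated decomposition}, together with the definition of $\delta_{p_i}$ in \autoref{line: alg: approximated decomposition: delta definition}. First I would invoke Lemma \ref{lemma: approximated LDLH decomposition: B equals A modified} to rewrite the diagonal of $B$ as $B_{i i} = A_{i i} + \delta_i$ for all $i \in \{1, \ldots, n\}$. By Lemma \ref{lemma: approximated LDLH decomposition: p uniqueness} the vector $p$ is a permutation of $\{1, \ldots, n\}$, so every index $m$ equals $p_i$ for some $i$; hence it suffices to show $x_{p_i} \leq A_{p_i p_i} + \delta_{p_i} \leq y_{p_i}$ for every $i \in \{1, \ldots, n\}$.

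For the core step I would fix $i$ and track the entry $\alpha_{p_i}$ through the $i$-th iteration of the main for loop. After the swap in \autoref{line: alg: approximated decomposition: swap} the component $p_i$ already holds its final value (cf.\ equations \eqref{equ: decomposition equation: p permutation vector} and \eqref{equ: decomposition equation: p invariance}), and within that iteration the variable $\alpha$ is modified only in \autoref{line: alg: approximated decomposition: add alpha}, which affects the entries $\alpha_{p_j}$ with $j > i$; since $p^{(i)}$ is a permutation fixing $p_i$, none of those indices equals $p_i$. Therefore the value of $\alpha_{p_i}$ read in \autoref{line: alg: approximated decomposition: select d and omega} agrees with the value read in \autoref{line: alg: approximated decomposition: delta definition} and with the value $\alpha_{p_i}^{(i)}$ at the end of the iteration. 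By the selection rule of \autoref{line: alg: approximated decomposition: select d and omega} we have $d_i + \omega_{p_i}^2 \alpha_{p_i} \in [x_{p_i}, y_{p_i}]$, and \autoref{line: alg: approximated decomposition: delta definition} gives $\delta_{p_i} = d_i + \omega_{p_i}^2 \alpha_{p_i} - A_{p_i p_i}$, so $A_{p_i p_i} + \delta_{p_i} = d_i + \omega_{p_i}^2 \alpha_{p_i} \in [x_{p_i}, y_{p_i}]$. Combining this with the reduction of the first paragraph yields $x_{p_i} \leq B_{p_i p_i} \leq y_{p_i}$, and letting $i$ range over $\{1, \ldots, n\}$ finishes the proof.

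The only delicate point — and the main obstacle — is the bookkeeping that the value of the entry $\alpha_{p_i}$ relevant in \autoref{line: alg: approximated decomposition: select d and omega} and \autoref{line: alg: approximated decomposition: delta definition} is never altered afterwards, so that the membership in $[x_{p_i}, y_{p_i}]$ established at selection time survives to the final $\delta_{p_i}$ and hence to $B_{p_i p_i}$. This is exactly the iteration-indexing argument already carried out in the proof of Lemma \ref{lemma: approximated LDLH decomposition: decomposition equation}; in fact one can bypass it almost entirely by quoting the intermediate identity $\delta_{p_i} + A_{p_i p_i} = d_i + \omega_{p_i}^2 \alpha_{p_i}^{(i)}$ derived there and then applying the selection constraint of \autoref{line: alg: approximated decomposition: select d and omega} directly to $\alpha_{p_i}^{(i)}$.
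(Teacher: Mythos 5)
Your proposal is correct and follows essentially the same route as the paper's own proof: rewrite $B_{ii}=A_{ii}+\delta_i$ via Lemma \ref{lemma: approximated LDLH decomposition: B equals A modified}, use Lemma \ref{lemma: approximated LDLH decomposition: p uniqueness} to pass to the permuted indices, and combine the selection constraint of \autoref{line: alg: approximated decomposition: select d and omega} with the definition of $\delta_{p_i}$ in \autoref{line: alg: approximated decomposition: delta definition}, noting that $p_i$ and $\delta_{p_i}$ are not altered afterwards. Your extra bookkeeping on $\alpha_{p_i}$ only makes explicit what the paper states briefly; no gap.
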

\begin{proof}
    In the \nameref{alg: approximated matrix}, \nameref{alg: approximated decomposition} is called first to calculate $L, d, p, \omega$ and $\delta$. Let $i \in \{1, \ldots, n\}$. At the $i$-th iteration of the outer for loop in \nameref{alg: approximated decomposition},
    \begin{equation*}
        d_i + \omega_{p_i}^2 \alpha_{p_i}
        \in
        [x_{p_i}, y_{p_i}]
    \end{equation*}
    due to \autoref{line: alg: approximated decomposition: select d and omega} and 
    \begin{equation*}
        \delta_{p_i}
        =
        d_i + \omega_{p_i}^2 \alpha_{p_i} - A_{p_i p_i}
    \end{equation*}
    due to \autoref{line: alg: approximated decomposition: delta definition} and thus also
    \begin{equation*}
        A_{p_i p_i} + \delta_{p_i}
        \in
        [x_{p_i}, y_{p_i}]
        .
    \end{equation*}
    The variables $p_i$ and $\delta_{p_i}$ are not changed anymore after that. Thus
    \begin{equation*}
        A_{p_i p_i} + \delta_{p_i}
        \in
        [x_{p_i}, y_{p_i}]
        \text{ for all }
        i \in \{1, \ldots, n\}
    \end{equation*}
    at the end of the algorithm. Due to Lemma \ref{lemma: approximated LDLH decomposition: p uniqueness}, 
    \begin{equation*}
        \{p_i \in \{1, \ldots, n\}\} = \{1, \ldots, n\}
    \end{equation*}
    and thus
    \begin{equation*}
        A_{i i} + \delta_{i}
        \in
        [x_{i}, y_{i}]
        .
    \end{equation*}
    Lemma \ref{lemma: approximated LDLH decomposition: B equals A modified} states that
    \begin{equation*}
        B_{i i}
        =
        A_{i i} + \delta_i
    \end{equation*}
    and thus
    \begin{equation*}
        x_{i} \leq B_{i i} \leq y_{i}
        .
    \end{equation*}   
    
    \flushright\qed 
\end{proof}

\subsection{Condition number} \label{subsec: algorithms: condition number}

The condition number of $B$ can be controlled by $l, u$ and $y$ as shown in Theorem \ref{theorem: algorithms: condition number}. Hence, \nameref{alg: approximated matrix} meets objective \ref{objective: well-conditioned} with suitable chosen parameters.

\begin{theorem} \label{theorem: algorithms: condition number}
    Let $l > 0$. Then
    \begin{equation*}
    \begin{gathered}
        \kappa_2(L)
        \leq
        2 \left( \frac{a}{l} \right)^\frac{n}{2},~
        \kappa_2(D)
        \leq
        \frac{b}{l}
        \text{ and }
        \kappa_2(B)
        \leq
        4 \frac{a^n b}{l^{n+1}}\\
        \text{with }
        a := \frac{1}{n} \sum\limits_{i=1}^{n} y_i
        \text{ and }
        b := \min\{u, \max_{i=1,\ldots,n} y_i\}
        .
    \end{gathered}
    \end{equation*}
\end{theorem}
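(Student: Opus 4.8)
The plan is to bound the three condition numbers in turn, using the decomposition $B = P^T L D L^H P$ from Theorem~\ref{theorem: B equals PT L D LH P} and the diagonal bounds from Lemma~\ref{lemma: approximated LDLH decomposition: d bounds}. Since $P$ is orthogonal (unitary), $\kappa_2(B) = \kappa_2(L D L^H)$, and the submultiplicativity of the spectral norm gives $\|LDL^H\| \le \|L\|\,\|D\|\,\|L^H\|$ and $\|(LDL^H)^{-1}\| \le \|L^{-1}\|\,\|D^{-1}\|\,\|L^{-H}\|$, hence $\kappa_2(B) \le \kappa_2(L)^2 \kappa_2(D)$. So the heart of the matter is to bound $\kappa_2(L)$ and $\kappa_2(D)$ separately; combining the two displayed bounds then yields $4 (a/l)^n (b/l) = 4 a^n b / l^{n+1}$ as claimed.

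For $\kappa_2(D)$: since $D = \diag(d)$ with all $d_i > 0$ (as $l > 0$), we have $\kappa_2(D) = \max_i d_i / \min_i d_i$. Lemma~\ref{lemma: approximated LDLH decomposition: d bounds} gives $d_i \ge l$ for all $i$, so $\min_i d_i \ge l$. For the upper bound, the same lemma gives $d_i \le u$ and, since $l \ge 0$, also $d_i \le y_{p_i} \le \max_j y_j$; since $p$ is a permutation (Lemma~\ref{lemma: approximated LDLH decomposition: p uniqueness}), $\max_i d_i \le \min\{u, \max_j y_j\} = b$. Therefore $\kappa_2(D) \le b/l$.

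For $\kappa_2(L)$: this is the main obstacle, because $L$ is a general unit lower-triangular matrix and its entries are not individually controlled—only the diagonal of $LDL^H$ is. The idea is to exploit $(LDL^H)_{ii} = A_{p_i p_i} + \delta_{p_i} \le y_{p_i}$ (from Lemma~\ref{lemma: approximated LDLH decomposition: decomposition equation} together with the bound $A_{p_i p_i} + \delta_{p_i} \in [x_{p_i}, y_{p_i}]$ established in the proof of Theorem~\ref{theorem: algorithms: approximated matrix: diagonal values}). Writing out the diagonal, $(LDL^H)_{ii} = d_i + \sum_{k<i} |L_{ik}|^2 d_k$, and using $d_k \ge l > 0$, we get $l\sum_{k<i}|L_{ik}|^2 \le \sum_{k<i}|L_{ik}|^2 d_k \le y_{p_i}$, hence $\sum_{k<i}|L_{ik}|^2 \le y_{p_i}/l$. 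Thus each row of $L$ (including the unit diagonal) has squared Euclidean norm at most $1 + y_{p_i}/l$, so the Frobenius norm satisfies $\|L\|_F^2 \le n + \tfrac{1}{l}\sum_i y_{p_i} = n(1 + a/l) \le n \cdot \tfrac{2a}{l}$ using $l \le y_i$ for each $i$ (guaranteed by the input condition $\max\{x_i,l\} \le \min\{y_i,u\}$), so $a \ge l$. Hence $\|L\|_2 \le \|L\|_F \le \sqrt{2na/l}$. For $\|L^{-1}\|_2$ the plan is to use that $L^{-1}$ is again unit lower-triangular and to control it via the same row sums: one shows by the standard recursion for inverting a unit triangular matrix, or via $\|L^{-1}\|_2 \le \prod_{i}(1 + \|(\text{row } i \text{ of } L \text{ below diag})\|_2) \le \prod_i (1 + \sqrt{y_{p_i}/l})$, that $\|L^{-1}\|_2$ is bounded by a product which, by AM–GM over the $n$ factors, is at most $(1 + \sqrt{a/l})^n \le (2\sqrt{a/l})^n = 2^n (a/l)^{n/2}$. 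Multiplying, $\kappa_2(L) = \|L\|_2\|L^{-1}\|_2 \le \sqrt{2na/l}\cdot 2^n (a/l)^{n/2}$, which must then be reconciled with the sharper stated bound $2(a/l)^{n/2}$; I expect the intended argument is more careful—bounding $\|L\|_2\|L^{-1}\|_2$ directly by a single telescoping/product estimate on the pair rather than bounding each factor crudely—so that the constant collapses to $2$ and the polynomial-in-$n$ factor disappears. Getting that exact constant is the part I would need to grind through; the structural bound $\kappa_2(B) \le \kappa_2(L)^2\kappa_2(D)$ and the $\kappa_2(D)$ estimate are routine.
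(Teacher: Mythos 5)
Your treatment of $\kappa_2(D)$ and the reduction $\kappa_2(B)=\kappa_2(LDL^H)\le\kappa_2(L)^2\kappa_2(D)$ after stripping the permutation is correct and matches the paper, and your Frobenius estimate obtained from the diagonal of $LDL^H$ is essentially the one the paper uses (the paper gets $\|L\|_F^2\le\trace(LDL^H)/\min_i d_i\le na/l$ in one stroke by weighting each $|L_{ij}|^2$ with $d_j/\min_k d_k\ge 1$, which also absorbs the unit diagonal). The genuine gap is the claimed bound $\kappa_2(L)\le 2(a/l)^{n/2}$, which you do not establish: estimating $\|L\|_2\le\|L\|_F$ and $\|L^{-1}\|_2$ by a product of per-row factors yields at best something of order $\sqrt{2na/l}\,(1+\sqrt{a/l})^n$, which exceeds the stated bound by a factor that can be as large as $2^n\sqrt{2na/l}$ (e.g.\ when $a/l$ is moderate), and no refinement of that product estimate will collapse it. Moreover, the elementary-factor bound for $\|L^{-1}\|_2$ of the form you invoke involves the \emph{columns} of the strict lower triangle (the Gauss vectors), not the rows, and it is the rows, not the columns, that are controlled by the diagonal of $LDL^H$; so even the weaker inequality you write is not justified as stated.

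The missing idea is to exploit the unit determinant of $L$. Since $\det(LL^H)=\det(L)^2=1$, the determinant--trace inequality cited in the appendix (Theorem~\ref{theorem: condition number inequality: positive definite}, via the reference to Davis) gives $\kappa_2(LL^H)\le (1+\sqrt{1-c})/(1-\sqrt{1-c})\le 4/c$ with $c=(n/\trace(LL^H))^n=(n/\|L\|_F^2)^n$, hence $\kappa_2(L)\le 2(\|L\|_F^2/n)^{n/2}$. Combining this with $\|L\|_F^2\le na/l$ (using Theorem~\ref{theorem: algorithms: approximated matrix: diagonal values} to bound $\trace(B)\le\sum_i y_i=na$ and Lemma~\ref{lemma: approximated LDLH decomposition: d bounds} for $d_i\ge l$ and $d_i\le\min\{u,y_{p_i}\}\le b$) yields exactly $\kappa_2(L)\le 2(a/l)^{n/2}$, and then $\kappa_2(B)\le\kappa_2(L)^2\kappa_2(D)\le 4a^nb/l^{n+1}$. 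This is precisely how the paper proceeds: its proof of Theorem~\ref{theorem: algorithms: condition number} only verifies the trace and diagonal bounds and then invokes the appendix theorem, where the determinant argument does the work you left open.
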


\begin{proof}
    $P$ is a permutation matrix and thus $\trace(P B P^T) = \trace(B)$. Furthermore, $P B P^T$ is positive definite because a permutation matrix is invertible and $B$ is positive definite due to Theorem \ref{theorem: algorithms: positive semidefinite}. Moreover, $\kappa_2(P B P^T) = \kappa_2(B)$ because a permutation matrix is also orthogonal. Thus, Theorem \ref{theorem: algorithms: approximated matrix: diagonal values} implies
\begin{equation*}
        \frac{\trace(P B P^T)}{n}
        =
        \frac{\trace(B)}{n}
        \leq
        \frac{1}{n} \sum\limits_{i=1}^{n} y_i
        =
        a
        .
    \end{equation*}
    Theorem \ref{theorem: B equals PT L D LH P} states that
    \begin{equation*}
        P B P^T
        =
        L D L^H
        .
    \end{equation*}
    Lemma \ref{lemma: approximated LDLH decomposition: d bounds} implies
    \begin{equation*}
        l
        \leq
        D_{i i}
        \leq
        \min\{u, y_{p_i}\}
        \leq
        b
        \text{ for all }
        i \in \{1, \ldots, n\}
    \end{equation*}
    since $l \geq 0$. Hence, Theorem \ref{theorem: condition number inequality: positive definite} in the appendix implies
    \begin{equation*}
        \kappa_2(L)
        \leq
        2 \left( \frac{a}{l} \right)^\frac{n}{2},~
        \kappa_2(D)
        \leq
        \frac{b}{l}
        \text{ and }
        \kappa_2(B)
        \leq
        4 \frac{a^n b}{l^{n+1}}
        .
    \end{equation*}
    
    \flushright\qed
\end{proof}

\subsection{Approximation error} \label{subsec: algorithms: approximation error}

The approximation error $\| B - A \|$ can be expressed using $A$, $\delta$, $\omega$ and $p$ as shown in the next theorem where it is also proved that the approximation error is bounded. For that, it is first demonstrated that $\delta$ is bounded.

\begin{lemma} \label{lemma: algorithms: bound delta}
    Let $l \geq 0$. Then
    \begin{equation*}
    \begin{gathered}
        |\delta_i| \leq a + b
        \text{ for all }
        i \in \{1, \ldots, n\}\\
        \text{ with }
        a := \max\limits_{i=1,\ldots,n} y_i
        \text{ and }
        b := \max\limits_{i=1,\ldots,n} |A_{ii}|
        .
    \end{gathered}
    \end{equation*}
\end{lemma}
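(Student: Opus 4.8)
The plan is to start from the explicit formula for $\delta_i$ provided by the algorithm, namely \autoref{line: alg: approximated decomposition: delta definition}, which gives $\delta_{p_i} = d_i + \omega_{p_i}^2 \alpha_{p_i} - A_{p_i p_i}$ for each iteration $i$; equivalently, after reindexing via Lemma \ref{lemma: approximated LDLH decomposition: p uniqueness}, for every $k \in \{1, \ldots, n\}$ there is a unique $i$ with $p_i = k$ and $\delta_k = (d_i + \omega_{p_i}^2 \alpha_{p_i}) - A_{kk}$. The idea is then to bound the two pieces separately: the term $d_i + \omega_{p_i}^2 \alpha_{p_i}$ from above and below, and $|A_{kk}|$ trivially by $b$.

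First I would handle $d_i + \omega_{p_i}^2 \alpha_{p_i}$. For the upper bound, \autoref{line: alg: approximated decomposition: select d and omega} guarantees $d_i + \omega_{p_i}^2 \alpha_{p_i} \in [x_{p_i}, y_{p_i}]$, so in particular $d_i + \omega_{p_i}^2 \alpha_{p_i} \le y_{p_i} \le a$. For the lower bound, I would use that $l \ge 0$: then $d_i \ge l \ge 0$ by Lemma \ref{lemma: approximated LDLH decomposition: d bounds}, and $\alpha_{p_i} \ge 0$ (as already observed inside the proof of Lemma \ref{lemma: approximated LDLH decomposition: d bounds}, since $\alpha$ is a sum of terms $|L_{ji}|^2 d_i$ with $d_i \ge 0$, starting from $0$), and $\omega_{p_i}^2 \ge 0$; hence $d_i + \omega_{p_i}^2 \alpha_{p_i} \ge 0$. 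Combining, $0 \le d_i + \omega_{p_i}^2 \alpha_{p_i} \le a$.

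Putting the pieces together, for every $i$ we get
\begin{equation*}
    \delta_{p_i} = (d_i + \omega_{p_i}^2 \alpha_{p_i}) - A_{p_i p_i},
\end{equation*}
so $|\delta_{p_i}| \le |d_i + \omega_{p_i}^2 \alpha_{p_i}| + |A_{p_i p_i}| \le a + b$, using $|d_i + \omega_{p_i}^2 \alpha_{p_i}| \le a$ (valid because the quantity lies in $[0,a]$) and $|A_{p_i p_i}| \le b$ by definition of $b$. Since $p$ runs over all of $\{1, \ldots, n\}$ by Lemma \ref{lemma: approximated LDLH decomposition: p uniqueness}, and $\delta_{p_i}$, $p_i$ are not changed after iteration $i$, this yields $|\delta_k| \le a + b$ for all $k \in \{1, \ldots, n\}$, as claimed.

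The only mildly delicate point — and the step I expect to require the most care — is justifying $\alpha_{p_i} \ge 0$ at the moment it is used in \autoref{line: alg: approximated decomposition: select d and omega}: one must check that all the $\alpha$-updates in \autoref{line: alg: approximated decomposition: add alpha} that contribute to $\alpha_{p_i}$ by that point add nonnegative quantities, which hinges on $d_k \ge l \ge 0$ for the relevant earlier indices $k$. This is already essentially established in the proof of Lemma \ref{lemma: approximated LDLH decomposition: d bounds}, so I would simply cite that; everything else is a one-line triangle-inequality estimate.
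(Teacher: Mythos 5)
Your proof is correct, but it takes a different route from the paper. The paper bounds $\delta_i$ through the matrix $B$ itself: it invokes Theorem \ref{theorem: algorithms: positive semidefinite} to get $B_{ii} \geq 0$ (which rests on the factorization identity $B = P^T L D L^H P$ of Theorem \ref{theorem: B equals PT L D LH P}), Theorem \ref{theorem: algorithms: approximated matrix: diagonal values} to get $B_{ii} \leq y_i \leq a$, and Lemma \ref{lemma: approximated LDLH decomposition: B equals A modified} to write $\delta_i = B_{ii} - A_{ii}$, finishing with the triangle inequality. You instead stay entirely at the level of the algorithm's internal quantities: $\delta_{p_i} = d_i + \omega_{p_i}^2 \alpha_{p_i} - A_{p_i p_i}$ from \autoref{line: alg: approximated decomposition: delta definition}, the upper bound from the selection constraint $d_i + \omega_{p_i}^2 \alpha_{p_i} \leq y_{p_i}$ in \autoref{line: alg: approximated decomposition: select d and omega}, and the lower bound from $d_i \geq l \geq 0$ together with $\alpha_{p_i} \geq 0$. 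The quantity you bound is of course exactly $B_{p_i p_i}$, so the two arguments are estimating the same number, but yours is more elementary and self-contained: it avoids the heavier representation and positive-semidefiniteness machinery, at the price of having to justify $\alpha \geq 0$ yourself. One presentational caveat: the nonnegativity of $\alpha$ under $l \geq 0$ is only an intermediate observation inside the paper's proof of Lemma \ref{lemma: approximated LDLH decomposition: d bounds}, not part of its statement, so you cannot literally cite that lemma for it; you should either restate the one-line induction (all updates in \autoref{line: alg: approximated decomposition: add alpha} add $|L_{j i}|^2 d_i \geq 0$ to entries initialized to $0$ in \autoref{line: alg: approximated decomposition: init alpha}) or promote it to a separate claim. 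With that small repair the argument is complete.
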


\begin{proof}
    Let $i \in \{1, \ldots, n\}$. $B$ is positive semidefinite due to Theorem \ref{theorem: algorithms: positive semidefinite} since $l \geq 0$. Hence,
    \begin{equation*}
        0
        \leq
        B_{i i}
        \text{ and }
        B_{i i}
        \leq
        y_i
        \leq
        a
    \end{equation*}
    due to Theorem \ref{theorem: algorithms: approximated matrix: diagonal values}. Furthermore
    \begin{equation*}
        B_{i i}
        =
        A_{i i} + \delta_i
    \end{equation*}
    due to Lemma \ref{lemma: approximated LDLH decomposition: B equals A modified}. Thus
    \begin{equation*}
        | \delta_i |
        =
        |B_{i i} - A_{i i}|
        \leq
        |B_{i i}| + |A_{i i}|
        \leq
        a + b
        .
    \end{equation*}
    
    \flushright\qed
\end{proof}

\begin{theorem} \label{theorem: algorithms: approximation error}
    Let $l > 0$ or otherwise $d_i = 0$ imply $\omega_j = 0$ for all $i, j \in \{1, \ldots, n\}$ with $j \geq i$.
    Define $E := B - A$. Then
    \begin{equation*}
    \begin{aligned}
        \|E\|_2
        &\leq
        \|E\|_1
        =
        \|E\|_\infty
        \\&=
        \max\limits_{i = 1, \ldots, n} \left(
        |\delta_{p_i}| + (1 - \omega_{p_i}) \sum\limits_{j=1}^{i-1} |A_{p_i p_j}| + \sum\limits_{j=i+1}^n (1 - \omega_{p_j}) |A_{p_i p_j}|
        \right)    
        \\&\leq
        a + b + (n - 1) c
    \end{aligned}
    \end{equation*}
    and
    \begin{equation*}
    \begin{aligned}
        \|E\|_F^2
        &=
        \sum\limits_{i=1}^n 
        \left(
        \delta_{p_i}^2
        +
        2 (1 - \omega_{p_i})^2 \sum\limits_{j=1}^{i-1} |A_{p_i p_j}|^2
        \right)
        \\&\leq
        n ((a + b)^2 + (n - 1) c^2)
    \end{aligned}
    \end{equation*}
    with
    \begin{equation*}
        a := \max\limits_{i=1,\ldots,n} y_i,
        b := \max\limits_{i=1,\ldots,n} |A_{ii}|
        \text{ and }
        c := \max\limits_{i,j=1,\ldots,n; i \neq j} |A_{ij}|
        .
    \end{equation*}
\end{theorem}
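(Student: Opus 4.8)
### Proof Proposal

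The plan is to compute $E = B - A$ entrywise using Lemma~\ref{lemma: approximated LDLH decomposition: B equals A modified}, then translate those entrywise formulas into the claimed norm identities and bounds. First I would handle the diagonal: by Lemma~\ref{lemma: approximated LDLH decomposition: B equals A modified} we have $B_{ii} = A_{ii} + \delta_i$, so $E_{ii} = \delta_i$. For the off-diagonal entries, the hypothesis of the theorem (either $l > 0$, or $d_i = 0 \Rightarrow \omega_j = 0$ for $j \geq i$) is exactly what is needed to guarantee that the condition ``$d_{q_{a(i,j)}} \neq 0$ or $\omega_{b(i,j)} = 0$'' in Lemma~\ref{lemma: approximated LDLH decomposition: B equals A modified} always holds: if $l > 0$ then Lemma~\ref{lemma: approximated LDLH decomposition: d bounds} gives $d_i \neq 0$ for all $i$; otherwise, writing $a = a(i,j)$ and $b = b(i,j)$, one has $q_a \leq q_b$ (since $a$ is the index with smaller $q$-value), hence $p_{q_a} = $ the row index at position $q_a$ precedes $p_{q_b}$, and the implication $d_{q_a} = 0 \Rightarrow \omega_{b} = 0$ follows from the hypothesis applied with the positions $q_a \leq q_b$. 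So in all cases $B_{ij} = A_{ij}\omega_{b(i,j)}$, giving $E_{ij} = (\omega_{b(i,j)} - 1) A_{ij} = -(1 - \omega_{b(i,j)}) A_{ij}$ for $i \neq j$.

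Next I would rewrite these formulas in terms of the permutation $p$ to match the statement. Since $E$ is Hermitian (both $A$ and $B$ are), $\|E\|_1 = \|E\|_\infty$ automatically, and $\|E\|_2 \leq \|E\|_1$ is the standard norm inequality for Hermitian matrices (indeed $\|E\|_2 \leq \sqrt{\|E\|_1 \|E\|_\infty} = \|E\|_\infty$). For $\|E\|_\infty$ I would compute the $p_i$-th absolute row sum: it equals $|E_{p_i p_i}| + \sum_{j \neq i} |E_{p_i p_j}| = |\delta_{p_i}| + \sum_{j \neq i} (1 - \omega_{b(p_i, p_j)}) |A_{p_i p_j}|$, and since $b(p_i, p_j) = p_i$ when $q_{p_i} > q_{p_j}$ i.e. $i > j$, and $= p_j$ otherwise, this splits into $\sum_{j < i} (1 - \omega_{p_i})|A_{p_i p_j}| + \sum_{j > i} (1 - \omega_{p_j})|A_{p_i p_j}|$, exactly the claimed expression. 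Taking the maximum over all rows (reindexed by $p$, which is a bijection by Lemma~\ref{lemma: approximated LDLH decomposition: p uniqueness}) yields the identity. The upper bound $a + b + (n-1)c$ then follows by bounding $|\delta_{p_i}| \leq a + b$ via Lemma~\ref{lemma: algorithms: bound delta} (valid since $l \geq 0$ in either case of the hypothesis — if $l > 0$ then $l \geq 0$; if not, the second case still needs $l \geq 0$, which I should note is implicit or check), bounding each $|A_{p_i p_j}| \leq c$ for off-diagonal terms, $1 - \omega \leq 1$, and counting $i - 1 + (n - i) = n - 1$ off-diagonal terms per row.

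For the Frobenius norm, $\|E\|_F^2 = \sum_{i,j} |E_{ij}|^2 = \sum_i |E_{ii}|^2 + \sum_{i \neq j} |E_{ij}|^2$. Reindexing the first sum by $p$ gives $\sum_i \delta_{p_i}^2$. For the off-diagonal part, I would group the pair $(p_i, p_j)$ and $(p_j, p_i)$ together: for $i > j$ both contribute $(1 - \omega_{p_i})^2 |A_{p_i p_j}|^2$ (using $|A_{p_j p_i}| = |A_{p_i p_j}|$ since $A$ is Hermitian and $b(p_i,p_j) = b(p_j,p_i) = p_i$), so the total off-diagonal contribution is $\sum_i 2(1-\omega_{p_i})^2 \sum_{j < i} |A_{p_i p_j}|^2$, matching the stated identity. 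The final bound $n((a+b)^2 + (n-1)c^2)$ follows by bounding each $\delta_{p_i}^2 \leq (a+b)^2$, each $|A_{p_i p_j}|^2 \leq c^2$, $(1-\omega)^2 \leq 1$, noting $2\sum_{j<i} 1 \leq$ ... here I should be careful: the crude per-row count is that $\sum_{j<i} |A_{p_i p_j}|^2 + \sum_{j > i} |A_{p_j p_i}|^2$ over all rows double-counts the $\frac{n(n-1)}{2}$ ordered-below pairs, but the stated bound $n(a+b)^2 + n(n-1)c^2$ comes from: $n$ diagonal terms each $\leq (a+b)^2$, plus $\le n(n-1)$ off-diagonal terms (all ordered pairs $i \neq j$) each $\leq c^2$; the factor-2-and-half-the-pairs in the identity reconciles to exactly $n(n-1)$ off-diagonal terms total. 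The main obstacle I anticipate is not any single hard step but the bookkeeping: correctly tracking which of $i,j$ plays the role of $a$ versus $b$ under the permutation, verifying the hypothesis genuinely forces the ``omega'' branch of Lemma~\ref{lemma: approximated LDLH decomposition: B equals A modified} in the degenerate $d = 0$ case, and making sure the Frobenius grouping of conjugate pairs produces precisely the factor $2$ and index range $j \in \{1,\dots,i-1\}$ stated.
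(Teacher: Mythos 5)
Your proposal is correct and follows essentially the same route as the paper: obtain the entrywise formulas $E_{ii}=\delta_i$ and $E_{ij}=(\omega-1)A_{ij}$ from the representation results, use that $E$ is Hermitian to get $\|E\|_2\leq\|E\|_1=\|E\|_\infty$, compute the permuted row sums and the conjugate-paired Frobenius sum, and bound $|\delta_i|$ via Lemma~\ref{lemma: algorithms: bound delta}. The only differences are cosmetic — you cite Lemma~\ref{lemma: approximated LDLH decomposition: B equals A modified} where the paper combines Lemma~\ref{lemma: approximated LDLH decomposition: decomposition equation} with Theorem~\ref{theorem: B equals PT L D LH P}, and you are somewhat more explicit than the paper in checking that the theorem's hypothesis forces the ``$\omega$'' branch of that lemma and in flagging the $l\geq 0$ requirement of Lemma~\ref{lemma: algorithms: bound delta}.
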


\begin{proof}        
    Let $i, j \in \{1, \ldots, n\}$. Lemma \ref{lemma: approximated LDLH decomposition: decomposition equation} and Theorem \ref{theorem: B equals PT L D LH P} imply
    \begin{equation*}
    \begin{aligned}
        B_{p_i p_j}
        =
        \begin{cases}
        A_{p_i p_j} \omega_{p_{\max\{i, j\}}} &\text{ if } i \neq j\\
        A_{p_i p_i} + \delta_{p_i} &\text{ else}
        \end{cases}
        .
    \end{aligned}
    \end{equation*}
    Thus, 
    \begin{equation*}
    \begin{aligned}
        E_{p_i p_j}
        =
        \begin{cases}
        (\omega_{p_{\max\{i, j\}}} - 1) A_{p_i p_j}  &\text{ if } i \neq j\\
        \delta_{p_i} &\text{ else}
        \end{cases}
        .
    \end{aligned}
    \end{equation*}
    Furthermore 
    \begin{equation*}
        \{p_i ~|~ i \in \{1, \ldots, n\}\}
        =
        \{1, \ldots, n\}
    \end{equation*}
    due to Lemma \ref{lemma: approximated LDLH decomposition: p uniqueness}.
    Hence, $E$ is Hermitian because $A$ is Hermitian. Thus, the properties of the norms imply
    \begin{equation*}
        \|E\|_2
        \leq
        \|E\|_1
        =
        \|E\|_\infty
        .
    \end{equation*}  
    Moreover
    \begin{equation*}
    \begin{aligned}
        \|E\|_\infty
        &=
        \max\limits_{i = 1, \ldots, n} \sum\limits_{j = 1}^{n} |E_{p_i p_j}|
        =
        \max\limits_{i = 1, \ldots, n} \left(
        |E_{p_i p_i}| + \sum\limits_{j=1}^{i-1} |E_{p_i p_j}| + \sum\limits_{j=i+1}^n |E_{p_i p_j}|
        \right)
        \\&=
        \max\limits_{i = 1, \ldots, n} \left(
        |\delta_{p_i}| + (1 - \omega_{p_i}) \sum\limits_{j=1}^{i-1} |A_{p_i p_j}| + \sum\limits_{j=i+1}^n (1 - \omega_{p_j}) |A_{p_i p_j}|
        \right)    
        \\&\leq
        a + b + (n - 1) c   
    \end{aligned}
    \end{equation*}
    because $|\delta_i| \leq a + b$ and $\omega_i \in [0, 1]$
due to Lemma \ref{lemma: algorithms: bound delta} and line \ref{line: alg: approximated decomposition: select d and omega} in \nameref{alg: approximated decomposition}. Additionally    
    \begin{align*}
        \|E\|_F^2
        &=
        \sum\limits_{i=1}^n
        \left(
        |E_{p_i p_i}|^2
        +
        2 \sum\limits_{j=1}^{i-1} |E_{p_i p_j}|^2
        \right)
        \\&=
        \sum\limits_{i=1}^n
        \left(
        \delta_{p_i}^2
        +
        2 (1- \omega_{p_i})^2
        \sum\limits_{j=1}^{i-1} | A_{p_i p_j}|^2
        \right)
        \\&\leq
        n (a + b)^2 + n (n - 1) c^2
        .
    \end{align*}
    
    \flushright\qed
\end{proof}

\subsection{Choice of $\omega$ and $d$}  \label{subsec: algorithms: choice of d and omega}

The choice of $\omega$ and $d$ in \autoref{line: alg: approximated decomposition: select d and omega} in \nameref{alg: approximated decomposition} is arbitrary apart from that they must be feasible. However, their choice is crucial for the approximation error due to Theorem \ref{theorem: algorithms: approximation error} and line \ref{line: alg: approximated decomposition: delta definition} of \nameref{alg: approximated decomposition}.

Based on this theorem the algorithm \nameref{alg: minimal change}, presented in Algorithm \ref{alg: minimal change}, is derived which chooses $\omega$ and $d$ so that in each iteration the additional approximation error in the Frobenius norm is minimized. This does not guaranteed that the overall approximation error is minimized but still results in a small approximation error as numerical tests in Subsection \ref{subsec: numerical comparison} have shown. Hence, \nameref{alg: approximated matrix} meets objective \ref{objective: small error} when using \nameref{alg: minimal change}. It can be incorporated by replacing \autoref{line: alg: approximated decomposition: select d and omega} in \nameref{alg: approximated decomposition} with the code snippet \nameref{alg: approximated decomposition extension: minimal change} presented in Algorithm \ref{alg: approximated decomposition extension: minimal change}.

\nameref{alg: minimal change} was designed so that its needed number of basic operations and memory is negligible compared to the number of basic operations and memory needed by \nameref{alg: approximated matrix} as discussed in Subsection \ref{subsec: algorithms: complexity}. This makes it possible to meet objectives \ref{objective: fast computation} and \ref{objective: low memory} while using \nameref{alg: minimal change}.

It also ensures that $B = A$ if $A$ already meets the requirements on $B$. In detail, these are $x_i \leq A_{ii} \leq y_i$ and $\max \{l, \epsilon\} \leq D_{ii} \leq u$ for all $i \in \{1, \ldots, n\}$, where $D$ is the diagonal matrix of the $LDL^H$ decomposition of $P A P^T$.

If several pairs $(d, \omega)$ minimize the additional approximation error, the one with the biggest $d$ is chosen in \nameref{alg: minimal change}. This results in absolute smaller values in $L$ which reduces the condition number of $B$, as shown in the proof of Theorem \ref{theorem: condition number inequality: positive definite}. Moreover the numerical stability of the algorithms is increased because a division by $d$ is part of the algorithms.

\begin{algorithm}
\normalsize
\caption{MINIMAL\_CHANGE}
\label{alg: minimal change}

\begin{algorithmic}[1]
    \INPUT
    \begin{itemize}[label={$\cdot$}]
        \item $x \in \R \cup \{-\infty\}$, $y, u \in \R \cup \{\infty\}$, $l, \epsilon, \alpha, \beta, \gamma \in \R$ with
            $l, \alpha, \beta \geq 0$, $\epsilon > 0$,
            $\max\{l, \epsilon, x\} \leq \min\{u, y\}$ and
            $\beta = 0 \Rightarrow \alpha = 0$
    \end{itemize}
    
    \OUTPUT
    \begin{itemize}[label={$\cdot$}]
        \item $d, \omega \in \R$
    \end{itemize}

    \Function {minimal\_change}{$x, y, l, u, \epsilon, \alpha, \beta, \gamma$}
        \If {$\max\{l, \epsilon, x - \alpha\} \leq \gamma - \alpha \leq \min\{u, y - \alpha\}$} \label{line: alg: select omega and d: case inner value: if condition}
            \Return $(\gamma - \alpha, 1)$ \label{line: alg: select omega and d: case inner value: return values}
        \EndIf
                    
        \State $C \gets \emptyset$
        
        \If {$\max\{l, \epsilon, x - \alpha\} \leq \min\{u, y - \alpha\}$}
            \State $C \gets  \{(\min\{\max\{l, \epsilon, x - \alpha, \gamma - \alpha\}, u, y - \alpha\}, 1) \}$ \label{line: alg: select omega and d: add candidate: case omega equals 1}
        \EndIf
        \If {$\alpha \neq 0$}
            \For {$d \in (\{\max\{l, \epsilon\} \} \cap [x -a, \infty)) \cup (\{u\} \cap (- \infty, y]))$} 
                \For {$\omega \in \R \text{ with } 2 \alpha^2 \omega^3 + ( 2 \alpha (d - \gamma) + \beta ) \omega - \beta = 0$}
                    \State $\omega \gets 
                            \min \{
                                 \max \{
                                     \omega,
                                     \sqrt{\tfrac{\max\{x - d, 0\}}{\alpha}}
                                 \},
                                 \sqrt{\tfrac{y - d}{\alpha}},
                                 1
                             \}$
                    \State $C \gets C \cup \{(d, \omega)\}$ \label{line: alg: select omega and d: add candidate: omega inner}
                \EndFor
            \EndFor
        \EndIf
            
        \If {$l = 0$ and $x \leq 0$ and $2 \gamma \leq \epsilon$}
            \State $C \gets C \cup \{(0, 0)\}$ \label{line: alg: select omega and d: add candidate: (0, 0)}
        \EndIf
        
        \Return $(d, \omega) \in C$ with smallest $((d + \omega^2 \alpha  - \gamma)^2 + (\omega - 1)^2 \beta, -d, \omega)$ in lexicographical order \label{line: alg: select omega and d: return minimal candidate}
        
    \EndFunction
\end{algorithmic}
\end{algorithm}

\begin{algorithm}[H]
\normalsize
\caption{CHOOSE\_$d$\_$\omega$} 
\label{alg: approximated decomposition extension: minimal change}

\begin{algorithmic}[1]
    \Indent
    \Indent
    
    \For {$k \gets i, \ldots, n$}
    \If {$i = 1$}
        \State $\beta_{p_k} \gets 0$
    \Else
        \State $\beta_{p_k} \gets \beta_{p_k} + 2 |A_{p_k p_{i- 1}}|^2$
    \EndIf
    \EndFor
    
    \State $(d_{p_i}, \omega_{p_i}) \gets $\Call{minimal\_change}{$x_{p_i}, y_{p_i}, l, u, \epsilon, \alpha_{p_i}, \beta_{p_i}, A_{p_i p_i}$} \label{line: alg: approximated decomposition extension: minimal change: select minimum} 
    
    \EndIndent
    \EndIndent
\end{algorithmic}
\end{algorithm}

The next Theorem stats that \nameref{alg: minimal change} chooses feasible $d$ and $\omega$ which minimize in each iteration the additional approximation error.

\begin{theorem} \label{theorem: optimal omega and d}
    Let
    \begin{equation*}
        d, \omega := \text{\normalfont \nameref{alg: minimal change}}(x, y, l, u, \epsilon, \alpha, \beta, \gamma)
    \end{equation*}
    where $(x, y, l, u, \epsilon, \alpha, \beta, \gamma)$ is some valid input for the algorithm. Let
    \begin{equation*}
        \Phi_* :=
        \{
        (d, \omega)
        ~|~
        d \in [\max\{l, \epsilon\}, u],
        \omega \in [0, 1],
        d + \omega^2 \alpha \in [x, y]
        \}
        ,
    \end{equation*}
    \begin{equation*}
        \Phi_0 :=
        \begin{cases}
            \{(0, 0)\} \text{ if } \max\{l, x\} \leq 0\\
            \emptyset \text{ else}
        \end{cases}
        ,
        \Phi := \Phi_* \cup \Phi_0
    \end{equation*}
    and
    \begin{equation*}
        \Psi :=
        \{
        (d, \omega) \in \Phi
        ~|~
        f(d,\omega)
        =
        \min_{(\hat{d}, \hat{\omega}) \in \Phi} f(\hat{d}, \hat{\omega})
        \}
    \end{equation*}   
    with $f: \R^2 \to \R, (d, \omega) \mapsto (d + \omega^2 \alpha - \gamma)^2 + (\omega - 1)^2 \beta$. Then $(d, \omega) \in \Psi$.
\end{theorem}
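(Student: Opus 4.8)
The plan is to prove that the returned pair $(d,\omega)$ lies in $\Phi$ and attains $\min_{\Phi}f$; together these give $(d,\omega)\in\Psi$. The device I would use throughout is the substitution $s:=\omega^2$. Under it, $\Phi_*$ is the image of the polytope $\Pi:=\{(d,s)\mid d\in[\max\{l,\epsilon\},u],\ s\in[0,1],\ d+s\alpha\in[x,y]\}$ and $f(d,\omega)=\tilde f(d,s)$ with $\tilde f(d,s):=(d+s\alpha-\gamma)^2+\beta(\sqrt s-1)^2$. Since $(d,s)\mapsto(d+s\alpha-\gamma)^2$ is convex and $s\mapsto(\sqrt s-1)^2$ has second derivative $\tfrac12 s^{-3/2}\ge 0$, the map $\tilde f$ is convex on $\{s\ge 0\}$. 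Also $f\ge 0$ with $f(\gamma-\alpha,1)=0$, and $\Phi\neq\emptyset$ (it contains $(\max\{l,\epsilon,x\},0)$, using $\max\{l,\epsilon,x\}\le\min\{u,y\}$), so $\min_{\Phi}f$ exists and is $\ge 0$.

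First I would dispatch the easy case. If the test on \autoref{line: alg: select omega and d: case inner value: if condition} holds, the algorithm returns $(\gamma-\alpha,1)$, and that test says precisely $\gamma-\alpha\in[\max\{l,\epsilon\},u]$ and $\gamma\in[x,y]$, i.e.\ $(\gamma-\alpha,1)\in\Phi_*\subseteq\Phi$ with $f$-value $0=\min_{\Phi}f$; hence $(\gamma-\alpha,1)\in\Psi$. So assume the test fails. Then the algorithm returns the lexicographically $(f,-d,\omega)$-smallest element of the finite set $C$, so it suffices to show (i) $C\subseteq\Phi$ and (ii) $C$ contains a minimizer of $f$ over $\Phi$; given these, the returned element has $f$-value $\le\min_{\Phi}f$ (it is the $C$-minimum, and $C$ meets $\Psi$) and $\ge\min_{\Phi}f$ (it lies in $\Phi$), hence lies in $\Psi$. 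Part (i) I would obtain by inspecting the three insertions: on \autoref{line: alg: select omega and d: add candidate: case omega equals 1} the candidate is $\gamma-\alpha$ projected onto $\{d\mid(d,1)\in\Phi_*\}=[\max\{l,\epsilon,x-\alpha\},\min\{u,y-\alpha\}]$, reached only under the guard that keeps that interval nonempty; on \autoref{line: alg: select omega and d: add candidate: omega inner}, $d\in\{\max\{l,\epsilon\},u\}$ is restricted to the values for which $\{\omega\in[0,1]\mid d+\omega^2\alpha\in[x,y]\}\neq\emptyset$ (this is what the set intersections encode) and $\omega$ is then projected onto that interval; and on \autoref{line: alg: select omega and d: add candidate: (0, 0)} the guard ``$l=0$ and $x\le 0$'' is $\max\{l,x\}\le 0$, so the inserted point $(0,0)$ lies in $\Phi_0\subseteq\Phi$.

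The substance is part (ii), and here I would argue as follows. Because $\tilde f$ is convex on $\Pi$ and the test of \autoref{line: alg: select omega and d: case inner value: if condition} failed, the minimum of $f$ over $\Phi$ is attained either at the isolated point $(0,0)\in\Phi_0$ or on a facet of $\Pi$, namely one of $\{d=\max\{l,\epsilon\}\}$, $\{d=u\}$, $\{s=1\}$, $\{s=0\}$, $\{d+s\alpha=x\}$, $\{d+s\alpha=y\}$, which I would treat in turn. On $\{s=1\}$, $\tilde f$ is $d\mapsto(d+\alpha-\gamma)^2$, whose constrained minimizer is exactly the $\omega=1$ candidate. On $\{d=\max\{l,\epsilon\}\}$ and $\{d=u\}$, $s\mapsto\tilde f(d,s)$ is convex on $s>0$ and its critical points are exactly the positive roots of $2\alpha^2\omega^3+(2\alpha(d-\gamma)+\beta)\omega-\beta=0$, since $\partial f/\partial\omega=2\omega\,\partial\tilde f/\partial s$; so the constrained minimum on such a facet is such a root projected onto the feasible $\omega$-interval, which is exactly a cubic candidate. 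On $\{d+s\alpha=x\}$ and $\{d+s\alpha=y\}$, $\tilde f=(\text{const})+\beta(\sqrt s-1)^2$ is strictly decreasing in $s$ when $\beta>0$ (when $\beta=0$ we have $\alpha=0$ and these facets degenerate), so its minimum there sits at the largest-$s$ vertex, which lies on $\{s=1\}$ or on $\{d=\max\{l,\epsilon\}\}$ and is already covered. On $\{s=0\}$ with $\alpha\neq 0$ (so $\beta>0$), $\partial\tilde f/\partial s\to-\infty$ as $s\to0^+$, so $\tilde f$ strictly decreases on leaving this facet and the minimum is not interior to it; the only vertex of $\{s=0\}$ not already covered is $(0,0)$, and comparing $f(0,0)=\gamma^2+\beta$ with the value of the $\omega=1$ candidate shows that $(0,0)$ can be a minimizer only if $2\gamma\le\epsilon$, which is exactly the extra guard on \autoref{line: alg: select omega and d: add candidate: (0, 0)}. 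Finally, when $\alpha=0$, $\tilde f(d,s)=(d-\gamma)^2+\beta(\sqrt s-1)^2$ separates, its minimizer has $\omega=1$ with $d$ projected onto $[\max\{l,\epsilon,x\},\min\{u,y\}]$, and this is again the $\omega=1$ candidate. Thus $C$ always meets $\Psi$.

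I expect part (ii) to be the main obstacle: showing that a list $C$ that only ever fixes $d$ to $\max\{l,\epsilon\}$ or $u$ and $\omega$ to $0$ or $1$ nonetheless always contains a true global minimizer. This rests on three ingredients: (a) the convexity reformulation in $(d,\omega^2)$, which lets one localize the minimum on a facet and then reduce to one-dimensional convex problems; (b) the boundary behaviour of $\tilde f$ — the $s^{-1/2}$ singularity that forbids $\omega=0$ at an optimum, and the $s$-monotonicity along the diagonal-bound facets that pushes their minima to already-covered vertices; and (c) the careful but routine verification that the loop ranges and the clampings in \nameref{alg: minimal change} reproduce exactly the feasible sub-interval of each facet, including the degenerate configurations (empty or singleton facets, forced values of $d$ or $\omega$), each of which must be matched against the corresponding guard.
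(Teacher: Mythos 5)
Your plan is essentially sound and ends up doing the same case analysis as the paper, but it is organized along a genuinely different axis. The paper argues directly in the $(d,\omega)$ variables: it splits $\Psi$ into interior, boundary and $\Phi_0$ parts, uses the first-order condition $\nabla f=0$ for the interior case, and treats each boundary piece ($\omega=1$, $d+\omega^2\alpha\in\{x,y\}$, $d\in\{\max\{l,\epsilon\},u\}$, $\omega=0$) by ad hoc monotonicity and projection arguments; in particular the diagonal constraints are handled by reducing to $d\in\{c-\alpha,\max\{l,\epsilon\},u\}$. You instead substitute $s=\omega^2$, note that $\tilde f$ is convex on the polytope $\Pi$, and then solve one-dimensional convex subproblems on each facet, pushing the diagonal facets to their largest-$s$ vertex (which lands on an already covered facet). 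That buys a more systematic localization and makes the clamping steps of \nameref{alg: minimal change} appear as genuine projections of one-dimensional minimizers; you also state explicitly that $C\subseteq\Phi$, which the paper uses but never verifies. The paper's route avoids the change of variables and the convexity machinery at the price of more case-by-case reasoning; the mathematical content (cubic roots as stationary points, the $\omega=1$ projection candidate, the guard $2\gamma\le\epsilon$ for $(0,0)$) is the same.

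Two steps need repair, both local. First, "convex and the test failed, hence the minimum lies on a facet" is not a consequence of convexity alone: you need the stationarity argument of the paper's interior case, namely that an interior stationary point forces $d+\omega^2\alpha=\gamma$ and, if $\beta\neq0$, $\omega=1$ (a boundary point), while $\beta=0$ forces $\alpha=0$ by the input requirement, in which case any interior minimizer makes $(\gamma-\alpha,1)$ feasible and the initial test would have passed. Second, and more substantively, in the $\Phi_0$ case your comparison point is wrong: from $(0,0)\in\Psi$ and a comparison with the $\omega=1$ candidate you only get $\gamma^2+\beta\le(d^*+\alpha-\gamma)^2$, which does not imply $2\gamma\le\epsilon$ (for large $\alpha$ the right-hand side is huge and the inequality is vacuous). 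The correct comparison, as in the paper, is with the feasible point $(\max\{l,\epsilon,x\},0)=(\epsilon,0)\in\Phi_*$, which gives $\gamma^2+\beta\le(\epsilon-\gamma)^2+\beta$ and hence $2\gamma\le\epsilon$, matching the guard on \autoref{line: alg: select omega and d: add candidate: (0, 0)}. With these two fixes your argument goes through.
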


\begin{proof}
    $\Phi$ is compact and $f$ is continuous. Thus, $f$ has a minimum on $\Phi$ due to Weierstrass's theorem \cite[Theorem 4.16]{Rudin1976}. Hence, $\Psi \neq \emptyset$ and thus,
    \begin{equation} \label{eq: theorem: optimal omega and d: psi nonempty sets}
        \Psi \cap \Phi_*^\circ \neq \emptyset
        \text{ or }
        \Psi \cap \partial \Phi_* \neq \emptyset
        \text{ or }
        \Psi \cap \Phi_0 \neq \emptyset
    \end{equation}
    where $\Phi_*^\circ$ denotes the interior of $\Phi_*$ and $\partial \Phi_*$ its boundary. Next these three cases are considered.
        
    First consider the case that $\Psi \cap \Phi_*^\circ \neq \emptyset$. Then
    \begin{equation*}
        \nabla f(d, \omega) = 0
        \text{ for all } 
        (d, \omega) \in \Psi \cap \Phi_*^\circ
    \end{equation*}
    due to \cite[Theorem 12.3]{Nocedal2006}. Furthermore
    \begin{equation*}
        \nabla f(d, \omega)
        =
        \left(
        \begin{array}{c}
        2 (d + \omega^2 \alpha - \gamma)\\
        4 \alpha \omega (d + \omega^2 \alpha - \gamma) + 2 \beta (\omega - 1)
        \end{array}
        \right)
    \end{equation*}
    for all $(d, \omega) \in \Phi_*^\circ$. This implies
    \begin{equation*}
        \omega = 1
        \text{ and }
        d = \gamma - \alpha
        \text{ for all }
        (d, \omega) \in \Psi \cap \Phi_*^\circ
        \text{ if }
        \beta \neq 0
        .
    \end{equation*}
    If $\beta = 0$, the algorithm requires $\alpha = 0$, which implies
    \begin{equation*}
        (\gamma - \alpha, 1) \in \Psi
        \text{ if }
        \Psi \cap \Phi_*^\circ \neq \emptyset
        \text{ and }
        \beta = 0
        .
    \end{equation*}
    Hence,
    \begin{equation*}
        (\gamma - \alpha, 1) \in \Psi
        \text{ if }
        \Psi \cap \Phi_*^\circ \neq \emptyset
        .
    \end{equation*}
    Thus, $\Psi \cap \Phi_*^\circ \neq \emptyset$ implies $(\gamma - \alpha, 1) \in \Phi_*$. Hence, $(\gamma - \alpha, 1)$ is returned by the algorithm in \autoref{line: alg: select omega and d: case inner value: return values} if $\Psi \cap \Phi_*^\circ \neq \emptyset$.
    
    If $\Psi \cap \Phi_*^\circ = \emptyset$, the algorithm constructs a candidate set $C$ and returns a minimizer of $f$ on $C$ in \autoref{line: alg: select omega and d: return minimal candidate}. Hence, it remains to prove that
    \begin{equation*}
        C \cap \Psi \neq \emptyset
        \text{ if }
        \Psi \cap \partial \Phi_* \neq \emptyset
        \text{ or }
        \Psi \cap \Phi_0 \neq \emptyset
        .
    \end{equation*}
    
    Consider now the case $\Psi \cap \partial \Phi_* \neq \emptyset$. Let $(d, \omega) \in \Psi \cap \partial \Phi_*$ and define $a := \max\{l, \epsilon\}$. Then
    \begin{equation*}
        d \in \{a, u\}
        \text{ or }
        d + \omega^2 \alpha \in \{x, y\}
        \text{ or }
        \omega \in \{0, 1\}
        .
    \end{equation*}
    
    If $\omega = 1$, the definitions of $f$ and $\Phi_*$ imply
    \begin{equation*}
        \max\{a, x - \alpha\} \leq \min\{u, y - \alpha\}
    \end{equation*}
    and
    \begin{equation*}
        (d, \omega)
        =
        (\min\{\max\{a, x - \alpha, \gamma - \alpha\}, u, y - \alpha\}, 1)
        .
    \end{equation*}
    This value is included in $C$ at \autoref{line: alg: select omega and d: add candidate: case omega equals 1}.
    
    If $\alpha = 0$, $(d, \omega) \in \Psi$ implies $(d, 1) \in \Psi$ for all $(d, \omega) \in \Phi$. Hence, the case $\alpha = 0$ is covered by the previous case where $\omega = 1$. Thus, assume $\alpha \neq 0$.
    
    If $d + \omega^2 \alpha = c$ for $c \in \{x, y\}$, $d \leq c$ and $\omega = \sqrt{\tfrac{c - d}{\alpha}}$.
    Since
    \begin{equation*}
        f \left( d, \sqrt{\tfrac{c - d}{\alpha}} \right)
        =
        (c - \gamma)^2 + \left( \sqrt{\tfrac{c - d}{\alpha}} - 1 \right)^2 \beta
    \end{equation*}
    and $(d, \omega)$ is a minimizer of $f$ on $\Psi$, it follows 
    \begin{equation*}
        d \in \{c - \alpha, a, u\}
        \text{ if }
        d + \omega^2 \alpha = c
        .
    \end{equation*}
    $d = c - \alpha$ any $d + \omega^2 \alpha = c$ imply $\omega = 1$. The case $\omega = 1$ was already considered.
        
    The case $d \in \{a, u\}$ is considered now. Then $(d, \omega) \in \Phi$ is equivalent to $\omega \in [\check{\omega}_d, \hat{\omega}_d]$ with
    \begin{equation*}
        \check{\omega}_d
        := 
        \sqrt{\tfrac{\max\{x - d, 0\}}{\alpha}},~~~
        \hat{\omega}_d
        := 
        \sqrt{\tfrac{\min\{y - d, \alpha\}}{\alpha}}
        .
    \end{equation*}
    Hence, $d = u$ implies $y \geq u$ and $d = a$ implies $x - \alpha \leq a$.
    Define
    \begin{equation*}
        \Omega_d := \{ \omega \in \R ~|~ \tfrac{\partial}{\partial \omega} f(d, \omega) = 0 \}
        .
    \end{equation*}
    $\omega \in (\check{\omega}_d, \hat{\omega}_d)$ implies $\omega \in \Omega_d$. $\omega = \check{\omega}_d$ implies $\min \Omega_d \leq \check{\omega}_d$ and $\omega = \hat{\omega}_d$ implies $\max \Omega_d \geq \hat{\omega}_d$. Hence
    \begin{equation*}
        \omega
        \in
        \{ \min\{\max\{\omega, \check{\omega}_d\}, \hat{\omega}_d \} \} ~|~ \omega \in \Omega_d \}
    \end{equation*}
    These values are included in $C$ in \autoref{line: alg: select omega and d: add candidate: omega inner}.
    
    The last case is $\omega = 0$. This implies $d = a$, because $(d, \omega)$ is a minimizer of $f$ on $\Phi$. The case $d = a$ was already considered.
    
    Hence,
    \begin{equation*}
        C \cap \Psi \neq \emptyset
        \text{ if }
        \Psi \cap \partial \Phi_* \neq \emptyset
        .
    \end{equation*}
    
    Thus, it remains to show that 
    $
        C \cap \Psi \neq \emptyset
        \text{ if }
        \Psi \cap \Phi_0 \neq \emptyset
        .
    $
    Hence, consider now the case $\Psi \cap \Phi_0 \neq \emptyset$. The definition of $\Phi_0$ implies then $(0, 0) \in \Psi$ and $\max\{l, x\} \leq 0$. This implies $\epsilon \leq u, y$ due to the requirements of the algorithm. Thus, $(\epsilon, 0) \in \Phi$. Hence, since $(0, 0) \in \Psi$,
    \begin{align*}
        \gamma^2 + \beta
        =
        f(0, 0)
        \leq
        f(\epsilon, 0)
        =
        (\epsilon - \gamma)^2 + \beta
        =
        \gamma^2 - 2 \epsilon \gamma + \epsilon^2 + \beta
        .
    \end{align*}
    Thus, $\Psi \cap \Phi_0 \neq \emptyset$ implies $2 \gamma \leq \epsilon$. $(0, 0)$ is included in $C$ in \autoref{line: alg: select omega and d: add candidate: (0, 0)} in this case. Hence
    \begin{equation*}
        C \cap \Psi \neq \emptyset
    \end{equation*}
    and the algorithm returns a value in $\Psi$ in all cases.
    
    \flushright\qed
\end{proof}

\subsection{Permutation} \label{subsec: algorithms: permutation}

Another part of \nameref{alg: approximated decomposition} with some flexibility in its design is the permutation step in \autoref{line: alg: approximated decomposition: permutation} where the row and column for the current iteration are chosen. This choice drastically affects the output of \nameref{alg: approximated decomposition} and thus of \nameref{alg: approximated matrix}, too. Several strategies for the permutation are conceivable.

A strategy to reduce the approximation error is to choose the permutation that minimizes the additional approximation error. To achieve this, in each iteration the additional approximation error for all remaining indices is computed and the one with the lowest additional approximation error is chosen. If this is the same for several indices, a higher value in $d$ is preferred. As already stated in the previous subsection, this reduces the condition number of the approximation and increases the numerical stability. If these values are the same as well, a lower $\omega$ and then a lower index is preferred.

Another strategy is to prioritize higher values in $d$ instead of lower additional approximation errors. This improves the condition number and the numerical stability even further and does not necessarily increase the total approximation error as numerical experiments have shown. 

To use this strategy, \autoref{line: alg: approximated decomposition extension: minimal change: select minimum} in \nameref{alg: approximated decomposition extension: minimal change} can be replaced by the following code snippet \nameref{alg: approximated decomposition extension: minimal change with permutation: max d} presented in Algorithm \ref{alg: approximated decomposition extension: minimal change with permutation: max d}. Furthermore in \nameref{alg: approximated decomposition} , the swap in \autoref{line: alg: approximated decomposition: swap} has to be moved after \nameref{alg: approximated decomposition extension: minimal change with permutation: max d} and \autoref{line: alg: approximated decomposition: permutation} could be removed.

\begin{algorithm}[H]
\normalsize
\caption{CHOOSE\_$p$\_$d$\_$\omega$} 
\label{alg: approximated decomposition extension: minimal change with permutation: max d}

\begin{algorithmic}[1]
    \Indent
    \Indent
    
    \State $\hat{d} \gets -\infty$        
    \For {$k \gets i, \ldots, n$}            
        \State $(\tilde{d}, \tilde{\omega}) \gets$ \Call{minimal\_change}{$x_{p_{k}}, y_{p_{k}}, l, u, \epsilon, \alpha_{p_{k}}, \beta_{p_{k}}, A_{p_{k} p_{k}}$}
        \State $\tilde{f} \gets (\tilde{d} + \tilde{\omega}^2 \alpha_{p_{k}} - A_{p_{k} p_{k}})^2 + (\tilde{\omega} - 1)^2 \beta_{p_{k}}$ 
         \If {$(-\tilde{d}, \tilde{f}, \tilde{\omega}, k) < (-\hat{d}, \hat{f}, \hat{\omega}, j)$ in lexicographical order}
            \State $j \gets k$, $\hat{d} \gets \tilde{d}$, $\hat{\omega} \gets \tilde{\omega}$, $\hat{f} \gets \tilde{f}$  
        \EndIf
    \EndFor
    \State $(d_{p_j}, \omega_{p_j}) \gets (\hat{d}, \hat{\omega})$
    
    \EndIndent
    \EndIndent
\end{algorithmic}
\end{algorithm}

For sparse matrices, the permutation also affects the sparsity pattern of the matrix $L$. Hence, it would be beneficial to choose a permutation which reduces the number of nonzero values in $L$ and thus reduces also the computational effort and the memory consumption. However, minimizing the number of nonzero values is a NP-complete problem \cite{Yannakakis1981}.

However, several heuristic methods exist, which can reduce the number of nonzero values significantly. These are band reducing permutation algorithms like the Cuthill–McKee algorithm \cite{Cuthill1969} and the reverse Cuthill–McKee algorithm \cite{George1981}, symmetric approximate minimum degree permutation algorithms \cite{George1989}, like for example \cite{Amestoy1996}, or symmetric nested dissection algorithms. A good overview is provided by \cite[chapter 7]{Davis2006} and \cite[Chapter 8]{Davis2016}. It should be taken into account that only symmetric permutation methods are applicable in our context.

\subsection{Complexity} \label{subsec: algorithms: complexity}

In the context of large matrices and limited resources, the needed run time and memory of \nameref{alg: approximated matrix} and \nameref{alg: approximated decomposition} are crucial. 

The fastest way to check if $A \in \mathcal{C}^{n \times n}$ is positive definite is to try to calculate a (classical) Cholesky decomposition of $A$, that is a lower triangular matrix $L$ with $A = L L^H$ \cite[Chapter 10]{Higham2002}, \cite[Chapter 4.2]{Golub1996}. This needs at worst $\tfrac{1}{3} n^3 + \mathcal{O}(n^2)$ basic operations and stores $2 n^2 + \mathcal{O}(n)$ numbers in the real valued case. The needed memory can be reduced if only the lower triangles of $A$ and $L$ are stored. This would result in $n^2 + \mathcal{O}(n)$ numbers. It can be reduced even more if $A$ can be overwritten by $L$. This would result in $\tfrac{1}{2} n^2 + \mathcal{O}(n)$ numbers.

\nameref{alg: approximated matrix} and \nameref{alg: approximated decomposition} using \nameref{alg: approximated decomposition extension: minimal change with permutation: max d} need at worst $\tfrac{1}{3} n^3 + \mathcal{O}(n^2)$ basic operations and memory for $2 n^2 + \mathcal{O}(n)$ numbers in the real valued case, too. For this only a few small modifications are necessary which are explained below. Hence, both algorithms have asymptotically the same worst case number of basic operations and memory as an algorithm which calculates a Cholesky decomposition. Thus, their overhead is negligible and vanishes asymptotically. With some small modifications, it is also possible to overwrite the input matrix $A$ with the output matrices $L$ and $B$. Thus, \nameref{alg: approximated matrix} meets objective \ref{objective: fast computation} and \ref{objective: low memory}.

\begin{sloppypar}
For \nameref{alg: minimal change}, the number of needed basic operations and numbers that have to be stored is $\mathcal{O}(1)$. Hence, \nameref{alg: approximated decomposition extension: minimal change with permutation: max d} needs $\mathcal{O}(n)$ basic operations and stores $\mathcal{O}(1)$ numbers. If \nameref{alg: approximated decomposition extension: minimal change with permutation: max d} is used in \nameref{alg: approximated decomposition} to choose the permutation as well as $d$ and $\omega$, $\mathcal{O}(n^2)$ additional basic operations have to be performed and $\mathcal{O}(n)$ additional numbers have to be stored.
\end{sloppypar}

In \nameref{alg: approximated decomposition}, a crucial part for the number of needed operations is the calculation of $L$ in \autoref{line: alg: approximated decomposition: L definition}. Here the effort can be reduced by calculating and storing $L D^{\frac{1}{2}}$ instead of $L$ first.  After that $L$ can be calculated with an effort of $\mathcal{O}(n^2)$ basic operations. This approach results in an overall worst case number of $\tfrac{1}{3} n^3 + \mathcal{O}(n^2)$ basic operations plus the basic operations needed for the permutation and the choice of $d$ and $\omega$. Furthermore $2 n^2 + \mathcal{O}(n)$ numbers have to be stored in \nameref{alg: approximated decomposition} despite the memory needed for the choice of the permutation, $d$ and $\omega$. Hence, if \nameref{alg: approximated decomposition extension: minimal change with permutation: max d} is used, the overall worst case number of basic operations is $\tfrac{1}{3} n^3 + \mathcal{O}(n^2)$ and $2 n^2 + \mathcal{O}(n)$ numbers have to be stored.

The needed storage can be reduced by storing only one triangle of $A$ and the lower triangle of $L$ and by overwriting $A$ with $L$. This would result in $n^2 + \mathcal{O}(n)$ and $\frac{1}{2} n^2 + \mathcal{O}(n)$ numbers, respectively. However, the permutation in \nameref{alg: approximated decomposition} must be taken into account here. For this, the indexing of $A$ in \autoref{line: alg: approximated decomposition: L definition} must be suitably adapted or $A$ must be permuted. However, these modifications would not influence the $\tfrac{1}{3} n^3$ as the dominant part in the number of basic operations.

For \nameref{alg: approximated matrix}, at most $\tfrac{1}{3} n^3 + \mathcal{O}(n^2)$ basic operations plus the basic operations for choosing the permutation, $d$ and $\omega$ are needed as well. This is because for each execution of \autoref{line: alg: approximated matrix: set below diagonal without omega} in \nameref{alg: approximated matrix}, the execution of \autoref{line: alg: approximated decomposition: L definition} in \nameref{alg: approximated decomposition} is once omitted. Thus, the worst case number of needed basic operations of \nameref{alg: approximated matrix} increases only by $\mathcal{O}(n^2)$ compared to the worst case number of needed basic operations of \nameref{alg: approximated decomposition}.

At most $2 n^2+ \mathcal{O}(n)$ numbers have to be stored in \nameref{alg: approximated matrix} plus the numbers that need to be stored for choosing the permutation, $d$ and $\omega$. To achieve this, $B$ must overwrite $L$. If the strict lower triangle of $L$ is allowed to overwrite the strict lower triangle of $A$, at most $n^2+ \mathcal{O}(n)$ numbers have to be stored plus the numbers for the choice of the permutation, $d$ and $\omega$.

Hence, \nameref{alg: approximated matrix}, with the small modifications mentioned above, needs at most $\tfrac{1}{3} n^3 + \mathcal{O}(n^2)$ basic operations and stores at most $2 n^2+ \mathcal{O}(n)$ numbers if \nameref{alg: approximated decomposition extension: minimal change with permutation: max d} is used to choose the permutation, $d$ and $\omega$. It stores only $n^2+ \mathcal{O}(n)$ numbers if it is allowed to overwrite the input matrix.

It would also be possible to reduce the needed memory to $\tfrac{1}{2} n^2+ \mathcal{O}(n)$ numbers by passing only the lower triangle of the matrices $A$, $L$ and $B$. Since in this case $A$ is then no longer available after the calculation of $L$, $B$ must be calculated in the more expensive way shown in Theorem \ref{theorem: B equals PT L D LH P}. This would result in $\tfrac{1}{3} n^3 + \mathcal{O}(n^2)$ additional basic operations.

In the complex valued case, the main statement remains the same: The overhead of \nameref{alg: approximated matrix} and \nameref{alg: approximated decomposition} using \nameref{alg: approximated decomposition extension: minimal change with permutation: max d} is negligible and vanishes asymptotically compared to an algorithm which calculates a (classical) Cholesky decomposition. In a similar way, an analysis can be carried out for the case where $A$ is a sparse matrix.
 \section{Implementation and numerical experiments}
\label{sec:implementation}

An implementation of the algorithms \nameref{alg: approximated matrix} and \nameref{alg: approximated decomposition} is presented in this section together with the performed numerical experiments.

\subsection{Implementation}

The algorithms \nameref{alg: approximated matrix} and \nameref{alg: approximated decomposition} presented in Section \ref{sec:LDL_approximation} are implemented in a software library written in Python \cite{Python-3.7} called matrix-decomposition library \cite{matrix-decomposition-1.2}. Their implementation uses the \nameref{alg: minimal change} algorithm and provides both permutation algorithms described in Subsection \ref{subsec: algorithms: permutation} as well as several fill reducing permutation algorithms for sparse matrices. In addition, the library provides many more approximation and decomposition algorithms together with various other useful functions regarding matrices and its decompositions.

The library is available at github \cite{matrix-decomposition-git}. It is based on NumPy \cite{NumPy-1.17}, SciPy \cite{SciPy-1.3,Virtanen2019} and scikit-sparse \cite{scikit-sparse-0.4.4}. It was extensively tested using pytest \cite{pytest-4.4.1} and documented using Sphinx \cite{Sphinx-2.0.1}. The matrix-decomposition library and all required packages are open-source. 

They can be comfortably installed using the cross-platform package manager Conda \cite{Conda} and the Anaconda Cloud \cite{matrix-decomposition-anaconda}. Here all required packages are installed during the installation of the matrix-decomposition library. The library is also available on the Python Package Index \cite{matrix-decomposition-PyPI} and is thus installable with the standard Python package manager pip \cite{pip} as well.

\subsection{Comparison with other approximation algorithms}
\label{subsec: numerical comparison}

The \nameref{alg: approximated matrix} algorithm has been compared with other modified Cholesky algorithms based on $LDL^T$ decomposition by the resulting approximation errors and the condition numbers of the approximations. For the results presented here, we have use the Frobenius norm. However, the results using the spectral norm look similar.

The other algorithms are GMW81 \cite{Gill1981}, which is a refined version of \cite{Gill1974},  GMW1 \cite{Fang2008} and GMW2 \cite{Fang2008} which are based on GMW81, SE90 \cite{Schnabel1990} and its refined version SE99 \cite{Schnabel1999} as well as SE1 \cite{Fang2008} which in turn is based on SE99. All these algorithms are implemented in the matrix-decomposition library \cite{matrix-decomposition-1.2}. These algorithms have been extended so that the approximation can have predefined diagonal values. For this, the calculated approximation was scaled by multiplying with a suitable diagonal matrix on both sides.

\nameref{alg: approximated matrix} has been configured so that the permutation strategy which prefers high values in $D$ is used and no upper bound on the values in $D$ is applied.

Different test scenarios were used. The first three scenarios are random correlation matrices disturbed by some additive unbiased noise which should be approximated by valid correlation matrices. The random correlation matrices have been generated by the algorithm described in \cite{Davies2000}. The off-diagonal values of the symmetric noise matrices have been drawn from a normal distribution with expectation value zero and 0.1, 0.2 or 0.3 as standard deviation depending on the scenario. The diagonal values of the noise matrices were zero  in all scenarios.

\begin{figure}
    \centering
    \begin{minipage}{0.49\textwidth}
        \includegraphics[width=\linewidth,height=0.71\linewidth]{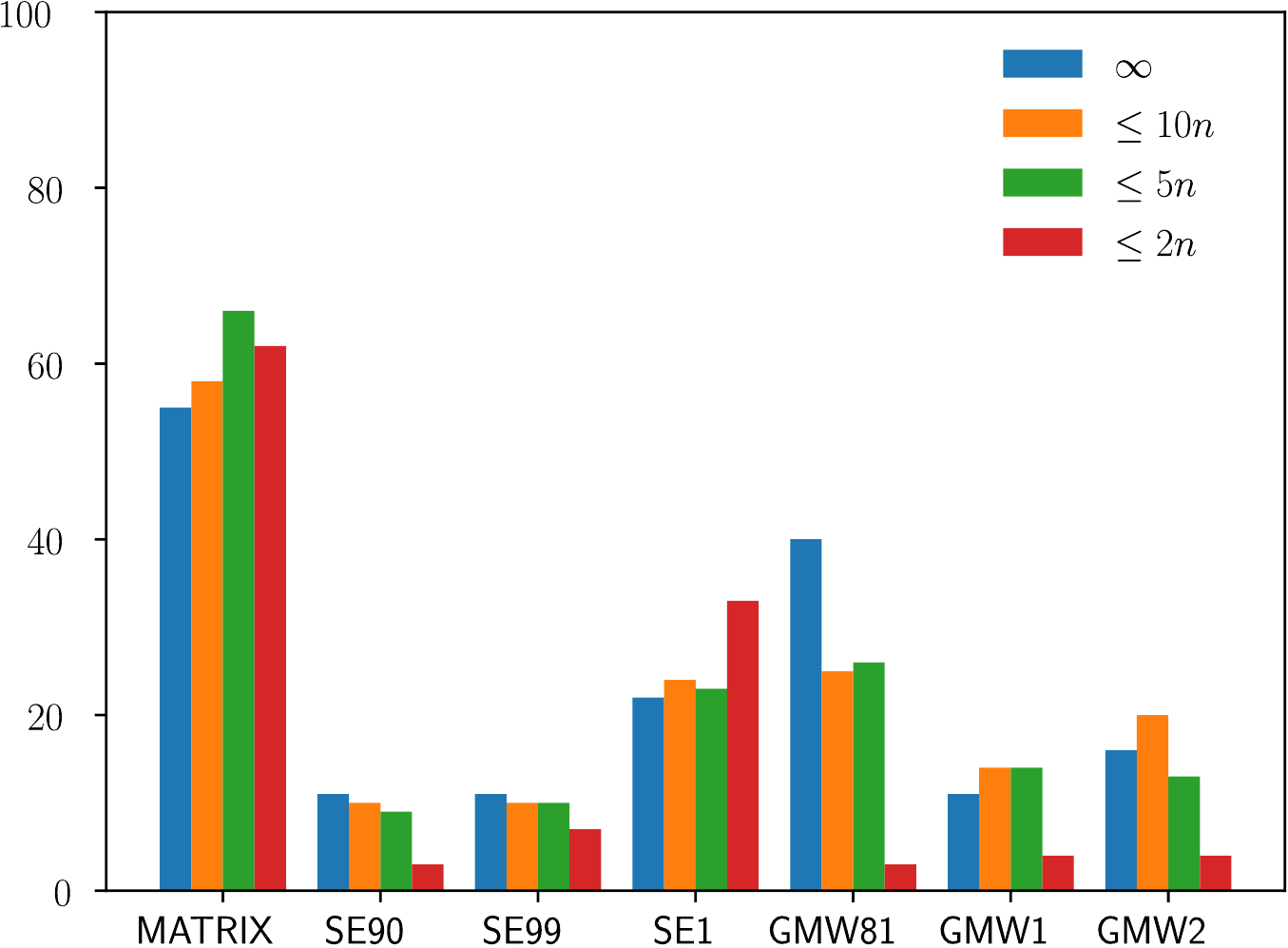}
    \end{minipage}
    \hfill
    \begin{minipage}{0.49\textwidth}
        \includegraphics[width=\linewidth,height=0.71\linewidth]{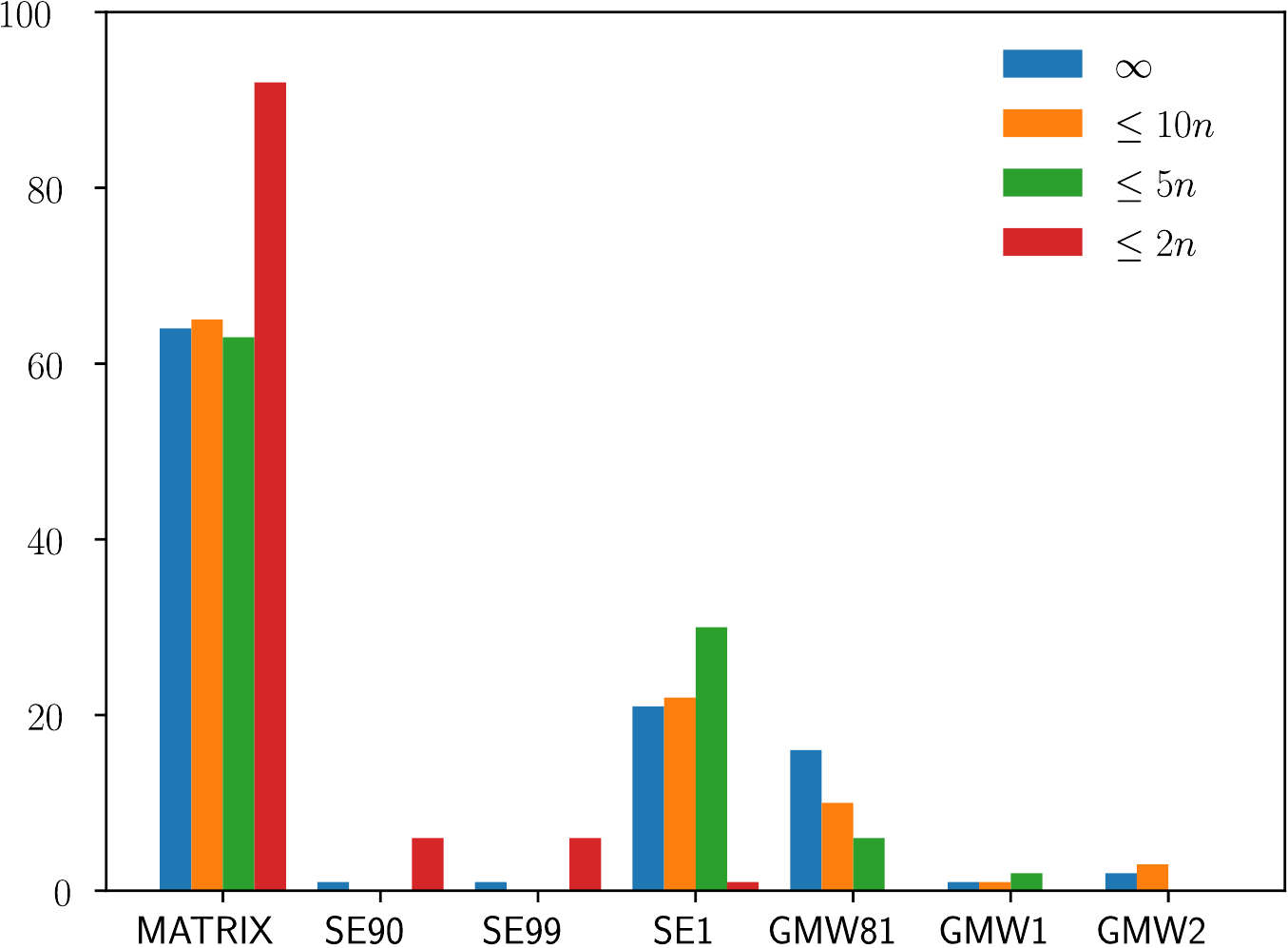}
    \end{minipage}
    \hfill
    \begin{minipage}{0.49\textwidth}
        \includegraphics[width=\linewidth,height=0.71\linewidth]{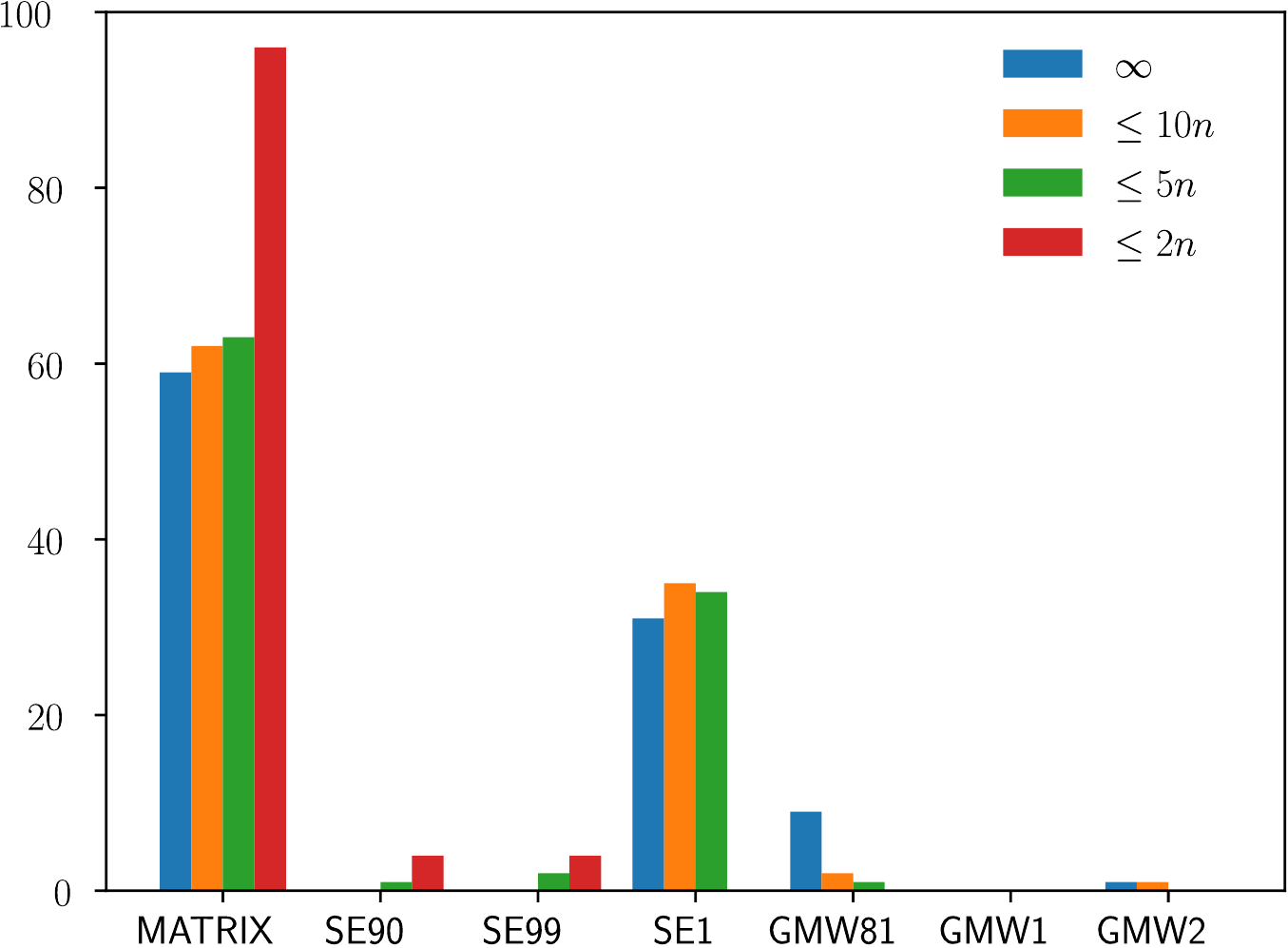}
    \end{minipage}
    \hfill
    \begin{minipage}{0.49\textwidth}
        \includegraphics[width=\linewidth,height=0.71\linewidth]{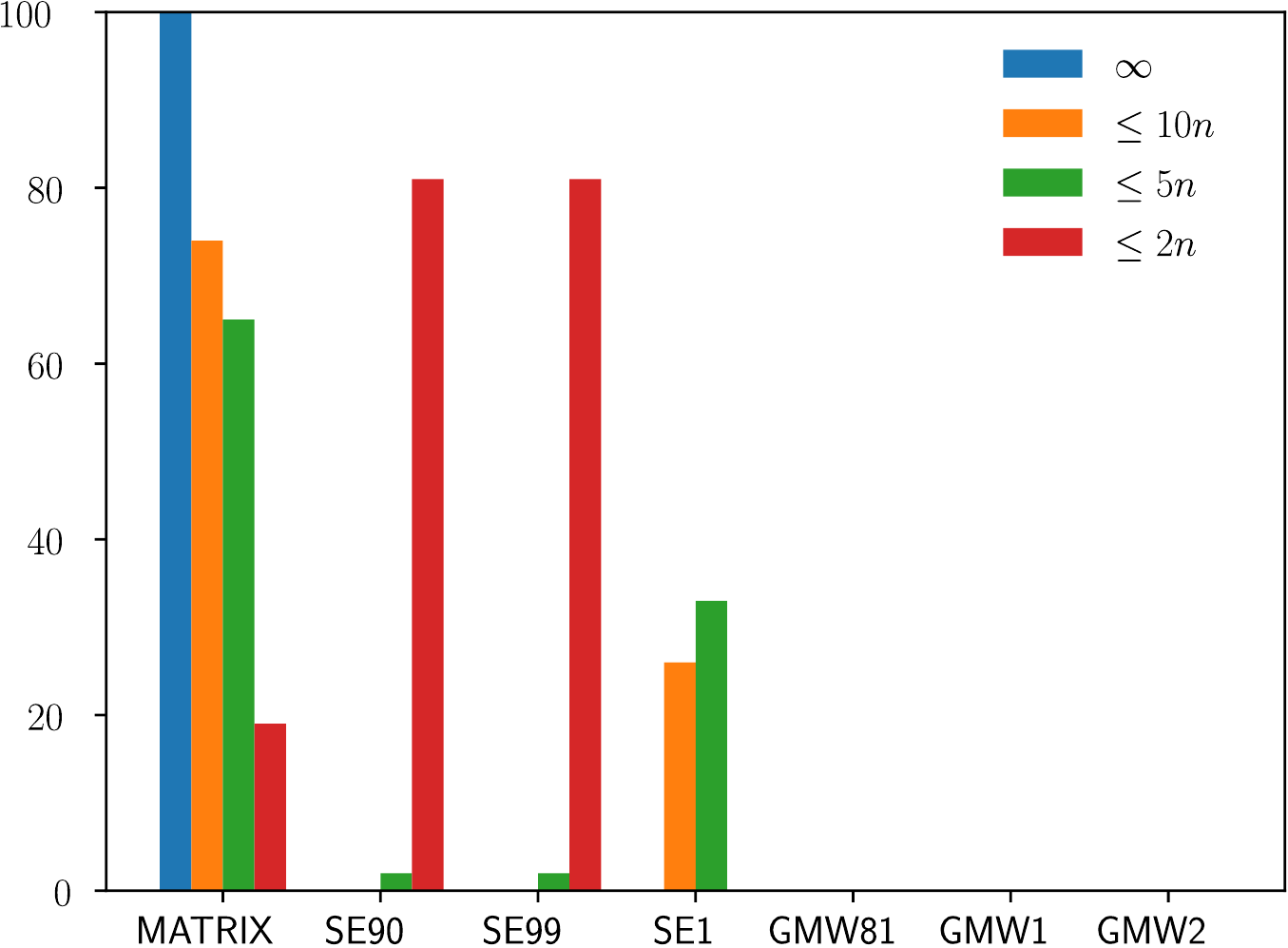}
    \end{minipage}
    \hfill
    \begin{minipage}{0.49\textwidth}
        \includegraphics[width=\linewidth,height=0.71\linewidth]{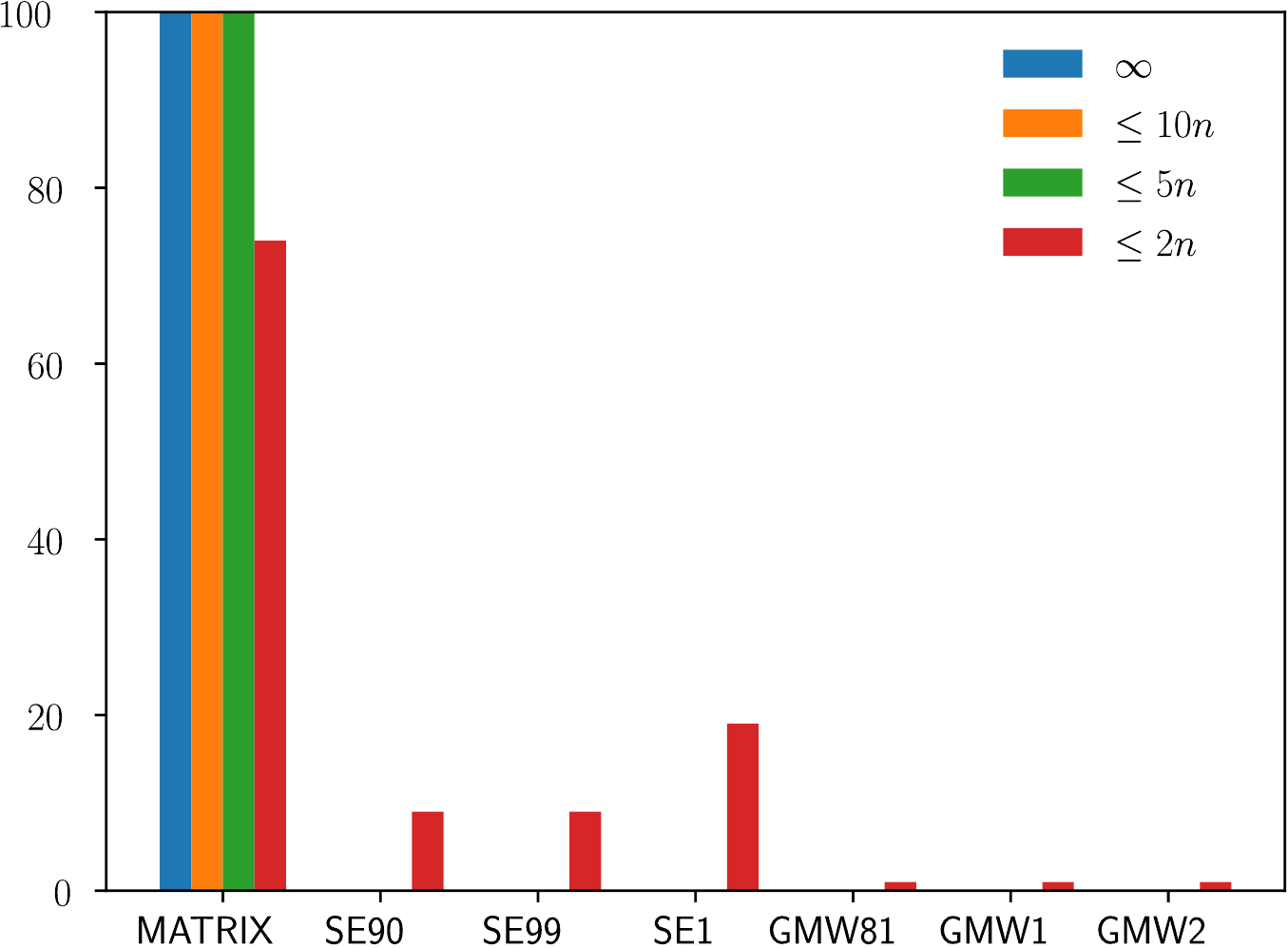}
    \end{minipage}
    \hfill
    \begin{minipage}{0.49\textwidth}
        \includegraphics[width=\linewidth,height=0.71\linewidth]{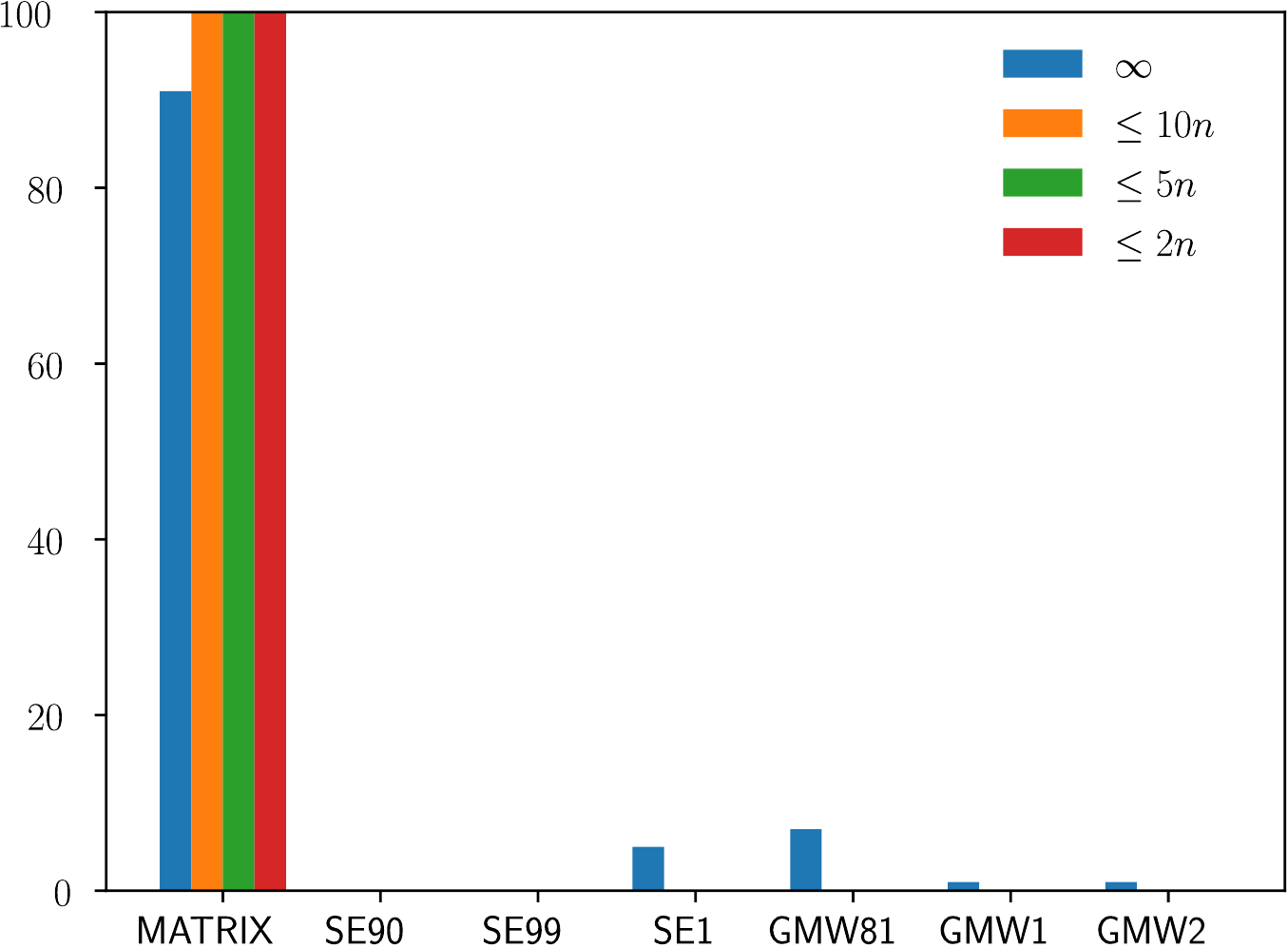}
    \end{minipage}
    \caption{Frequency, how often the algorithm achieved the smallest approximation error for the four different bounds on the condition number of the approximation (different colors) and for each of the six test scenarios (different plots).}
\label{fig: number of best}
\end{figure}

The last three scenarios are randomly generated symmetric matrices with eigenvalues uniformly distributed in $[-10^4, 10^4]$, $[-10^4, 1]$ or $[-1, 10^4]$, depending on the scenario, which should be approximated by symmetric positive semidefinite matrices. Each of these random symmetric matrices has been generated by multiplying a random orthogonal matrix, generated with the algorithm described in \cite{Stewart1980}, with a diagonal matrix with the chosen eigenvalues as diagonal values and then multiplying this with the transposed random orthogonal matrix. The eigenvalues have been drawn from uniform distributions and were altered so that each matrix has at least one negative and one positive eigenvalue.

\begin{figure}
    \centering
    \begin{minipage}{0.49\textwidth}
        \includegraphics[width=\linewidth,height=0.71\linewidth]{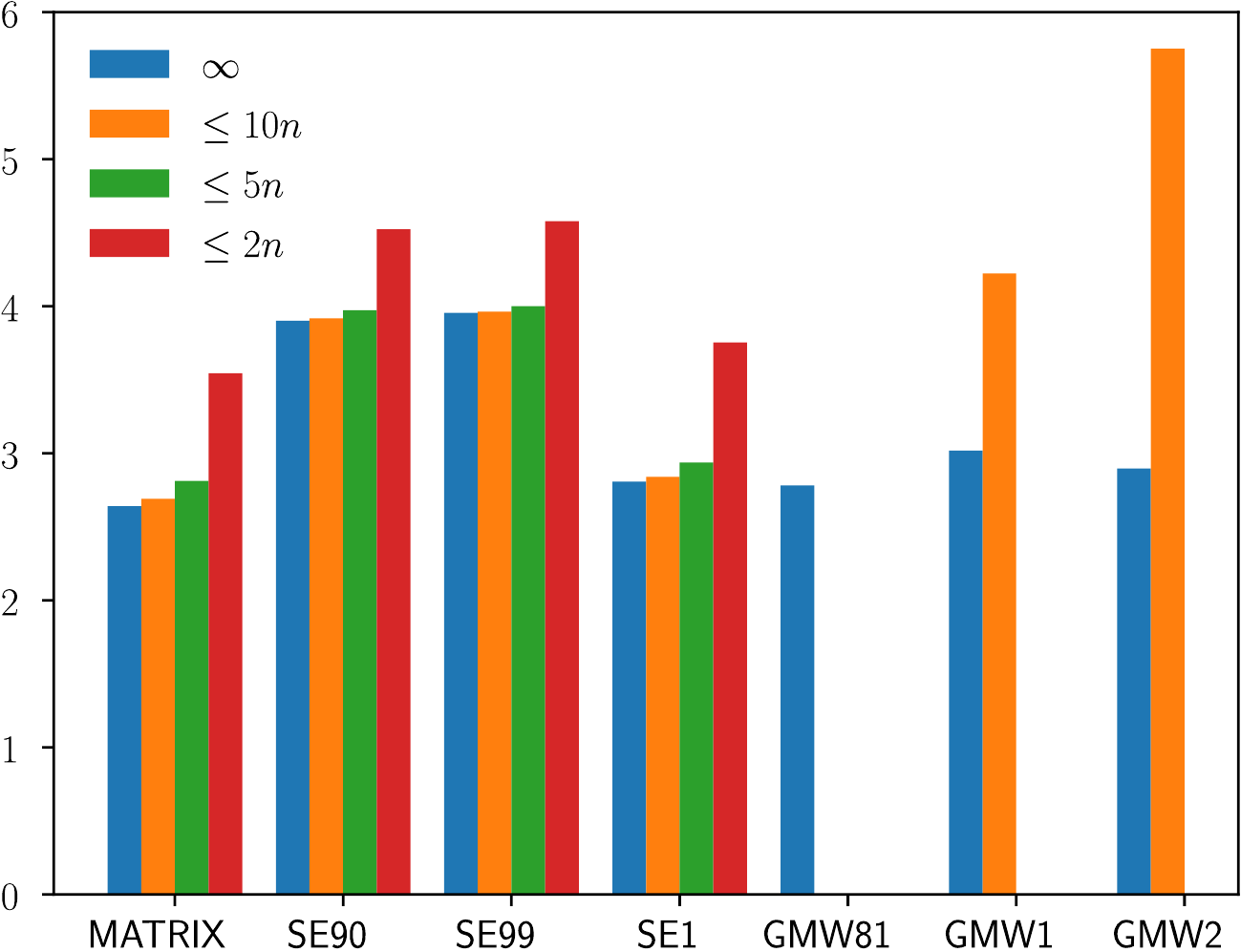}
    \end{minipage}
    \hfill
    \begin{minipage}{0.49\textwidth}
        \includegraphics[width=\linewidth,height=0.71\linewidth]{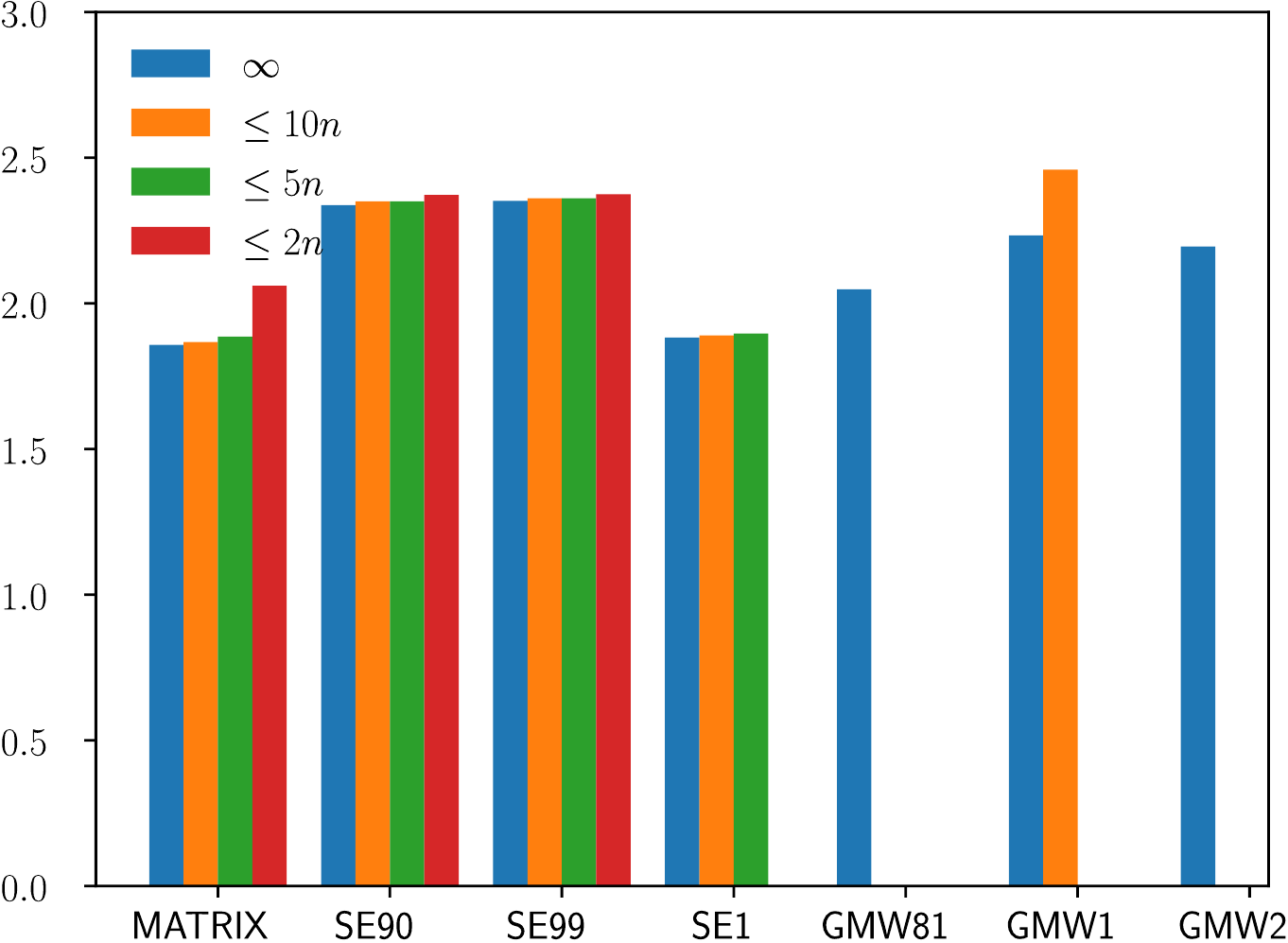}
    \end{minipage}
    \hfill
    \begin{minipage}{0.49\textwidth}
        \includegraphics[width=\linewidth,height=0.71\linewidth]{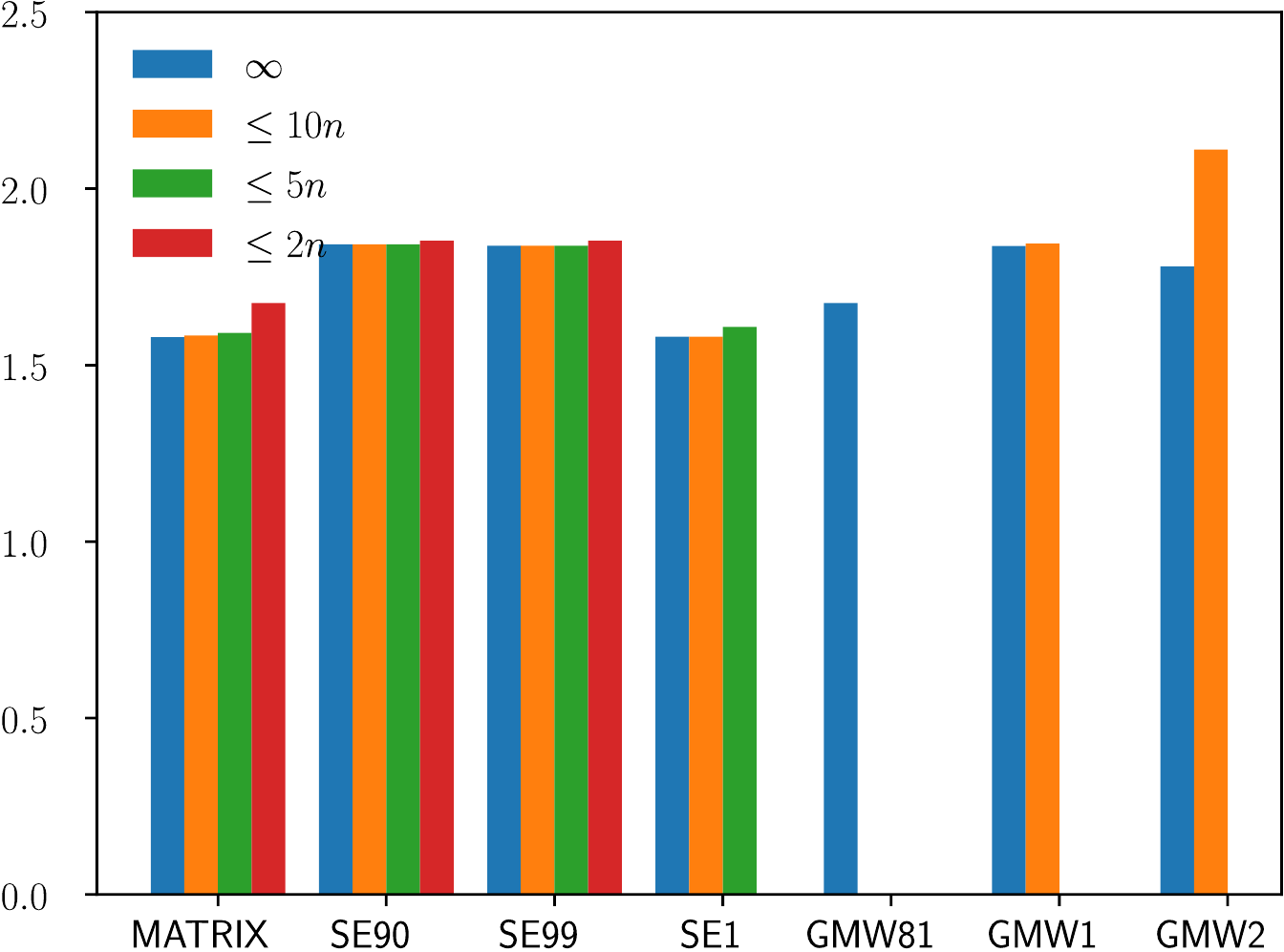}
    \end{minipage}
    \hfill
    \begin{minipage}{0.49\textwidth}
        \includegraphics[width=\linewidth,height=0.71\linewidth]{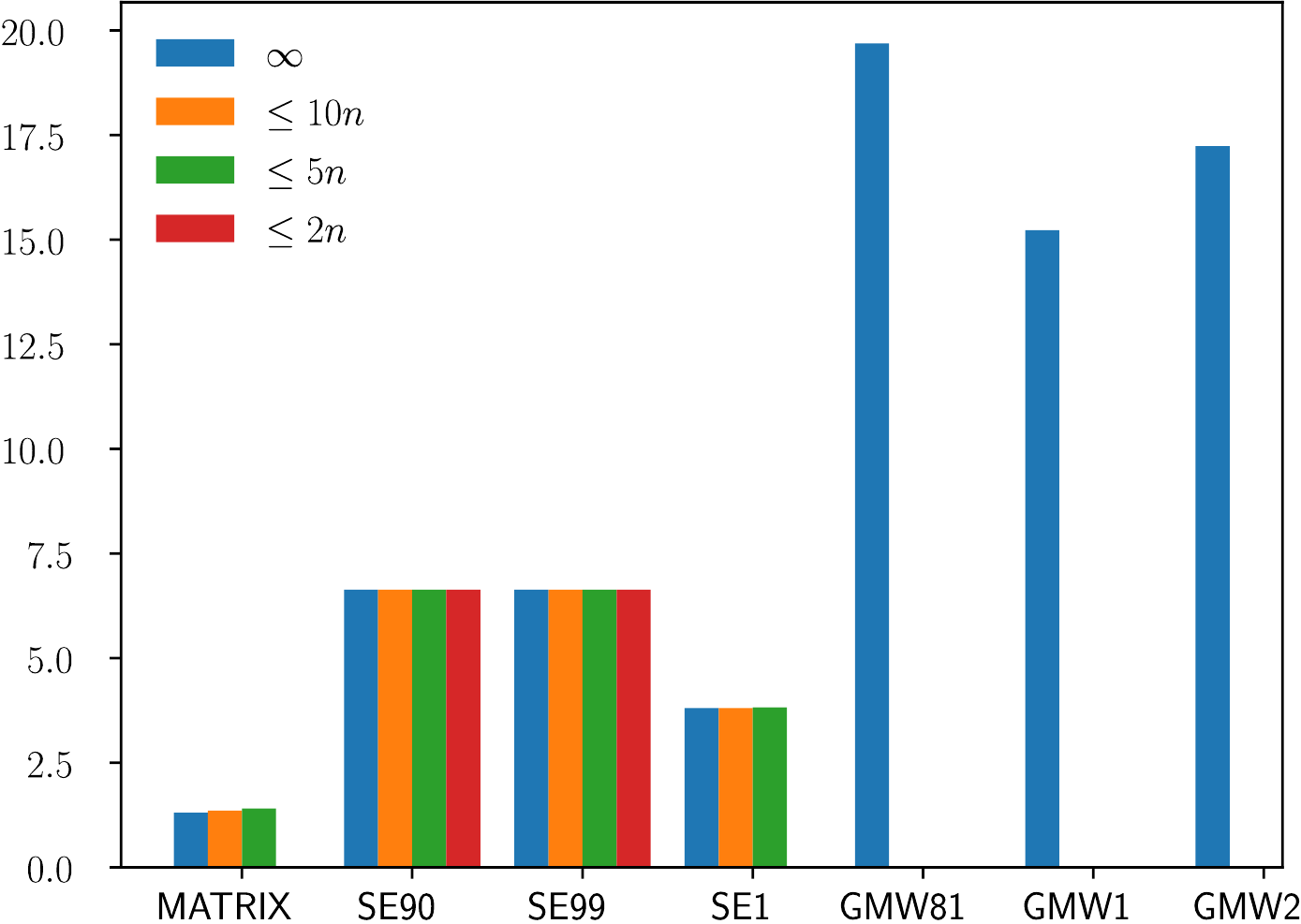}
    \end{minipage}
    \hfill
    \begin{minipage}{0.49\textwidth}
        \includegraphics[width=\linewidth,height=0.71\linewidth]{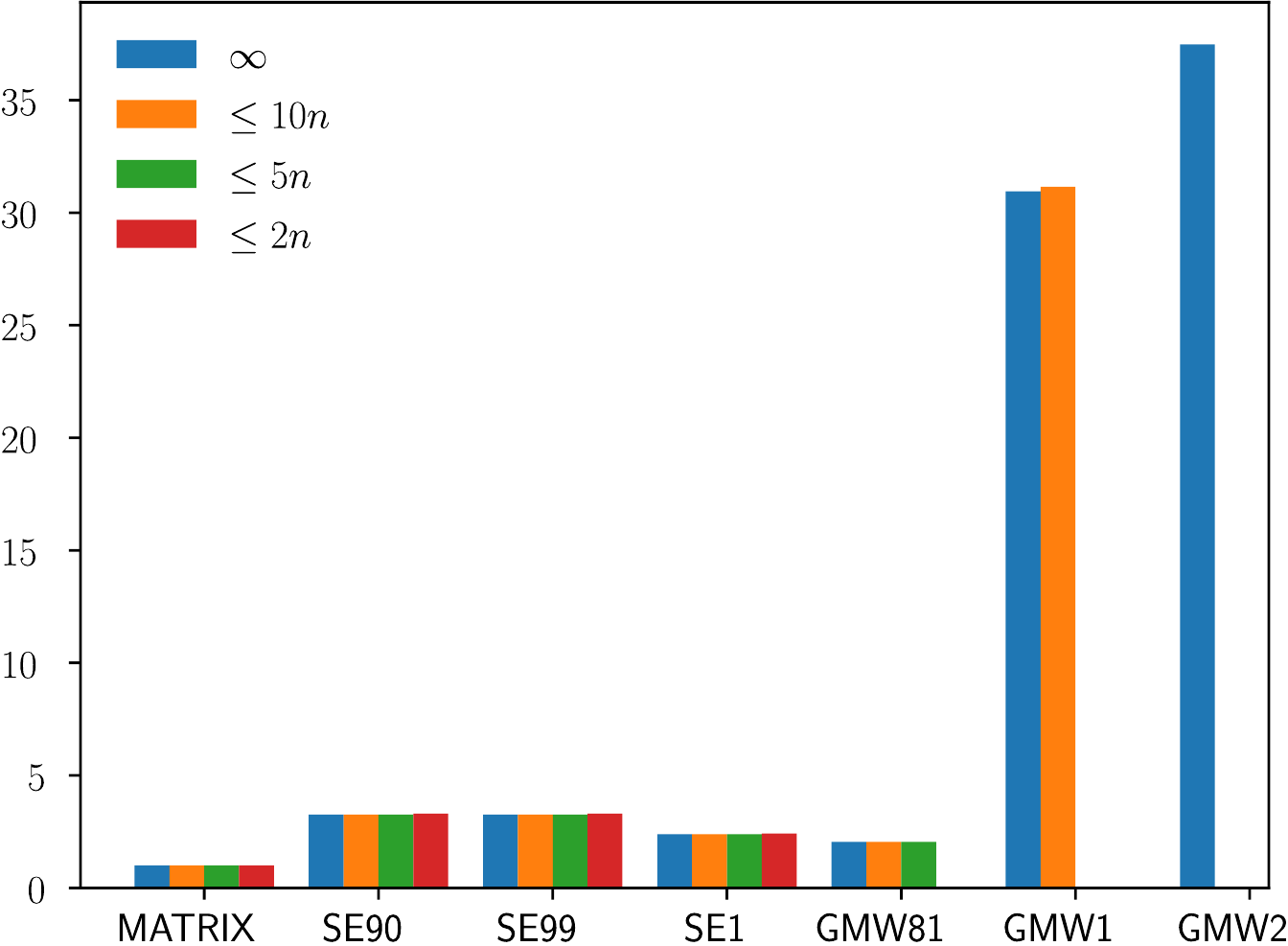}
    \end{minipage}
    \hfill
    \begin{minipage}{0.49\textwidth}
        \includegraphics[width=\linewidth,height=0.71\linewidth]{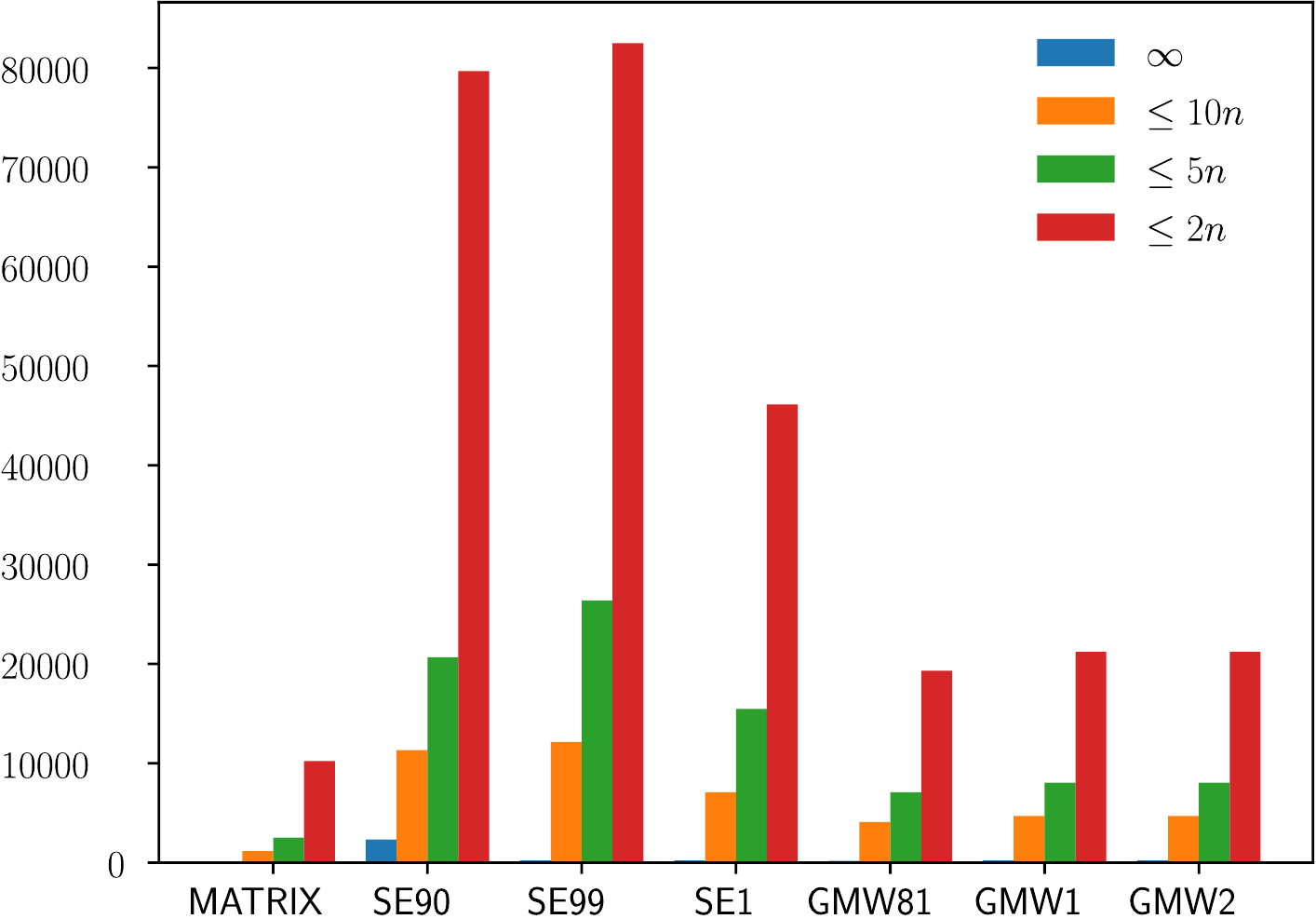}
    \end{minipage}
    \caption{Median of the approximation errors for the four different bounds on the condition number of the approximation (different colors) and for each of the six test scenarios (different plots).}
\label{fig: error}
\end{figure}

For each of the six scenarios, 100 matrices have been generated with dimensions evenly distributed between 10, 20, 30, 40 and 50 and each of them was approximated.

The approximations are assessed according to the approximation error and their condition number using different objectives. The first one is to minimize the approximation error without caring about the condition number. The other three are to minimize the approximation error while getting a condition number lower or equal to $10 n$, $5 n$ and $2 n$, respectively, where $n$ is the dimension of the matrix. This corresponds to the requirement, that the condition number should be sufficiently small (but must not be minimal), which often occurs in application examples. Minimizing only the condition number without taking the approximation error into account is not useful.

Each of the algorithms has a parameter, representing a lower bound on the values of $D$, allowing to favor a low approximation error or a low condition number. Hence, each algorithm has been executed several times with different values for this parameter and for each of the four objectives only the approximation which best meets the objective was taken into account.

Figure \ref{fig: number of best} shows how many times each algorithm has computed the approximation with the smallest approximation error among all tested algorithms for the six scenarios and the four objectives. The \nameref{alg: approximated matrix} algorithm clearly outperforms all other tested algorithms in all scenarios.

Figure \ref{fig: error} shows the median of the approximation errors for each of the six scenarios and the four objectives. The approximation errors are relative to the minimal possible approximation errors which have been calculated using the methods described in \cite{Higham1988} and \cite{Higham2002a}.  No bar in Figure \ref{fig: error} indicates that the algorithm was not able to calculate an approximation with the restriction to the condition number for at least half of the test matrices.

The results show that \nameref{alg: approximated matrix} calculates approximations with approximation errors usually close to optimal and still sufficiently small condition numbers. In addition, it performs better, sometimes very considerably, than the other tested algorithms.

The numerical tests have also indicated that, for determining $d_i$, a varying lower bound $\hat{l}_i$, defined as
\begin{equation*}
    \hat{l}_i
    :=
    \begin{cases}
        l &\text{ if } \tfrac{1}{2} c_{p_i} < l\\
        u &\text{ if } \tfrac{1}{2} c_{p_i} > u\\
        \tfrac{1}{2} c_{p_i} & \text{ else}
    \end{cases}
    \text{ with }
    c_i
    :=
    \begin{cases}
        x_i &\text{ if } A_{i i} < x_i\\
        y_i &\text{ if } A_{i i} > y_i\\
        A_{i i} & \text{ else}
    \end{cases}
\end{equation*}
for each $i \in \{1 \ldots, n\}$,
is useful in order to achieve a low approximation error and a low condition number. This varying lower bound is also choosable in the matrix-decomposition library.

 \section{Conclusions}
\label{sec: summary}

A new algorithm to approximate Hermitian matrices by positive semidefinite Hermitian matrices was presented. In contrast to existing algorithms, it allows to specify bounds on the diagonal values of the approximation.

It tries to minimize the approximation error in the Frobenius norm and the condition number of the approximation. Parameters of the algorithm can be used to select which of these two objectives is preferred if not both objectives can be meet equally well. Numerical tests have shown that the algorithms outperforms existing algorithms regarding the approximation error as well as the condition number.

The algorithm is suitable for very large matrices, since it needs only $\tfrac{1}{3} n^3 + \mathcal{O}(n^2)$ basic operations and storage for $n^2 + \mathcal{O}(n)$ numbers in the real valued case. This is asymptotically the same number of basic operations as the computation of a Cholesky decomposition would need. Moreover the algorithm is also suitable for sparse matrices since it preserves the sparsity pattern of the original matrix.  

The $LDL^H$ decomposition of the approximation is calculated as a byproduct. This allows to solve corresponding linear equations or to calculate the corresponding determinant very quickly. If such a decomposition should be calculated anyway, the algorithm has no significant overhead.

Two parts in the algorithm are realizable in many different ways. Various possibilities were presented, more are conceivable.

An open-source implementation of this algorithm is freely available. The implementation is fully documented and easy to install. Extensive numerical tests confirm the functionality of the algorithm and its implementation.

Numerical optimization and statistics are two fields of application in which the algorithm can be of particular interest.
 
\appendix
\section{Appendix}
\label{sec:LDL_decomposition}

\begin{theorem} \label{theorem: LDLH decomposition: existance and uniqueness}
    Let $A \in \C^{n \times n}$ be a positive semidefinite matrix. $A$ has a $L D L^H$ decomposition. If $A$ is positive definite this decomposition is unique.
\end{theorem}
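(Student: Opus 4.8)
The plan is to establish existence by induction on $n$ via a block (Schur complement) decomposition, and to establish uniqueness in the positive definite case by a short argument about triangular matrices. Throughout, ``$L D L^H$ decomposition'' means $A = L D L^H$ with $L$ lower triangular with ones on the diagonal and $D$ a real diagonal matrix, as in the output of \nameref{alg: approximated decomposition}.

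For existence, the base case $n = 1$ is immediate: if $A = (a)$ then $a \geq 0$ because $a$ is a diagonal entry of a positive semidefinite matrix, so $L = (1)$, $D = (a)$ works. For the inductive step write
\begin{equation*}
    A = \begin{pmatrix} a & v^H \\ v & \tilde{A} \end{pmatrix}
\end{equation*}
with $a = A_{11} \in \R$, $a \geq 0$, $v \in \C^{n-1}$, and $\tilde{A} \in \C^{(n-1)\times(n-1)}$ positive semidefinite (a principal submatrix of a positive semidefinite matrix). I would split into two cases. If $a > 0$, the Schur complement $S := \tilde{A} - \tfrac{1}{a}\, v v^H$ is positive semidefinite; this is the one genuine linear-algebra input and follows by evaluating $z^H A z$ and completing the square in the first coordinate of $z$. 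By the induction hypothesis $S = \tilde{L}\tilde{D}\tilde{L}^H$, and then one checks directly that
\begin{equation*}
    L := \begin{pmatrix} 1 & 0 \\ \tfrac{1}{a} v & \tilde{L} \end{pmatrix}, \qquad D := \begin{pmatrix} a & 0 \\ 0 & \tilde{D} \end{pmatrix}
\end{equation*}
satisfy $L D L^H = A$ with $L$ of the required form. If $a = 0$, I claim $v = 0$: for each $i$ the $2\times 2$ principal submatrix of $A$ on indices $\{1, i+1\}$ equals $\begin{pmatrix} 0 & \overline{v_i} \\ v_i & \tilde{A}_{ii}\end{pmatrix}$, whose determinant $-|v_i|^2$ must be nonnegative, forcing $v_i = 0$. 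Hence $A = \diag(0, \tilde{A})$, and applying the induction hypothesis to $\tilde{A}$ and prepending a leading $0$ to the diagonal of $D$ together with a leading unit column of $L$ (whose subdiagonal entries may be taken to be $0$, since they are multiplied by $d_1 = 0$) yields the decomposition.

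For uniqueness, assume $A$ is positive definite and $A = L_1 D_1 L_1^H = L_2 D_2 L_2^H$ with $L_1, L_2$ lower triangular with unit diagonal and $D_1, D_2$ diagonal. Since $L_1, L_2$ are invertible and $A$ is positive definite, $D_k = L_k^{-1} A L_k^{-H}$ is positive definite, hence invertible. Rearranging gives
\begin{equation*}
    L_2^{-1} L_1 D_1 = D_2 L_2^H L_1^{-H} = D_2 (L_1^{-1} L_2)^H .
\end{equation*}
The left-hand side is lower triangular with diagonal entries equal to those of $D_1$ (because $L_2^{-1} L_1$ has unit diagonal), and the right-hand side is upper triangular; a matrix that is simultaneously upper and lower triangular is diagonal, so $L_2^{-1} L_1 D_1 = D_1$. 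As $D_1$ is invertible, $L_2^{-1} L_1 = I$, i.e. $L_1 = L_2$, and then $D_1 = D_2$ follows at once.

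The main obstacle is the $a = 0$ branch of the induction, where one must simultaneously use that positive semidefiniteness passes to the trailing principal submatrix and that the $2\times 2$ principal minors force the off-diagonal block to vanish. This is also exactly the phenomenon behind the restriction in the uniqueness statement: whenever some $d_i = 0$ the subdiagonal entries of column $i$ of $L$ are arbitrary, so uniqueness genuinely fails for merely positive semidefinite $A$. The remaining steps are routine block bookkeeping.
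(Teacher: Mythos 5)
Your proposal is correct; note, though, that the paper does not actually prove this theorem itself — it simply cites Householder (1964, p.~13) — so what you provide is a self-contained replacement for that reference rather than a variant of an argument in the paper. Your route is the standard one: existence by induction on $n$ via the Schur complement, and uniqueness by the triangular-cancellation argument. The details check out: for $a > 0$ the positive semidefiniteness of $S = \tilde{A} - \tfrac{1}{a} v v^H$ follows from the completing-the-square evaluation of $z^H A z$, and the block identity $L D L^H = A$ is a direct computation; for $a = 0$ the $2\times 2$ principal minors indeed force $v = 0$, which is precisely the point where mere semidefiniteness would otherwise break the induction; and in the uniqueness part the key facts — that $D_1 = L_1^{-1} A L_1^{-H}$ is positive definite (hence invertible) and that a matrix which is both lower and upper triangular is diagonal — are used correctly, giving $L_1 = L_2$ and then $D_1 = D_2$. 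What your approach buys is self-containment and an explicit explanation of why uniqueness genuinely fails for singular positive semidefinite $A$ (a zero pivot leaves the corresponding column of $L$ free), which connects nicely to the algorithm's convention of setting $L_{ji} = 0$ when $d_i = 0$; what the paper's citation buys is brevity, since the result is classical.
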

\begin{proof}
    See \cite[p. 13]{Householder1964}.
    \flushright\qed
\end{proof}

\begin{theorem} \label{theorem: LDLH decomposition and eigenvalues}
    Let $L \in \C^{n \times n}$ be a lower triangular matrix with ones on the diagonal and $D \in \R^{n \times n}$ a diagonal matrix. $L D L^H$ is 
    \begin{enumerate}[label=\alph*)]
        \item invertible if and only if $D_{ii} \neq 0$ for all $i \in \{1, \ldots, n\}$.
        \item positive semidefinite if and only if $D_{ii} \geq 0$ for all $i \in \{1, \ldots, n\}$ 
        \item positive definite if and only if $D_{ii} > 0$ for all $i \in \{1, \ldots, n\}$.
    \end{enumerate}
\end{theorem}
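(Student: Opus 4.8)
The plan is to exploit that $L$, and hence $L^H$, is invertible, so that the Hermitian form $z \mapsto z^H(LDL^H)z$ coincides with that of $D$ after an invertible change of variables. First I would record the preliminaries: since $L$ is lower triangular with ones on the diagonal, $\det L = 1$, so $L$ and $L^H$ are invertible; and since $D$ is real and diagonal, $D^H = D$, whence $(LDL^H)^H = L D^H L^H = L D L^H$, so $LDL^H$ is Hermitian and the notions in b) and c) are meaningful for it.

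For part a) I would simply compute $\det(LDL^H) = \det(L)\det(D)\det(L^H) = \det D = \prod_{i=1}^n D_{ii}$, using $\det L^H = \overline{\det L} = 1$; this is nonzero exactly when every $D_{ii}\neq 0$. (Equivalently: $LDL^H$ is a product of three matrices, two of which are invertible, so it is invertible iff $D$ is, iff all diagonal entries of $D$ are nonzero.)

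For parts b) and c) the key step is the identity
\begin{equation*}
  z^H (L D L^H) z = (L^H z)^H D (L^H z) = \sum_{i=1}^n D_{ii}\, \bigl|(L^H z)_i\bigr|^2
  \qquad \text{for all } z \in \C^n .
\end{equation*}
Because $L^H$ is invertible, the map $z \mapsto w := L^H z$ is a bijection of $\C^n$ onto itself sending $0$ to $0$ and nonzero vectors to nonzero vectors. Hence $z^H(LDL^H)z \geq 0$ for all $z$ if and only if $\sum_i D_{ii}|w_i|^2 \geq 0$ for all $w \in \C^n$, and $z^H(LDL^H)z > 0$ for all $z \neq 0$ if and only if $\sum_i D_{ii}|w_i|^2 > 0$ for all $w \neq 0$. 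The ``if'' directions are then immediate from $|w_i|^2 \geq 0$, and the ``only if'' directions follow by testing $w = e_i$, the $i$-th standard basis vector, which yields $D_{ii} \geq 0$, respectively $D_{ii} > 0$.

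There is no genuine obstacle here; the only point needing a moment's care is transferring the ``for all $z$'' quantifier to a ``for all $w$'' quantifier, which is precisely where invertibility of $L^H$ enters, together with the initial observation that $LDL^H$ is Hermitian. An alternative for b) and c) would be to invoke Sylvester's law of inertia (congruent Hermitian matrices have the same inertia), but the direct computation above is shorter and self-contained.
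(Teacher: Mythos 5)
Your proof is correct, but it takes a genuinely different route from the paper. The paper proves all three parts in one stroke via Sylvester's law of inertia (in its extension to Hermitian matrices): $LDL^H$ and $D$ are congruent via the invertible matrix $L$, hence have the same numbers of negative, zero and positive eigenvalues, and since $D$ is diagonal these counts are read off from its diagonal entries; invertibility, positive semidefiniteness and positive definiteness then follow immediately from the eigenvalue characterizations. You instead argue directly: the determinant identity $\det(LDL^H)=\prod_i D_{ii}$ settles a), and the change of variables $w = L^H z$ in the Hermitian form, together with testing on standard basis vectors, settles b) and c). Your route is more elementary and self-contained --- it needs no citation of the inertia theorem, only $\det L = 1$ and invertibility of $L^H$ --- and it is essentially the same congruence computation the paper itself uses later when proving that $B$ is positive (semi)definite (Theorem \ref{theorem: algorithms: positive semidefinite}). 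What the paper's inertia argument buys is slightly more information (the full signature of $LDL^H$, not just the three stated dichotomies) at the cost of invoking an external result; you even note this alternative yourself. One small point to make explicit if you write this up: in a) you should say that invertibility of $LDL^H$ is equivalent to $\det(LDL^H)\neq 0$, which you implicitly use; otherwise the argument is complete.
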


\begin{proof}
    Sylvester's law of inertia \cite{Sylvester1852} extended to Hermitian matrices \cite{Ikramov2001} implies that $L D L^H$ and $D$ have the same number of negative, zero, and respectively positive eigenvalues. Since $D$ is a diagonal matrix, the eigenvalues of $D$ are its diagonal values. Hence $L D L^H$ is invertible, positive semidefinite or positive definite if and only if the diagonal values of $D$ are non-zero, non-negative or positive, respectively. 
    \flushright\qed
\end{proof}

\begin{theorem} \label{theorem: condition number inequality: positive definite}
    Let $A \in \C^{n \times n}$ be a positive definite matrix. Let $L$ and $D$ be the matrices of its $LDL^H$ decomposition. Then   
    \begin{equation*}
        \left( \frac{\trace(A)}{n \beta} \right)^\frac{n}{2(n-1)}
        \leq
        \kappa_2(L)
        \leq
        2 \left( \frac{\trace(A)}{n \alpha} \right)^\frac{n}{2}
        ,
    \end{equation*}
    \begin{equation*}
        \kappa_2(D)
        =
        \frac{\beta}{\alpha}
        \text{ and }
        \kappa_2(A)
        \leq
        4 \frac{\beta}{\alpha} \left( \frac{\trace(A)}{n \alpha} \right)^{n}
    \end{equation*}
    \begin{equation*}
        \text{with }
        \alpha := \min_{i=1,\ldots,n} D_{ii}
        \text{ and }
        \beta := \max_{i=1,\ldots,n} D_{ii}
        .
    \end{equation*}
\end{theorem}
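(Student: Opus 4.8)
The plan is to reduce all three bounds to elementary facts about the singular values of $L$, using throughout that $L$ is unit lower triangular, so $\det L = 1$ and hence $\prod_{i=1}^{n} \sigma_i(L) = 1$, where $\sigma_1 \geq \dots \geq \sigma_n > 0$ denote the singular values of $L$. (I assume $n \geq 2$; for $n = 1$ everything is trivial.)

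First I would dispose of $D$. Since $A$ is positive definite, Theorem~\ref{theorem: LDLH decomposition and eigenvalues} gives $D_{ii} > 0$ for all $i$, so $\|D\|_2 = \beta$ and $\|D^{-1}\|_2 = \alpha^{-1}$, whence $\kappa_2(D) = \beta/\alpha$. Next I would record two inequalities for $L$. Expanding the trace gives $\trace(A) = \trace(L D L^H) = \sum_{i,j} |L_{ij}|^2 D_{jj} \geq \alpha \sum_{i,j} |L_{ij}|^2 = \alpha \|L\|_F^2$, so $\|L\|_F^2 \leq \trace(A)/\alpha$. On the other hand, submultiplicativity of the spectral norm yields $\|A\|_2 = \|L D L^H\|_2 \leq \|L\|_2^2 \, \|D\|_2 = \beta \, \sigma_1^2$, and since $\|A\|_2 = \lambda_{\max}(A) \geq \trace(A)/n$ (the largest eigenvalue dominates the average), it follows that $\sigma_1^2 \geq \trace(A)/(n\beta)$.

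For the lower bound on $\kappa_2(L)$, note that $\sigma_n$ is at most the geometric mean of $\sigma_2,\dots,\sigma_n$, so $\sigma_n \leq \bigl( \prod_{i=2}^{n} \sigma_i \bigr)^{1/(n-1)} = \sigma_1^{-1/(n-1)}$ by $\prod_i \sigma_i = 1$; therefore $\kappa_2(L) = \sigma_1/\sigma_n \geq \sigma_1^{n/(n-1)} \geq \bigl(\trace(A)/(n\beta)\bigr)^{n/(2(n-1))}$. For the upper bound I would again use $\prod_i \sigma_i = 1$ to write $\kappa_2(L) = \sigma_1/\sigma_n = \sigma_1^2 \prod_{i=2}^{n-1} \sigma_i$, bound the middle product by the AM--GM inequality applied to $\sigma_2^2,\dots,\sigma_{n-1}^2$, which gives $\prod_{i=2}^{n-1} \sigma_i \leq \bigl( (\|L\|_F^2 - \sigma_1^2)/(n-2) \bigr)^{(n-2)/2}$, and then maximize the function $u \mapsto u \bigl( (\|L\|_F^2 - u)/(n-2) \bigr)^{(n-2)/2}$ of $u = \sigma_1^2$. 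A short calculation shows its maximum equals $2 \bigl( \|L\|_F^2/n \bigr)^{n/2}$, attained at $u = 2\|L\|_F^2/n$, so $\kappa_2(L) \leq 2 \bigl( \|L\|_F^2/n \bigr)^{n/2} \leq 2 \bigl( \trace(A)/(n\alpha) \bigr)^{n/2}$; the degenerate case $n = 2$ (empty product, maximum on the boundary $u = \|L\|_F^2$) is checked directly. Extracting exactly the constant $2$, the factor $n$ in the denominator and the exponent $n/2$ from this optimization is the delicate step; everything else is bookkeeping.

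Finally, submultiplicativity gives $\kappa_2(A) = \|L D L^H\|_2 \, \|L^{-H} D^{-1} L^{-1}\|_2 \leq \|L\|_2^2 \|D\|_2 \cdot \|L^{-1}\|_2^2 \|D^{-1}\|_2 = \kappa_2(L)^2 \, \kappa_2(D)$, and inserting the bounds just obtained yields $\kappa_2(A) \leq 4 \, \frac{\beta}{\alpha} \bigl( \trace(A)/(n\alpha) \bigr)^{n}$, which completes the proof.
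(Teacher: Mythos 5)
Your proof is correct, but it takes a genuinely different route from the paper. The paper sets $B := LL^H$, notes $\det(B)=1$, and imports the two-sided bound of \cite{Davis1961} for the condition number of a positive definite matrix with unit determinant in terms of its trace, $c^{-1/(n-1)} \leq \kappa_2(B) \leq (1+\sqrt{1-c})/(1-\sqrt{1-c})$ with $c=(n/\trace(B))^n$, simplifies the right-hand side to $4/c$ using $\trace(B)=\|L\|_F^2\geq n$, and then converts to $\trace(A)$ via the same sandwich you use, $\trace(A)/\beta \leq \|L\|_F^2 \leq \trace(A)/\alpha$. You instead work directly with the singular values of $L$: the upper bound comes from $\prod_i\sigma_i=1$, AM--GM on $\sigma_2^2,\dots,\sigma_{n-1}^2$, and maximizing $u\mapsto u\bigl((\|L\|_F^2-u)/(n-2)\bigr)^{(n-2)/2}$, which correctly reproduces the constant $2$ and the exponent $n/2$ (your critical-point computation and the $n=2$ boundary case both check out); the lower bound comes from $\sigma_n\leq\sigma_1^{-1/(n-1)}$ together with $\sigma_1^2\geq\trace(A)/(n\beta)$, which you obtain from $\|A\|_2\leq\beta\sigma_1^2$ and $\|A\|_2\geq\trace(A)/n$ rather than from the Frobenius-norm bound. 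In effect you re-derive the needed special case of the cited result: your argument is self-contained and elementary at the cost of the optimization computation, while the paper's is shorter by leaning on the reference; the treatment of $\kappa_2(D)$ and the final submultiplicativity step $\kappa_2(A)\leq\kappa_2(L)^2\kappa_2(D)$ coincide in both proofs.
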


\begin{proof}
    Define $B := L L^H$. The definition of $B$ implies
    \begin{equation*}
        \kappa_2(L) = \sqrt{\kappa_2(B)}
    \end{equation*}
    since $\kappa_2(B) = \kappa_2(L L^H) = \kappa_2(L)^2$.

     $L$ is a lower triangular matrix with ones on the diagonal. Hence, $\det(L) = 1$ and
    \begin{equation*}
        \det(B) = \det(L) \det(L^H) = \det(L)^2 = 1
        .
    \end{equation*}
    Thus, \cite{Davis1961} state that
    \begin{equation} \label{equation: LDLH decomposition: theorem: condition number inequality: B condition number inequality}
        c^{-\frac{1}{n-1}}
        \leq
        \kappa_2(B)
        \leq
        \frac
        {1 + \sqrt{1- c}}
        {1 - \sqrt{1- c}}
        \text{ ~with~ }
        c := \left( \frac{n}{\trace(B)} \right)^n
        .
    \end{equation}
    Besides,
    \begin{equation} \label{equation: LDLH decomposition: theorem: condition number inequality: trace B}
        \trace(B)
        =
        \trace(L L^H)
        =
        \sum\limits_{i,j=1}^{n} L_{ij} \conj{L_{ij}}
        =
        \sum\limits_{i,j=1}^{n} |L_{ij}|^2
        =
        \|L\|_F^2
        .
    \end{equation}
    and
    \begin{equation*}
        \|L\|_F^2 = \sum\limits_{i,j=1}^{n} |L_{ij}|^2 \geq \sum\limits_{i=1}^{n} |L_{ii}|^2 = n
        .
    \end{equation*}
    Hence $0 \leq c \leq 1$, which implies
    \begin{equation} \label{equation: LDLH decomposition: theorem: condition number inequality: frac inequality}
        \frac
        {1 + \sqrt{1- c}}
        {1 - \sqrt{1- c}}
        =
        \frac
        {(1 + \sqrt{1- c})^2}
        {(1 - \sqrt{1- c})(1 + \sqrt{1- c})}
        =
        \frac
        {(1 + \sqrt{1- c})^2}
        {c}
        \leq
        \frac
        {2^2}
        {c}
        .
    \end{equation}
    Equation \eqref{equation: LDLH decomposition: theorem: condition number inequality: B condition number inequality}, \eqref{equation: LDLH decomposition: theorem: condition number inequality: trace B} and \eqref{equation: LDLH decomposition: theorem: condition number inequality: frac inequality} result in
    \begin{equation*}
        \left( \frac{\|L\|_F^2}{n} \right)^\frac{n}{n-1}
        \leq
        \kappa_2(B)
        \leq
        4 \left( \frac{\|L\|_F^2}{n} \right)^n
    \end{equation*}
    and thus
    \begin{equation} \label{equation: LDLH decomposition: theorem: condition number inequality: L condition number inequality}
        \left( \frac{\|L\|_F^2}{n} \right)^\frac{n}{2(n-1)}
        \leq
        \kappa_2(L)
        \leq
        2 \left( \frac{\|L\|_F^2}{n} \right)^\frac{n}{2}
        .
    \end{equation}
    
    Theorem \ref{theorem: LDLH decomposition and eigenvalues} implies $0 < \alpha$ because $A$ is positive definite. Moreover, $\alpha \leq D_{ii} \leq \beta$ for all $i \in \{1,\ldots,n \}$ by definition of $\alpha$ and $\beta$. Thus
    \begin{align*}
        \frac{\trace(A)}{\beta} 
        &=
        \frac{1}{\beta} \sum\limits_{i=1}^{n} A_{ii}
        =
        \frac{1}{\beta} \sum\limits_{i=1}^{n} \sum\limits_{j=1}^{n} L_{ij} D_{jj} \conj{L}_{ij}
        =
        \sum\limits_{i=1}^{n} \sum\limits_{j=1}^{n} |L_{ij}|^2 \frac{D_{jj}}{\beta}
        \\ &\leq
        \sum\limits_{i=1}^{n} \sum\limits_{j=1}^{n} |L_{ij}|^2
        =
        \| L \|_F^2
        =
        \sum\limits_{i=1}^{n} \sum\limits_{j=1}^{n} |L_{ij}|^2
        \\ &\leq
        \sum\limits_{i=1}^{n} \sum\limits_{j=1}^{n} |L_{ij}|^2 \frac{D_{jj}}{\alpha}
        =
        \frac{1}{\alpha} \sum\limits_{i=1}^{n} \sum\limits_{j=1}^{n} L_{ij} D_{jj} \conj{L}_{ij}
        =
        \frac{1}{\alpha} \sum\limits_{i=1}^{n} A_{ii}
        =
        \frac{\trace(A)}{\alpha} 
        .
    \end{align*}
    Hence
    \begin{equation*}
        \left( \frac{\trace(A)}{n \beta} \right)^\frac{n}{2(n-1)}
        \leq
        \kappa_2(L)
        \leq
        2 \left( \frac{\trace(A)}{n \alpha} \right)^\frac{n}{2}
    \end{equation*}
    with \eqref{equation: LDLH decomposition: theorem: condition number inequality: L condition number inequality}.
        
    Furthermore
    \begin{equation*}
        \kappa_2(D)
        =
        \frac{\max\limits_{i=1,\ldots,n} |D_{ii}|}{\min\limits_{i=1,\ldots,n} |D_{ii}|}
        =
        \frac{\beta}{\alpha}
    \end{equation*}
    since $D$ is a diagonal matrix. Thus
    \begin{align*}
        \kappa_2(A)
        &=
        \kappa_2(L D L^H)
        \leq
        \kappa_2(L) \kappa_2(D) \kappa_2(L^H)
        \\&=
        \kappa_2(L)^2 \kappa_2(D)
        \leq
        4 \frac{\beta}{\alpha} \left( \frac{\|L\|_F^2}{n} \right)^n
        ,
    \end{align*}
    because $\kappa_2(A B) \leq \kappa_2(A) \kappa_2(B)$ and $\kappa_2(A) = \kappa_2(A^H)$ for every invertible matrices $A, B \in \C^{n \times n}$. 
    \flushright\qed
\end{proof}
 
{\small

\bibliographystyle{spmpsci}
\bibliography{article_expanded}
}
\end{document}